\def\bl{\begin{lemma}}
\def\el{\end{lemma}}
\def\bth{\begin{theorem}}
\def\eth{\end{theorem}}
\def\bc{\begin{corollary}}
\def\ec{\end{corollary}}
\def\bcj{\begin{conjecture}}
\def\ecj{\end{conjecture}}
\def\bpr{\begin{proposition}}
\def\epr{\end{proposition}}
\def\bde{\begin{definition}}
\def\ede{\end{definition}}
\def\E{\mathbb{E}}
\newcommand{\be}{\begin{eqnarray}}
\newcommand{\ee}{\end{eqnarray}}
\newcommand{\eps}{{\mbox{$\epsilon$}}}
\newcommand{\R}{{\mathbb R}}
\newcommand{\Z}{{\mathbb Z}}
\newcommand{\F}{{\mathcal F}}
\newcommand{\A}{{\mathcal A}}
\newcommand{\B}{{\mathcal B}}
\newcommand{\RR}{{\mathcal R}}
\newcommand{\EE}{{\mathcal E}}
\newcommand{\with}{\hbox{ {\rm with} }}
\renewcommand{\and}{\hbox{ {\rm and} }}
\newcommand{\off}{\hbox{ {\rm off} }}
\newcommand{\on}{\hbox{ {\rm only on} }}
\newcommand{\C}{{\mathcal{C}}}
\newcommand{\prob}{\mbox{\bf P}}
\newcommand{\Ss}{\mathcal{T}}
\newcommand{\lr}{\leftrightarrow}
\newcommand{\lrlong}{\longleftrightarrow}
\def\arrowfillCS#1#2#3#4{%
   \thickmuskip0mu\medmuskip\thickmuskip\thinmuskip\thickmuskip
   \relax#4#1\mkern-7mu%
   \cleaders\hbox{$#4\mkern-2mu#2\mkern-2mu$}\hfill
   \mkern-7mu#3
}
\def\leftrightarrowfillCS{\arrowfillCS\leftarrow\relbar\rightarrow\relax}
\newtheorem{theorem}{Theorem}
\newtheorem{definition}{Definition}[section]
\newtheorem{lemma}{Lemma}[section]
\newtheorem{claim}[lemma]{Claim}
\newtheorem{corollary}[lemma]{Corollary}
\newtheorem{proposition}[theorem]{Proposition}
\newtheorem{conjecture}[theorem]{Conjecture}
\newtheorem*{full}{Theorem \ref{mainthm}}
\theoremstyle{definition}
\numberwithin{equation}{section}
\DeclareMathOperator{\sign}{sign}
\gdef\SetFigFontNFSS#1#2#3#4#5{} 
\begin{document}
\title{Arm exponents in high~dimensional~percolation}
\author{Gady Kozma} \author{Asaf Nachmias}

\begin{abstract} We study the probability that the origin is connected to the sphere of radius $r$ (an {\em arm} event) in critical percolation in high dimensions, namely when the dimension $d$ is large enough or when $d>6$ and the lattice is sufficiently spread out. We prove that this
probability decays like $r^{-2}$. Furthermore, we show  that the probability of having $\ell$ disjoint arms to distance $r$ emanating from the vicinity of the origin is $r^{-2\ell}$.
\end{abstract}

\maketitle
%
\section{{\bf  Introduction}}

It is widely believed that there is no infinite component almost
surely in critical percolation on any $d$-dimensional lattice for
any $d > 1$. Proving this is considered one of the most challenging
problems in probability. This was proved for $d=2$ by Harris \cite{Harris} and Kesten \cite{K80} and in {\em high dimensions} by Hara and Slade \cite{HaS0}. By {\em high dimensions} we mean one of the two underlying graphs: (i) $\Z^d$ with $d \geq 19$ or, (ii) the graph with vertex set $\Z^d$ such that $x$ and $y$ are neighbors iff $|x-y|\leq L$ for sufficiently large $L$ and $d>6$ (see further definitions below).

Having no infinite component almost surely is equivalent to the assertion that the probability that the origin is connected by an open path to
$\partial Q_r$, the boundary of the cube $\{-r,\dots,r\}^d$ tends to
$0$ as $r\to \infty$. Physicists' lore (see for example \cite{AS}, page 31)
maintains that not only is there no infinite component for any $d
\geq 2$, but also that these probabilities decay according to some
power law in $r$, that is $\prob _{p_c} ( 0 \lr \partial Q_r  ) = r^{-1/\rho + o(1)}$ for some critical exponent $\rho >0$ which depends only on the dimension $d$, and not on the local structure of the lattice. In this paper we prove that $\rho=1/2$ in high dimensions.
\begin{theorem} \label{mainthm}
Consider critical percolation in high dimensions. Then we have
$$ \prob _{p_c} \big ( 0 \lr \partial Q_r \big ) \approx r^{-2} \, ,$$
\end{theorem}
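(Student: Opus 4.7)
The plan is to establish $\prob_{p_c}(0 \lr \partial Q_r) \approx r^{-2}$ as two matching one-sided bounds, attacked separately.

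\textbf{Lower bound via second moment.} I would apply Paley--Zygmund to
\[ X_r \;=\; \#\bigl\{\, x \in \Z^d :\, r \leq |x|_\infty \leq 2r,\; 0 \lr x\,\bigr\}, \]
noting that $\{X_r > 0\} \subseteq \{0 \lr \partial Q_r\}$. The sharp two-point function asymptotics $\prob_{p_c}(0 \lr x) \approx |x|^{-(d-2)}$ in high dimensions (Hara--van der Hofstad--Slade) yield $\E[X_r] \approx r^d \cdot r^{-(d-2)} = r^2$. For the second moment, the tree-graph / BK inequality supplies
\[ \prob(0 \lr x,\, 0 \lr y) \;\leq\; \sum_z \prob(0\lr z)\,\prob(z\lr x)\,\prob(z\lr y). \]
Summing over $x,y$ in the dyadic annulus and invoking the two-point function bound together with the triangle condition ($d>6$) should give $\E[X_r^2] \leq C r^6$; Paley--Zygmund then yields $\prob(X_r > 0) \geq c\, r^{-2}$.

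\textbf{Upper bound via intrinsic distance.} The strategy is to pass through the intrinsic (chemical) distance inside the cluster $\C(0)$. Prior work on the Alexander--Orbach conjecture in high dimensions supplies the intrinsic one-arm exponent: if $B_k$ denotes the intrinsic ball of radius $k$ around the origin, then $\prob(B_k \neq \C(0)) \approx 1/k$. Mean-field intuition predicts diffusive scaling of critical clusters --- extrinsic $\approx \sqrt{\mathrm{intrinsic}}$ --- so that reaching extrinsic distance $r$ should typically require intrinsic radius of order $r^2$. I would therefore split
\[ \prob(0 \lr \partial Q_r)
   \;\leq\; \prob\bigl(\text{intrinsic radius of } \C(0) \geq c r^2\bigr)
   \;+\; \prob\bigl(0 \lr \partial Q_r,\; \text{intrinsic radius} < c r^2\bigr). \]
The first term is $\leq C/r^2$ by the intrinsic one-arm estimate, so the entire upper bound reduces to showing that the second term is $O(r^{-2})$.

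\textbf{Main obstacle.} The lower bound is routine once the two-point function and the triangle condition are in hand. The genuine difficulty is the diffusive estimate underlying the upper bound: that an open path of intrinsic length $k$ rarely reaches extrinsic distance $\gg \sqrt{k}$. A union bound over endpoints $x \in \partial Q_r$ reduces this to a sharp control on the probability that a specific vertex $x$ with $|x|=r$ is joined to $0$ by an open path of length at most $k \ll r^2$. I expect the proof to go through a Gaussian-style upper bound on the \emph{intrinsic} two-point function, derived via the lace expansion and BK-based regularization of short open paths. Making this quantitative intrinsic-to-extrinsic coupling uniform in $r$ and in the endpoint $x$ is, in my view, where the bulk of the technical work will lie.
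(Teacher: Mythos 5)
Your lower bound is exactly the paper's argument: a second moment/Paley--Zygmund computation for the number of connected vertices in an annulus, with $\E X_r\approx r^2$ from the two-point function and $\E X_r^2\le Cr^6$ from the tree-graph inequality. That part is fine.

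The upper bound, however, has a genuine gap. You reduce everything to the claim that
\[
\prob\bigl(0\lr\partial Q_r,\ \text{intrinsic radius}<cr^2\bigr)=O(r^{-2}),
\]
i.e.\ a quantitative ``diffusive'' upper bound stating that open paths of intrinsic length $k$ essentially never reach extrinsic distance $\gg\sqrt{k}$, uniformly in $r$ and in the endpoint. You do not prove this; you only name it as the remaining obstacle and gesture at a hoped-for Gaussian bound on the length-restricted two-point function via the lace expansion. This is not a technical loose end --- it is the entire content of the theorem. The intrinsic one-arm exponent from the Alexander--Orbach work gives you the first term for free, but converting intrinsic to extrinsic scales at criticality is precisely the open problem the paper is solving, and no such Gaussian bound on $\prob(0\stackrel{\le k}{\lr}x)$ was available at the level of precision your union bound over $\partial Q_r$ would require. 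So the proposal, as written, is circular in effect: the ``main obstacle'' you defer is the theorem.

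The paper takes a different route that avoids the intrinsic metric entirely. It runs an induction on extrinsic scales for $\gamma(r)=\prob(0\lr\partial Q_r)$: either the cluster has volume $\ge\eps r^4$ (probability $\le C/(\sqrt\eps\, r^2)$ by the Barsky--Aizenman estimate $\prob(|\C(0)|>n)\approx n^{-1/2}$), or some sphere $\partial Q_j$ carries few connected boundary points (costing $\gamma(r)\gamma(\lambda r/2)$ by a regeneration argument), or every sphere carries many boundary points yet the volume is small --- and the heart of the paper (Theorem \ref{lowmass}, proved via a conditional second-moment argument supported by the regularity analysis of chapter \ref{sec:ldv}) shows this last event has probability at most $(1-c)\gamma(r)$. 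These three bounds close into the recursion of Lemma \ref{lem:recurs}, which yields $\gamma(r)\le Mr^{-2}$. In short: the paper trades your unproved intrinsic-to-extrinsic coupling for a volume-versus-boundary-growth dichotomy that can be controlled with the known mean-field estimates.
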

\noindent Here and below, $f(r) \approx g(r)$ means that for some
constant $C>0$ which might depend on the dimension $d$ and on the
specific lattice chosen, but not on $r$, we have $C^{-1} f(r) \leq
g(r) \leq C f(r)$ for all $r>0$.
A one-arm exponent was established in a few cases in the past.
\begin{itemize}
\item Kolmogorov \cite{Ko} studied critical Galton-Watson processes and showed that for
critical percolation on an infinite regular tree, $\rho=1$ (this
can be considered as the $d=\infty$ case).
\item In the breakthrough work of Lawler, Schramm and Werner \cite{LSW}, who relied on the
work of Smirnov \cite{Sm, BR}, it is shown that $\rho=48/5$ for the
triangular lattice in two dimensions.
\item Van der Hofstad, den Hollander and Slade show that $\rho=1$ in the setting of critical {\em oriented} spread-out percolation in dimension larger than $4$.
\end{itemize}



Even though most critical exponents for high dimensional percolation
are known, the value of $\rho$ has remained undetermined. A previous
attempt at calculating $\rho$ was made by Sakai \cite{S}. He proved a
conditional result implying that $\rho=1/2$, but unfortunately his
assumptions are not known to hold. One of his assumptions is that
$\rho$ is well defined --- an assertion we do not know how to prove without employing the full mechanism of this paper.

Rigorous results about critical percolation in high dimensions were obtained using the {\em lace expansion}, a perturbative
technique inspired by the non-rigorous renormalization group
methods used by physicists. We will liberally apply results achieved using the lace expansion, described below, but we do not use this technique directly.



\subsection{Critical percolation in high dimensions.} \label{critperc} For an infinite graph $G$ and $p \in [0,1]$ we write $\prob_p$ for the probability measure on subgraphs of $G$ obtained by independently retaining each edge with probability $p$ and deleting it with probability $1-p$. Edges retained are called {\em open} and edges deleted are called {\em closed}. The critical percolation probability $p_c$ is defined by
$$ \inf  \big \{ p \, : \, \prob_p ( \exists \textrm{ an infinite component} ) > 0 \big \} \, .$$

In this paper we consider critical percolation in high dimensions. By that we mean that $G=(V,E)$ is one of the following.
\begin{itemize}
\item The {\em nearest neighbor} model with $d \geq 19$, in which the
  vertex set $V=\Z^d$ and $E = \{ (x,y) : ||x-y||_1 = 1 \}$ or,
\item The {\em spread-out} model with $d>6$, in which $V=\Z^d$  and $E = \{ (x,y) : ||x-y||_1 \leq L \}$ for some sufficiently large $L>L_0(d)$.
\end{itemize}
Informally, in high dimensions the space available for the critical percolation cluster to expand is so large, that the interactions between different parts of the cluster become negligible, forming some independence between the different parts of the cluster. When the underlying graph is an infinite regular tree, this statement can be made completely formal. Indeed, the status of the edges descending from one branch of the root is independent of the status of the edges descending from another branch. Even though such strong independence does not hold in critical percolation on $\Z^d$, we still expect the same rough behavior when $d$ is large. One formal aspect of this heuristic, is that we expect that the critical exponents, which describe the ``shape'' of the clusters, attain the same value they do on an infinite regular tree.

A fundamental result in this spirit is due to Barsky and Aizenman
\cite{BA} and Hara and Slade \cite{HaS0}. It states that in high
dimensions we have
\be\label{deltais2} \prob_{p_c} ( |\C(0)| > n ) \approx \frac{1}
{\sqrt{n} } \, ,\ee where $\C(0)$ denotes the connected
cluster containing the origin. It is a classical fact \cite{AthNey} that the same statement holds for critical percolation on an infinite regular
tree. We remark that in \cite{HaS} the precise asymptotic behavior of $\prob_{p_c} ( |\C(0)| \geq n )$ in high dimensions was obtained.

This appearance of ``tree-like'' behavior once the dimension is large occurs in many models of statistical physics. The dimension this transition occurs at is sometimes called the {\em upper critical dimension}. It is believed that for critical percolation, the upper critical dimension is $6$. In particular, it is believed that (\ref{deltais2}) holds whenever $d>6$,
however, this was proved only for the spread-out model and proving this in the full generality is still open.

In this paper we use the estimate (\ref{deltais2}) to prove our main theorem. Note, however, that we cannot expect
Theorem \ref{mainthm} to hold assuming only (\ref{deltais2}) since in
an infinite regular tree we have that (\ref{deltais2}) holds but
$\rho=1$. At first, having $\rho=1/2$ in high dimensions may seem
contradictory to the tree-like behavior mentioned above, but in fact,
$\rho=1$ in a tree corresponds to $\rho'=1$ in high dimensions, where
$\rho'$ is the {\em intrinsic} metric one-arm exponent. See \cite{KN} for more details.

The second estimate that we use, derived by Hara \cite{Ha} (for the nearest-neigh\-bor model) and by Hara,
van der Hofstad and Slade \cite{HaHS} (for the spread-out model)
states that in high dimensions \be\label{tpt}\prob_{p_c}(0\lr
x)\approx |x|^{2-d} \, ,\ee
where $0 \lr x$ denotes the event that $0$ is connected to $x$ with an
open path. In fact, in \cite{BA} it is shown that this estimate
implies (\ref{deltais2}). We may now state a more exact version of our
result
\begin{full}[conditional version] Assume $(\Z^d,E)$ is a lattice
  satisfying
\begin{enumerate}
\item $d>6$;
\item The estimate (\ref{tpt}); and
\item The edge set $E$ is invariant under reflections and coordinate
permutations.
\end{enumerate}
Then
\[ \prob _{p_c} \big ( 0 \lr \partial Q_r \big ) \approx r^{-2} \, ,\]
\end{full}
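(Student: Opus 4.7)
Let $N_r := |\{x \in \partial Q_r : 0 \lr x\}|$, so that $\{0 \lr \partial Q_r\}=\{N_r\geq 1\}$. Writing $\tau(x):=\prob_{p_c}(0\lr x)$, hypothesis (ii) gives
\[\E N_r \;=\; \sum_{x \in \partial Q_r} \tau(x) \;\approx\; r^{d-1}\cdot r^{2-d} \;=\; r.\]
For the second moment, the Aizenman--Newman tree-graph inequality (a standard consequence of BK) says
\[\prob_{p_c}(0\lr x \and 0\lr y) \;\leq\; \sum_{z\in\Z^d} \tau(z)\,\tau(x-z)\,\tau(y-z),\]
and summing this over $x,y\in\partial Q_r$ using (ii) gives a triangle-diagram sum which (splitting into $|z|\leq r$ and $|z|>r$) is finite for $d>6$ and totals to at most $Cr^4$. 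Paley--Zygmund then yields
\[\prob_{p_c}(0 \lr \partial Q_r) \;\geq\; \frac{(\E N_r)^2}{\E N_r^2} \;\geq\; c\,r^{-2}.\]

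\textbf{Upper bound via cluster volume.} For the matching upper bound, I would work with the interior cluster volume $V_r := |\C(0)\cap Q_r|$ where $Q_r:=\{-r,\ldots,r\}^d$. A shell decomposition of (ii) gives
\[\E V_r \;=\; \sum_{x\in Q_r} \tau(x) \;\approx\; r^2.\]
The whole game reduces to proving the \emph{conditional} lower bound
\[\E\bigl[\,V_r \,\big|\, 0 \lr \partial Q_r\,\bigr] \;\geq\; c\,r^4. \qquad(*)\]
Indeed, $(*)$ together with $\E V_r \geq \prob_{p_c}(0\lr\partial Q_r)\cdot \E[V_r \mid 0\lr\partial Q_r]$ immediately delivers $\prob_{p_c}(0\lr\partial Q_r) \leq Cr^{-2}$, matching the lower bound.

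\textbf{Main obstacle.} The estimate $(*)$ is an ``incipient-infinite-cluster''-type bound and is the heart of the matter. The mean-field heuristic is that a cluster conditioned to reach extrinsic distance $r$ has a backbone of \emph{intrinsic} length $\approx r^2$ --- reflecting the diffusive scaling between intrinsic and extrinsic distance in high dimensions, which is exactly why $\rho=1/2$ here while $\rho'=1$ intrinsically --- together with subcritical-like sausages hanging off each backbone vertex that each contribute $\Theta(r^2)$ vertices (the characteristic cluster size at scale $r$), summing to $\Theta(r^4)$. Implementing this rigorously is the bulk of the proof. The approach I would attempt is: pick a canonical open path $\gamma$ realizing $\{0\lr\partial Q_r\}$ (say the lexicographically first self-avoiding one), use the symmetry hypothesis (iii) together with a Russo-type identity to force $|\gamma|\geq cr^2$ with positive conditional probability, and then use the BK inequality combined with (ii) to produce, at each backbone vertex, a lower bound of order $r^2$ on the expected truncated side-cluster. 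Ensuring that these side-clusters contribute essentially disjointly to $V_r$ --- rather than merely retracing the backbone itself or collapsing onto one another --- is where the triangle condition (which is a consequence of (ii) and $d>6$) will be indispensable, and I expect this disjointness control to be the main technical difficulty.
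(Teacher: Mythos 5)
Your lower bound is correct and is essentially the paper's own argument (Lemma \ref{lower} together with Lemma \ref{lem:xyz}): a second moment method on the number of connected points near $\partial Q_r$, with the tree-graph inequality and (\ref{tpt}) controlling the second moment. The choice of $\partial Q_r$ versus the annulus $Q_{2r}\setminus Q_r$ is immaterial.

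The upper bound is where the proposal has a genuine gap. The reduction to $(*)$ is logically valid, but your sketched route to $(*)$ rests on two steps that are each theorem-sized and unsupported. First, the claim that the backbone has intrinsic length $\ge cr^2$ with positive conditional probability is not something lattice symmetry plus a ``Russo-type identity'' will deliver: the diffusive relation between intrinsic and extrinsic distance at criticality is itself a deep fact (the intrinsic exponent is the subject of \cite{KN}), and translating between the two metrics is essentially equivalent to the theorem being proved. Hypothesis (iii) is used in the paper for an entirely different purpose, namely Lemma \ref{connect}, a quasi-polynomial lower bound on in-box connection probabilities that underpins the regularity machinery. Second, the conditioning and disjointness control that you defer to ``the triangle condition'' is exactly where the authors report that the triangle condition could not be made to work directly; replacing it with the regularity analysis (Theorem \ref{globalexploreshafan}, built on Lemma \ref{connect}) is the bulk of the paper. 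There is also a structural obstruction to proving $(*)$ head-on: conditioned on $\{0\lr\partial Q_r\}$ alone, nothing rules out a thin tendril with $X_j=O(1)$ points on intermediate spheres, in which case the conditional volume is far below $r^4$; excluding this requires comparing $\E X_j$ with $\gamma(j)$, i.e.\ already knowing the answer. The paper escapes this circularity with an induction on scales (Lemma \ref{lem:recurs}): the event $\{0\lr\partial Q_{r(1+\lambda)}\}$ is split into (a) a large cluster, killed by the Barsky--Aizenman tail (\ref{deltais2}); (b) some sphere with $X_j\le L^2$, killed by a regeneration argument costing $L^2\gamma(r)\gamma(\lambda r/2)$; and (c) all spheres thick but small volume, killed by the conditional second-moment estimate of Theorem \ref{lowmass}. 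Your proposal contains no recursion in $r$ and no mechanism for case (b), so even granting the backbone-and-sausage heuristics it does not close.
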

\noindent We explain why (iii) is needed in \S
\ref{subsec:connprob}.

\subsection{Outline of the proof}\label{sec:outline}
We use an induction scheme, not unlike the one used in \cite[\S
3.2]{KN} for calculating the {\em intrinsic} one-arm exponent. Let us
describe it in the roughest possible terms. Define
$\gamma(r)=\prob(0\lr \partial Q_r)$. Assume that the event $0\lr
\partial Q_{3r}$ occurred. Then one the the following must have
happened.
\begin{enumerate}
\item The cluster might have been not too small, that is,
  $|\C(0)|\ge \frac{1}{100}r^4$. By (\ref{deltais2}) this probability is at most $c/r^2$.
\item For every $j\in[r,2r]$, define
\begin{equation}\label{eq:defXj}
X_j=|\{x\in\partial Q_j:0\stackrel{Q_j}{\lrlong}x\}|\, ,
\end{equation}
 where by $0\stackrel{Q_j}{\lrlong}x$ we mean that $0$ is connected to $x$ with an open path which resides in $Q_j$.
  The second possibility is that for some $j\in[r,2r]$ we have that
  $X_j\leq r^2$. For this to happen we must have that $0$ is connected
  to $\partial Q_j$, which occurs with probability at most
  $\gamma(r)$, and then at least one $x \in \partial Q_j$ with
  $0\stackrel{Q_j}{\lrlong}x$ must be connected to $\partial Q_{3r}$,
  which costs us another $\gamma(r)$. Thus, the probability of this
  event is at most $r^2\gamma(r)^2$.
\item\label{enu:prob} The remaining case is that $X_j\ge r^2$ for
  all $j\in[r,2r]$ and $|C_0|\le \frac{1}{100}r^4$. Heuristically, if
  $X_j\ge r^2$ for all $j\in[r,2r]$ then
  we expect $|\C(0)|$ to be of size at least $r^4$. So the probability that
  $|\C(0)|$ is at most $ \frac{1}{100}r^4$ should be small, say at
  most $\frac{1}{20}$. Remembering that we also need for $0$ to be
  connected to $\partial Q_r$ we get that the probability of this
  possibility is at most $\frac{1}{20}\gamma(r)$.
\end{enumerate}
All this gives the heuristic relation
\[
\gamma(3r)\le \frac{c}{r^2}+r^2\gamma(r)^2+\tfrac{1}{20}\gamma(r) \, ,
\]
from which it is possible to prove inductively that
$\gamma(r)<C/r^2$. This is indeed the case, though we left out from
this simplified sketch several additional parameters required for the
induction to work. See the details in chapter \ref{sec:mainproof} below, starting with Lemma \ref{lem:recurs}.


The estimate of (iii) is the hardest part. Let us therefore state it as a separate result. For this we need to introduce the following random variable. For $j\in [r,2r]$ and an integer $L\in[0,r]$ we define
\be\label{eq:defAj}
A_j = \left | \big \{ y \in Q_{j+L} \setminus Q_j : 0 \lr y
\big \} \right | \, .
\ee
Recall also the definition of $X_j$ at (\ref{eq:defXj}).
\begin{theorem} \label{lowmass} There exists a constant $c>0$ such that for any $j$ sufficiently large, and any $L \geq j^{1/10}$ we have
$$ \prob_{p_c} \big ( X_j \geq L^2 \and A_j \leq cL^4 \big ) \leq (1-c) \prob_{p_c} \big (0 \lr \partial Q_j \big ) \, .$$
\end{theorem}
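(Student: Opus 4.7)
My plan is to condition on the part of the cluster inside $Q_j$ and apply a second moment argument to $A_j$ with respect to the (independent) outside percolation.

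\textbf{Reduction via last-exit conditioning.} First, reveal every edge of the percolation with both endpoints in $Q_j$; this determines the cluster $\C_j^{\in}$ of $0$ using only inside edges and hence the set $S:=\C_j^{\in}\cap\partial Q_j$, so that $X_j=|S|$ tautologically. A last-exit decomposition (for any path from $0$ to a vertex $y$ in the annulus, the last vertex of the path lying in $Q_j$ lies in $\partial Q_j$ and is connected to $0$ inside $Q_j$) shows
\[
A_j \;=\; \bigl|\C^{\out}(S) \cap (Q_{j+L}\setminus Q_j)\bigr|,
\]
where $\C^{\out}(S)$ is the cluster of $S$ in the percolation restricted to edges with at least one endpoint outside $Q_j$. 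These outside edges are independent of the inside, so it suffices to prove that there is $c>0$ such that, for every deterministic $S\subseteq\partial Q_j$ with $|S|\geq L^2$,
\[
\prob\bigl(A_j \geq cL^4 \,\big|\, \C_j^{\in}\cap\partial Q_j=S\bigr) \;\geq\; c.
\]
Summing over such $S$ gives the stated bound, since $\{X_j\geq L^2\}\subseteq\{0\lr\partial Q_j\}$.

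\textbf{Paley--Zygmund on $A_j$.} With $S$ fixed, write $A_j=\sum_{y}\mathbf{1}[y\in\C^{\out}(S)]$ and carry out the first and second moment estimates using only the outside percolation. For the first moment I would use Bonferroni,
\[
\prob(y\in\C^{\out}(S)) \;\geq\; \sum_{x\in S}\prob(x\lrr{\out}y) \;-\; \sum_{x\neq x'\in S}\prob\bigl(x\lrr{\out}y,\ x'\lrr{\out}y\bigr),
\]
together with the lower bound $\prob(x\lrr{\out}y)\gtrsim|x-y|^{2-d}$ for $x\in\partial Q_j$ and $y$ in the annulus within distance $L$ of $x$ (a half-space analogue of (\ref{tpt}), valid up to constants in high dimensions), and the computation $\sum_{y}|x-y|^{2-d}\asymp L^2$ over the nearby annulus. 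This gives a main term $\gtrsim|S|\cdot L^2\geq L^4$, while the Bonferroni correction is controlled by the tree-graph (BK) inequality $\prob(x\lr y,\ x'\lr y)\leq\sum_z G(x,z)G(z,y)G(z,x')$, whose sum is $o(L^4)$ once $d>6$. The second moment $\E[A_j^2\mid S]=\sum_{y_1,y_2}\prob(y_1,y_2\in\C^{\out}(S))$ is handled analogously via four-point tree-graph diagrams, yielding $\E[A_j^2\mid S]\lesssim\E[A_j\mid S]^2$; Paley--Zygmund then concludes.

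\textbf{Main obstacle.} The technical heart is the second-moment (and Bonferroni-error) estimate, which must hold \emph{uniformly} over all $S$ with $|S|\geq L^2$, including pathological configurations in which $S$ is concentrated on a small subset of $\partial Q_j$ and the tree-graph diagrams a priori blow up. The saving comes from the multi-convolution estimates of $|x|^{2-d}$ (triangle and open-triangle diagrams), which converge exactly because $d>6$, and from the fact that, in the triangle diagrams, the common sum variable $z$ is constrained to lie near the annulus, producing factors of $L^2$ rather than $j^2$. The assumption $L\geq j^{1/10}$ enters at this point, to absorb polynomial correction factors from boundary effects in $Q_j$ into the main $L^4$ term.
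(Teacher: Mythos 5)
Your high-level strategy (condition on the configuration inside $Q_j$, then run a second-moment argument on the number of cluster points in the annulus) is indeed the skeleton of the paper's proof, but two essential steps fail as written. First, your reduction to the percolation on edges with an endpoint outside $Q_j$ destroys the first moment. Once the connecting paths from $S\subset\partial Q_j$ to $y$ are confined to the exterior of $Q_j$, the relevant quantity is a half-space two-point function with the source $x$ \emph{on} the boundary; no lower bound of the form $c|x-y|^{2-d}$ is available for this (it does not follow from (\ref{tpt}) or anything in the cited literature), and the random-walk analogy predicts an extra factor of order $\dist(y,\partial Q_j)/|x-y|^{2}$, so that $\sum_{y}\prob(x\lr y\hbox{ outside }Q_j)$ is of order $L$, not $L^{2}$, and the main term becomes $|S|\cdot L\ge L^{3}$ rather than $L^{4}$. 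The paper avoids this entirely: in Lemma \ref{sume1e2} one conditions only on the cluster $\C(x;Q_j)$ and asks for a connection from a point $x'$ just outside $Q_j$ to $y$ that avoids that particular cluster; such paths may re-enter $Q_j$ freely, so the unrestricted estimate (\ref{tpt}) gives the full $L^{2}$, and the cost of avoiding the conditioned cluster appears as a correction $\sum_{z\in\C(x;Q_j)}|z-x'|^{2-d}|z-y|^{2-d}$ to be subtracted.

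Second, once that correction term (or your Bonferroni/second-moment error terms) involves a sum over the \emph{conditioned, random} set --- the cluster $\C(x;Q_j)$, not all of $\Z^{d}$ --- convergence of triangle diagrams for $d>6$ is of no help: a realization of the cluster that is atypically dense near $x$ (say $|\C(x;Q_j)\cap(x+Q_s)|\gg s^{4}$ for some $s$) makes $\sum_{z\in\C(x;Q_j)}|z-x'|^{2-d}$ blow up, and such realizations have non-negligible conditional probability under $\{X_j\ge L^{2}\}$. You name this obstacle but propose no mechanism to overcome it, and there is none at the level of deterministic diagrammatic bounds holding uniformly over all admissible inside configurations. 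This is exactly what the regularity apparatus of chapter 4 is for: a boundary vertex is declared $K$-regular only if the conditioned cluster satisfies dyadic density bounds $|A_t|<2^{4(t+1)}(t+1)^{7}$ in every shell, Theorem \ref{globalexploreshafan} (proved via an exploration process and Azuma's inequality) shows that at most half of the $X_j$ boundary vertices are irregular except on an event of probability $Cj^{d}e^{-c\log^{2}L}$, and the moment computation in Lemmas \ref{sume1e2}--\ref{sume1e2e3} is carried out only over regular vertices, where the interaction term is $O(K^{6-d}\log^{7}K)\cdot L^{2}$ per vertex and can be beaten by choosing $K$ large. Without a substitute for this step the argument does not close. (A minor further point: your ``last-exit'' identity is only the inequality $A_j\ge|\C^{\mathrm{out}}(S)\cap(Q_{j+L}\setminus Q_j)|$, since a path from $0$ may leave and re-enter $Q_j$ before its last exit; this is harmless for a lower bound.)
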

\noindent The exponent $1/10$ in the condition $L\ge j^{1/10}$ is immaterial and can be replaced with any positive number, however, this is unimportant since we apply this theorem with $L$ quite close to $j$.

Let us sketch the proof of Theorem \ref{lowmass}. We condition on
$X_j$ and then show using a second-moment method that
$\prob_{p_c}(A_j>cL^4\,|\,X_j)>c$. The main difficulty in the approach is the
lower bound of the conditional first moment. Heuristically, each $x\in\partial
Q_j$ ``branches out'' to $L^2$ vertices on average, so we should
have $\E(A_j\,|\,X_j>L^2)\ge L^4$, as long as
\begin{enumerate}
\item The conditioning on $X_j$ does not alter significantly the
  behavior of one $x$; and
\item the different branches coming out of every $x$ do not intersect
  too much.
\end{enumerate} A natural approach
to showing a claim of this sort would have been using the
{\em triangle condition}. See \cite{AN,BA,N87,S,KN} for details about the triangle condition and its applications. We could not make the triangle condition work directly, so we replaced it with a {\em regularity}
analysis, which is similar in spirit, even if very different in
detail. Let us expand on this topic.


\newcommand{\byebye}
{
We are left with the problem of justifying the
independence-from-the-past heuristic. Usually one uses the triangle
condition,
\[
\Delta=\sum_{x,y\in \Z^d}\prob(0\lr x)\prob(0\lr y)\prob(x\lr y)<\infty\,.
\]
Notice that when $d>6$, the triangle condition follows from the
two-point function estimate (\ref{tpt}). To see examples of how to use
$\Delta <\infty$, check \cite{AN,BA,S,KN} ***any more?***. Here we
could not make it work. We are not sure why. Thus the main novelty in
the proof is the workaround for the problems we had with applying
$\Delta<\infty$. We believe that the intermediate steps might be
useful in other situations (in fact, we devised them for a different
problem originally). To describe them we need some definitions.
Given integers
$r>s>0$, and a vertex $x\in \partial Q_r$ we define
$$
Y_{s,r}(x) =
  \Big | \Big \{ u \in x+Q_s : x \stackrel{Q_r}{\lrlong} u \Big \} \Big | \,
,$$


\begin{theorem} \label{shafan} There exists a constant $c>0$ such that for any integers $r>s>0$
and any $w \in Q_r$ and $x \in \partial Q_r$ we have that
$$ \prob \big ( Y_{s,r}(x) \geq s^4 \log^{5} s \,|\, w
  \stackrel{Q_r}{\lrlong} x \big ) \leq 2 e^{-c\log^3 s} \, , $$

\end{theorem}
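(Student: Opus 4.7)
The plan is a high moment method: bound the $k$-th moment of $Y_{s,r}(x)$ conditional on $w \stackrel{Q_r}{\lrlong} x$ via the Aizenman--Newman tree-graph inequality, then apply Markov's inequality at the optimized order $k \asymp \log^2 s$. The form $e^{-c \log^3 s}$ arises because this choice of $k$, multiplied by the logarithmic slack in the threshold $s^4 \log^5 s$, produces $\log^3 s$ in the final exponent.

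For the conditional moment bound, expand
\[
\E\!\left[\, Y_{s,r}(x)^k \cdot \mathbf{1}_{\{w \stackrel{Q_r}{\lrlong} x\}} \,\right]
= \sum_{u_1, \ldots, u_k \in x + Q_s}
\prob\!\left( w \stackrel{Q_r}{\lrlong} x,\ x \stackrel{Q_r}{\lrlong} u_i\ \forall\, i \right).
\]
Dropping the $Q_r$ restriction (which only increases each summand) and applying the tree-graph inequality, each summand is bounded by a sum over labelled spanning trees on $\{w, x, u_1, \ldots, u_k\}$, with each edge weighted by the two-point function $\tau$. By (\ref{tpt}), the sum $\sum_{u \in x + Q_s} \tau(a - u) \leq C s^2$ is uniform in the base vertex $a$, so iteratively summing out the $u_i$'s as leaves of the tree contributes at most $(Cs^2)^k$ per tree; the $w$-to-$x$ path inside the tree contributes an additional factor $\tau(w, x)$, after handling intermediate convolutions using standard high-dimensional estimates. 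Cayley's formula gives at most $(k+2)^k \leq (C_0 k)^k$ trees, hence
\[
\E\!\left[\, Y_{s,r}(x)^k \cdot \mathbf{1}_{\{w \stackrel{Q_r}{\lrlong} x\}} \,\right] \leq \tau(w, x) \cdot (C_1 k s^2)^k.
\]
Combined with the standard comparison $\prob(w \stackrel{Q_r}{\lrlong} x) \geq c\, \tau(w, x)$ --- which follows in the high-dimensional setting by FKG-gluing to the event that the cluster joining $w$ and $x$ stays near the segment $[w, x] \subset Q_r$ --- this yields
\[
\E\!\left[\, Y_{s,r}(x)^k \,\big|\, w \stackrel{Q_r}{\lrlong} x \,\right] \leq (C_2 k s^2)^k.
\]

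Markov's inequality then gives, for every positive integer $k$,
\[
\prob\!\left( Y_{s,r}(x) \geq M \,\big|\, w \stackrel{Q_r}{\lrlong} x \right)
\leq \left( \frac{C_2\, k\, s^2}{M} \right)^{k}.
\]
Setting $M = s^4 \log^5 s$ and $k = \lceil \log^2 s \rceil$ makes the ratio in parenthesis equal to $C_2 /(s^2 \log^3 s)$; raising to the power $\log^2 s$ gives $\exp\!\left( -2 \log^3 s + O(\log^2 s \cdot \log \log s) \right)$, which is at most $2\, e^{-c \log^3 s}$ for some $c > 0$ and all sufficiently large $s$.

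The hardest step is the tree-graph moment estimate, specifically the uniform extraction of a single $\tau(w, x)$ factor across all tree topologies on $\{w, x, u_1, \dots, u_k\}$: in general the unique $w$-to-$x$ path inside the tree interleaves with the branches reaching the $u_i$'s, and summing out the intermediate $u_j$'s requires bounding repeated convolutions of the two-point function restricted to $x + Q_s$ --- a computation relying on the triangle condition together with the detailed geometry of the box. A secondary technicality is the comparison between the restricted and unrestricted two-point functions, needed to convert the absolute moment bound into the conditional statement that appears in the theorem.
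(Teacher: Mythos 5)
Your strategy (a conditional high-moment bound plus Markov at order $k\asymp\log^{2}s$) is in the right spirit for the \emph{unconditional} part of the estimate --- Lemma \ref{volld} in the paper is exactly such a moment bound --- but the two steps you yourself flag as the hardest are genuine gaps, and as stated they fail. (i) In the tree-graph step, after summing out the $u_i$ as leaves you are left with a tree on $\{w,x\}$ together with the $k$ internal branch vertices, and those must be summed over all of $\Z^{d}$. In the worst topology this is the $(k+1)$-fold convolution of the two-point function evaluated at $w-x$; with $\tau(y)\approx|y|^{2-d}$ each convolution degrades the decay by $|y|^{2}$, so the $m$-fold convolution behaves like $|w-x|^{2m-d}$, which is not $\le C^{m}\tau(w-x)$ and for $m>d/2$ does not decay at all. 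The triangle condition controls a three-fold convolution at the origin, not long convolution chains, so no ``standard high-dimensional estimate'' extracts a single factor $\tau(w,x)$ uniformly over tree shapes. (ii) The comparison $\prob(w\stackrel{Q_r}{\lrlong}x)\ge c\,\tau(w,x)$ is not available and is expected to be false: since $x\in\partial Q_r$ there is a genuine boundary effect, Lemma \ref{connect} gives only the lower bound $ce^{-C\log^{2}r}$, and the authors remark that the truth near a corner should be of order $r^{2-2d}$, far below $|w-x|^{2-d}$; an FKG ``tube'' argument cannot close this, as confining the cluster to a tube is itself very costly. Even inserting the available $e^{-C\log^{2}r}$ turns your final bound into $e^{C\log^{2}r-c\log^{3}s}$, which is vacuous when $r\gg s$ --- and the theorem couples $r$ and $s$ in no way.

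The paper's argument avoids both issues by localizing \emph{before} conditioning: the event $\{Y_{s,r}(x)\ge s^{4}\log^{5}s\}$ is covered by events measurable inside $x+Q_{s^{2d+1}}$ (either some cluster restricted to that box is too large, handled unconditionally by the moment bound of Lemma \ref{volld}, or there are $\log^{2}s$ disjoint crossings of the annulus, handled by (\ref{deltais2}) and BK), each of probability at most $e^{-c\log^{3}s}$. Because these events are independent of the configuration outside $x+Q_{s^{2d+1}}$, conditioning on $w\stackrel{Q_r}{\lrlong}x$ costs only the \emph{local} connection probability from Corollary \ref{twosquares}, a factor $e^{C\log^{2}s}$ depending on $s$ alone, which is absorbed into $e^{-c\log^{3}s}$. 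That decoupling by independence across the boundary of a box of scale polynomial in $s$ --- rather than a ratio of global conditional moments --- is the missing idea.
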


%

The following definitions play a crucial role. For $x \in
\partial Q_r$ we write $\C_{Q_r}(x)$ for the set of vertices $z$ in
$Q_r$ such that $z \stackrel{Q_r}{\lrlong} x$. We say that $x$ is
$(s,r)$-bad if
$$ \prob \Big ( Y_{s,r}(x) \geq s^4 \log^5 s \, \mid \, \C_{Q_r}(x)
\Big ) \geq e^{-\log^2 s} \, ,$$ and note that we slightly abuse
the notation since this is a property of $\C_{Q_r}(x)$ rather than
$x$'s. We say that $x$ is {\bf $K,r$-bad} if there exists $s \geq
K$ such that $x$ is $(s,r)$-bad. For our final theorem we define
\be\label{xr} X_r = \big | \big \{ x \in \partial Q_r \, : \, 0
\stackrel{Q_r}{\lrlong} x \big \} \big \} \, ,\ee and
\be\label{xrsbad} X_r^{K\textrm{-bad}} = \big | \big \{ x \in
\partial Q_r \, : \, 0 \stackrel{Q_r}{\lrlong} x \and x \hbox{ is
$K,r$\textrm{-bad}} \big \} \big \} \, .\ee

\begin{theorem} \label{exploreshafan} There exists absolute constants $C>c>0$ such that for any integers $r>K>0$, any integer $M$ and any $\lambda\in(Ce^{-c\log^{4}K},1)$ we have
$$ \prob \Big ( X_{r} > M \and X_r^{K\textrm{-bad}} >\lambda X_{r} \Big ) \leq C \exp \left (-c\lambda^{2}e^{-C\log^{4}K}M\right) \, .$$
\end{theorem}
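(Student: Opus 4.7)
\medskip

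\noindent\textbf{Proof plan.}
The plan is to establish a single-vertex bound on the probability of badness, then apply it along a sequential exploration of $\C_{Q_r}(0)$ and conclude via an exponential concentration inequality.

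First, I would establish the single-vertex bound. By definition, $x\in\partial Q_r$ being $(s,r)$-bad means $\prob(Y_{s,r}(x) \ge s^4 \log^5 s \mid \C_{Q_r}(x)) \ge e^{-\log^2 s}$, so Markov's inequality combined with Theorem~\ref{shafan} gives, for every $w\in Q_r$ and $x\in\partial Q_r$,
$$ \prob\bigl(x\text{ is }(s,r)\textrm{-bad} \bigm| w \stackrel{Q_r}{\lrlong} x\bigr) \le e^{\log^2 s}\cdot 2 e^{-c\log^3 s}. $$
A dyadic union bound over $s\ge K$ produces the single-vertex estimate
$$ \prob\bigl(x\text{ is }K,r\textrm{-bad} \bigm| w \stackrel{Q_r}{\lrlong} x\bigr) \le p_K := C e^{-c\log^4 K}, $$
uniformly in $w$ and $x$.

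Next, I would set up a sequential exploration. Fix an ordering $x_1,\dots,x_N$ of $\partial Q_r$ and explore $\C_{Q_r}(0)$ edge by edge, revealing at stage $i$ just enough edges to decide whether $x_i$ is connected to $0$ in $Q_r$ and, if so, through which neighbor $w_i$ it is first reached. Let $\mathcal{F}_i$ be the resulting filtration, $\eta_i = \mathbf{1}\{0\stackrel{Q_r}{\lrlong}x_i\}$, and $\zeta_i = \mathbf{1}\{x_i \text{ is } K,r\textrm{-bad}\}$, so that $X_r = \sum_i \eta_i$ and $X_r^{K\textrm{-bad}} = \sum_i \eta_i\zeta_i$. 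By the percolation Markov property, conditionally on $\mathcal{F}_{i-1}$ the still-unrevealed edges are i.i.d.\ Bernoulli$(p_c)$, so the single-vertex bound applied with witness $w=w_i$ yields
$$ \prob(\zeta_i=1 \mid \mathcal{F}_{i-1},\eta_i=1) \le p_K. $$
Hence, conditionally on $X_r = m$, the sum $X_r^{K\textrm{-bad}}$ is stochastically dominated by a Binomial$(m,p_K)$ random variable. A Chernoff/Freedman-type inequality, summed over $m\ge M$ against the distribution of $X_r$, then gives the desired bound
$$ \prob\bigl(X_r^{K\textrm{-bad}} \ge \lambda X_r,\; X_r\ge M\bigr) \le C\exp\bigl(-c\lambda^2 e^{-C\log^4 K} M\bigr). $$

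The main obstacle is verifying $\prob(\zeta_i=1\mid \mathcal{F}_{i-1},\eta_i=1)\le p_K$: since $\zeta_i$ depends on all of $\C_{Q_r}(x_i)=\C_{Q_r}(0)$, including edges outside the explored region at stage $i$, the exploration must be ordered so that at the moment $x_i$ is declared connected the edges in $x_i+Q_s$ governing $Y_{s,r}(x_i)$ have not yet been queried. Once this is achieved, the witness path $0\to\dots\to w_i\to x_i$ plays the role of $w\stackrel{Q_r}{\lrlong} x_i$ in the single-vertex bound, and the remaining i.i.d.\ edges realize the conditional distribution of $Y_{s,r}(x_i)$ required by Theorem~\ref{shafan}.
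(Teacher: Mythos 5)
Your first step (the single-vertex bound via Markov applied to the conditional probability, combined with Theorem~\ref{shafan}) is sound in spirit and is the analogue of what the paper proves; note though that Theorem~\ref{shafan} decays like $e^{-c\log^{3}s}$, so after multiplying by $e^{\log^{2}s}$ and summing over $s\ge K$ you get $p_K\le Ce^{-c\log^{3}K}$, not $Ce^{-c\log^{4}K}$. The genuine gap is exactly the point you flag as ``the main obstacle'' and then assume away: there is no ordering of a vertex-by-vertex exploration for which the badness indicators $\zeta_i$ are adapted. The event that $x_i$ is $K,r$-bad is a deterministic function of the \emph{entire} cluster $\C_{Q_r}(x_i)=\C_{Q_r}(0)$ (it is a conditional probability given that cluster, ranging over all scales $s\ge K$ up to order $r$), so to determine $\zeta_1$ you must already reveal essentially everything; after that, $\zeta_2,\zeta_3,\dots$ are determined and the conditional probabilities $\prob(\zeta_i=1\mid\F_{i-1})$ are $0$ or $1$, so no Freedman/Chernoff concentration is available from this filtration. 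Equally, the claimed stochastic domination of $X_r^{K\textrm{-bad}}$ given $X_r=m$ by a Binomial$(m,p_K)$ is false: the badness events for nearby boundary vertices are determined by overlapping sets of edges and are strongly positively correlated (one dense blob of the cluster makes many $x_i$ bad simultaneously). What survives of your argument is only the first-moment bound $\E X_r^{K\textrm{-bad}}\le p_K\,\E X_r$, which via Markov gives nothing like $\exp(-c\lambda^{2}e^{-C\log^{4}K}M)$.

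The paper's route (in its published form, Theorems~\ref{globalexploreshafan} and \ref{localexploreshafan}) is built precisely to circumvent this: it replaces global badness by a \emph{local} badness determined by the edges in a box of side $s^{4d^2}$ around $x$, proves that local goodness implies goodness (Claims~\ref{disjointpaths} and \ref{localglobal}), and then explores the cluster \emph{box by box} on a shifted grid so that the badness of each box \emph{is} adapted to the exploration filtration. Two Azuma--Hoeffding martingale estimates are then needed per scale $s$: one (Lemma~\ref{boundaryverts}, resting on the connection lower bound of Lemma~\ref{connect}/Corollary~\ref{twosquares}) showing that the number of explored boxes $\tau$ cannot greatly exceed $e^{C\log^{2}s}X_r$, and one (Lemma~\ref{badverts}) showing the number of bad boxes cannot greatly exceed $e^{-c\log^{4}s}\tau$; finally one takes a union bound over the scales $s\ge K$ and over the $2^d$ grid shifts. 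None of these ingredients is optional, and the localization and box-exploration in particular are what make the concentration argument legitimate.
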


}



\newcommand{\getthisoutahere}
{
One should think about theorems \ref{shafan}
and \ref{exploreshafan} as a kind of ``$\Delta<\infty$
replacement for inelegant situations''. We believe it is instructive
to compare compare the proof of Theorem
\ref{pointsonsphere} in this paper to Theorem 1.2(i) in
\cite[\S3.1]{KN}. That theorem claims as follows. Denote
\[
G^\mathrm{eleg}(r)=\E|\{x:0\stackrel{r}{\lr} x\}|
\]
where $\stackrel{r}{\lr}$ means that the two vertices are connected by
an open path of length $\le r$.
Then $G^\mathrm{eleg}\le cr$. Very roughly, the proof applies the
Aizenman-Newman ``off technique'' to get the following inequality
\begin{multline*}
G^\mathrm{eleg}(2r) \ge \frac{c}{r}\bigg(G^\mathrm{eleg}(r)^2 -\\
  \sum_{\substack{x,y,u,v\\ |v-x|>100}}\prob(0\stackrel{r}{\lr}u)
    \prob(u\lr x)\prob(u\lr v)\prob(x\lr v)\prob(v\stackrel{r}{\lr}y)
\bigg).
\end{multline*}
In vague terms we may describe the sum as the ``interaction factor''
bounding the effect of the ``past'' on the ``future''. We want to
bound this factor. We first sum over $y$ and get a factor of
$G^\mathrm{eleg}(r)$. We then sum over $v$ and $x$ and get, if only
$100$ is sufficiently large, something less than
$\frac{1}{2}$. Summing over $u$ then gives another factor of
$G^\mathrm{eleg}(r)$ and we see that the interaction factor is indeed
negligible, we get $G^\mathrm{eleg}(2r)\ge (c/r)G^\mathrm{eleg}(r)^2$
from which we can conclude the claim. See \cite{KN} for all the
details.

Let us try the same approach for the problem here. Define
\[
G^\mathrm{ineleg}(r)=\E|\{x\in\partial
Q_r:0\stackrel{Q_r}{\lrlong}x\}|
\]
and repeat the same approach. We reach a very similar inequality,
\begin{multline*}
G^\mathrm{ineleg}(2r) \ge c\bigg(G^\mathrm{ineleg}(r)^2 -\\
\sum_{\substack{x\in \partial Q_r, y\in \partial Q_r(x), u\in Q_r\\
    v\in Q_r(x), |v-x|>100}}
\prob(0\stackrel{Q_r}{\lrlong}u)\prob(u\stackrel{Q_r}{\lrlong}x)
\prob(u\lr v)\prob(x\lr v)\prob(v\stackrel{Q_r(x)}{\lrlong}y)
\bigg).
\end{multline*}
Now, when we try to sum the interaction we term we face a problem: $v$
is not on the boundary of $Q_r$ but inside it. Hence the sum needs to
be done differently, and here is where Theorem \ref{shafan} comes into
the play. We use it to sum over the $v$ while keeping $x$ fixed. The
full details are found below. Here we conclude by suggesting to the
readers to keep in mind the following receipe: suppose you know of a
problem similar to your own which has a proof using only
$\Delta<\infty$ (``an elegenat proof''). Try to apply the elegant
approach to your problem. If you fail because of
issues of symmetry and order of summation, apply Theorem \ref{shafan}
or \ref{exploreshafan} to force the sum to converge in the needed order.
}




\subsection{Connection probability and cluster regularity}\label{subsec:connprob}

A key element in the proof, is a lower bound on the
connection probabilities. Let us state it formally.
\begin{lemma}\label{connect}
Let $\mathbb{Z}^{d}$ be a lattice in $\mathbb{R}^{d}$ such that
the edge set $E(\mathbb{Z}^{d})$ is symmetric with respect to coordinate
permutations and reflections. Then there exist constants $C>0$ and $c>0$ such that for any $z\in\partial Q_{r}$ we
have\[ \prob_{p_c}(0 \stackrel{Q_r}{\lrlong} z)\geq
ce^{-C\log^{2}r}.\]
\end{lemma}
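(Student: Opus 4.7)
My approach is a dyadic induction on the scale. Define
\[
f(r) := \inf_{z\in\partial Q_r}\prob_{p_c}\!\bigl(0\stackrel{Q_r}{\lrlong} z\bigr),
\]
where by the symmetry assumption (iii) the infimum need only be taken over a finite list of canonical orbit representatives, independent of $r$. The goal is to establish the recursion
\[
f(2r) \;\geq\; c\, r^{-C_0}\, f(r) \qquad (r\ge r_0),
\]
for fixed constants $c, C_0, r_0>0$. A constant base case at $r=r_0$ follows from the positivity of $p_c$ (any specific finite path is open with positive probability), and iterating the recursion gives $\log(1/f(2^n))\le O(n^2)$, i.e., $f(r)\ge c\,e^{-C\log^2 r}$, as required.

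For the recursion, fix $z\in\partial Q_{2r}$ in canonical position and let $y\in\partial Q_r$ be the midpoint of the segment $\overline{0z}$. The events $\{0\stackrel{Q_r}{\lrlong} y\}$ and $\{y\stackrel{y+Q_r}{\lrlong} z\}$ are both increasing, so by FKG
\[
\prob(0\stackrel{Q_{2r}}{\lrlong} z) \;\ge\; \prob(0\stackrel{Q_r}{\lrlong} y)\,\prob(y\stackrel{y+Q_r}{\lrlong} z),
\]
the first factor being at least $f(r)$. The entire difficulty is in upgrading the \emph{second} factor from the naive bound $f(r)$ (which would only yield the useless quadratic recursion $f(2r)\ge f(r)^2$ and hence exponential decay) to a polynomial bound $c\,r^{-C_0}$.

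To obtain the polynomial bound on the second factor, I would exploit the two-point function estimate $\prob(y\lr z)\ge c\,r^{2-d}$ together with the coordinate symmetry. Rather than committing to a single midpoint $y$, I would average over the orbit of equivalent midpoints $y_1,\dots,y_M$ on $\partial Q_r$ under the lattice symmetries, and apply a second-moment method on the number $N$ of midpoints $y_i$ for which both $\{0\stackrel{Q_r}{\lrlong} y_i\}$ and $\{y_i\stackrel{y_i+Q_r}{\lrlong} z\}$ hold. The first moment $\mathbb E[N]$ is bounded below in terms of $M\cdot r^{2(2-d)}$ using the two-point estimate, while the second moment $\mathbb E[N^2]$ is controlled via the triangle condition, which is implied by (\ref{tpt}) in dimension $d>6$. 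Cauchy--Schwarz then gives $\prob(N\ge 1)\ge c\,r^{-C_0}$; by symmetry, specializing to a given canonical $z$ loses only a bounded factor depending on the lattice.

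The main obstacle is controlling $\mathbb E[N^2]$: its terms encode correlations between two different midpoints being simultaneously reached from $0$ and continued to $z$, producing ``triangle'' and ``quartic'' diagrams. These are finite under (ii), but squeezing out a polynomial-in-$r$ (rather than merely positive) lower bound on $\prob(N\ge 1)$ requires separating the parts of the cluster lying in $Q_r$ and in $y_i+Q_r$ by a careful exploration/conditioning argument, and bounding the resulting diagrammatic sums. The coordinate symmetries in (iii) are essential here to reduce these sums to a handful of canonical configurations and prevent anisotropic contributions from accumulating. Once this second-moment bound is in hand, the dyadic recursion closes and the induction proceeds routinely.
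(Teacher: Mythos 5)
Your global architecture (a multiplicative recursion over $O(\log r)$ scales, each scale losing a polynomial factor, summing to $e^{-C\log^2 r}$) matches the shape of the paper's argument, but the engine that is supposed to produce the polynomial factor per scale does not work. The crux is your claim that the second factor $\prob(y\stackrel{y+Q_r}{\lrlong}z)$ can be lower-bounded by $c\,r^{-C_0}$ via (\ref{tpt}) and a second-moment computation. First, note that $z-y\in\partial Q_r$, so by translation invariance this second factor is itself an instance of the quantity $f(r)$ you are trying to bound; a polynomial lower bound on it \emph{is} the lemma (indeed a much stronger version of it), so if your second-moment step worked the recursion would be superfluous --- and the authors explicitly remark that they believe such a polynomial bound but cannot prove it. Second, the first-moment bound $\E N\gtrsim M\,r^{2(2-d)}$ is unjustified: the estimate (\ref{tpt}) lower-bounds \emph{unrestricted} connection probabilities $\prob(y\lr z)$, whereas $N$ counts events defined by connections \emph{restricted to boxes}; passing from the former to the latter is precisely the difficulty the lemma addresses, so the argument is circular. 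Third, the orbit of a single midpoint under coordinate permutations and reflections has at most $2^d d!$ elements, so $M$ is a constant and averaging over the orbit buys nothing. Finally, your route would import the hypotheses $d>6$ and (\ref{tpt}), whereas the lemma is stated and proved for all dimensions with no such input (the paper opens the relevant chapter by disclaiming both).

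The missing idea is the paper's Lemma \ref{lem:sumP>1}: at $p\ge p_c$ one has $\sum_{y\in\partial Q_s}\prob(0\stackrel{Q_s}{\lrlong}y)\ge 1$, proved by showing that the contrary would force $\E|\C(0)|<\infty$ (via a BK regeneration argument) and hence subcriticality. Pigeonhole then yields \emph{some} $y\in\partial Q_s$ with restricted-connection probability at least $c\,s^{1-d}$ --- an averaged statement that requires no two-point function. The symmetry hypothesis is used only to permute and reflect the coordinates of this good $y$ so that each step moves toward the target $z$ while staying away from $\partial Q_r$; after $O(\log r)$ such steps one reaches $z$, and FKG multiplies the $cr^{1-d}$ factors to give $e^{-C\log^2 r}$. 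If you want to salvage your write-up, replace the second-moment step by this averaging-plus-steering mechanism; the dyadic bookkeeping you set up can then be kept essentially as is.
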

An interesting feature of this lemma is that it holds in {\em all   dimensions}. However, the estimate is definitely not sharp, and we believe that the probability is in fact polynomially small and that it is minimized when $z$ sits in the corner of the cube, and then the probability is $\approx r^{\xi(d)}$ with $\xi(d)=2-2d$ when $d>6$.
The proof of this lemma is elementary, and it is there that we require the lattice to be invariant under coordinate permutations and reflections.

\begin{figure}
\input{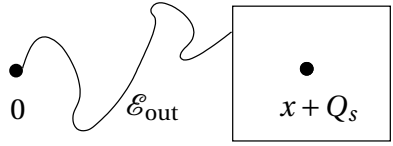}
\caption{\label{cap:eout} The event $\EE_\textrm{out}$ is independent of the edges in $x+Q_s$.}
\end{figure}
Even though it is not sharp, Lemma \ref{connect} suffices to prove regularity results on the cluster of the origin. The precise form of regularity we need is somewhat technical and we expand on that in chapter \ref{sec:ldv}. Here let us
demonstrate it with a simple example. Let $x\in\Z^d$ and define for any $A \subset \Z^d$
\[
\C(x; A)=\{y:x\stackrel{A}{\lrlong}y\}\,.
\]

Let $s\geq 0$ and consider $\C(x; x+Q_s)$. It is
well known since 1984 \cite{AN} that percolation clusters have an
exponential tail beyond their typical ``large'' size, which is $s^4$ in
our case. In other words
\be\label{eq:expo}
\prob_{p_c}(|\C(x; x+Q_s)| \ge s^4\log^3s) \le Ce^{-c\log^3 s}\,.
\ee
A regularity statement we wish to prove roughly asserts that the same bound (\ref{eq:expo}) holds even if we know that $0 \lr x$.
Formally we wish to prove that
\begin{equation}\label{eq:exmp}
\prob_{p_c}(|\C(x; x+Q_s)| \ge s^4\log^3 s\;|\;0\lrlong x) \le Ce^{-c\log^3 s} .
\end{equation}
What we need for the proof of Theorem \ref{lowmass} is somewhat different, but the idea is similar and Lemma \ref{connect} plays a crucial role.
To understand how the conditioning affects the picture, define
$\EE_\textrm{out}$ to be
the event that $0$ is connected to $x+Q_s$ (see Figure \ref{cap:eout}).
We now apply Lemma \ref{connect}. We get
\be\label{eq:E1E2}
\prob_{p_c}(0\lr x\,|\,\EE_\textrm{out})\ge ce^{-C\log^2 s} \, ,
\ee
simply because conditioning on $\EE_\textrm{out}$ reveals no
information about the status of the edges in $x+Q_s$, and it is enough
for $0\lr x$ to let $x$ connect to a single point on the boundary $x+\partial Q_s$.
The former reason also shows that the events $\EE_\textrm{out}$ and $|\C(x; x+Q_s)|\geq s^4\log^3 s$ are independent. Hence,
\begin{align*}
\lefteqn{\prob_{p_c}(0\lrlong x,\,|\C(x; x+Q_s)|\ge s^4\log^3s)\le}&&&\\
&&&\le\prob_{p_c}(\EE_\textrm{out},\,|\C(x; x+Q_s)|\ge s^4\log^3s)=\\
&&&=\prob_{p_c}(\EE_\textrm{out})\prob_{p_c}(|\C(x; x+Q_s)| \ge s^4\log^3s)\le\\
\mbox{By (\ref{eq:expo})}&&&\le \prob_{p_c}(\EE_\textrm{out}) Ce^{-c\log^3 s}\le\\
\mbox{By (\ref{eq:E1E2})}&&&
  \le \prob_{p_c}(0\lr x)
    Ce^{-c\log^3 s+C\log^2 s}\le Ce^{-c\log^3 s}\prob_{p_c}(0\lrlong x)\, ,
\end{align*} which shows (\ref{eq:exmp}).
Inequality (\ref{eq:exmp}) is a {\em local regularity} assertion. From it
one may get {\em global regularity} results in which similar estimates hold for most points of the cluster simultaneously. See
the full details in chapter \ref{sec:ldv}.

\subsection{Multiple arms.} The heuristic presented above suggests that the probability of having $\ell$ disjoint arms emanating from a small neighborhood of the origin is $r^{-2\ell}$. This is indeed the contents of the following theorem.

\begin{theorem} \label{multiplearms} For any integer $\ell\geq 1$
  there exists a constant $K=K(\ell)$ such that for any $r>0$ and for
  any vertices $y_1,\ldots, y_\ell \in B(0,\frac{1}{2}r)$ satisfying
$|y_i-y_j|\geq K$ for any $i\neq j$ we have
$$ \prob_{p_c} \big ( \{y_1 \lr \partial Q_r\} \circ \cdots \circ \{y_\ell \lr \partial Q_r\} \big ) \approx r^{-2\ell} \, ,$$
where the constants implied depend on $\ell$,$d$ and the specific lattice, but not on $r$.
\end{theorem}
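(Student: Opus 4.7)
For the upper bound, each event $\{y_i \lr \partial Q_r\}$ is increasing, so the van den Berg--Kesten inequality yields
\[
\prob_{p_c}\bigl(\{y_1 \lr \partial Q_r\} \circ \cdots \circ \{y_\ell \lr \partial Q_r\}\bigr) \leq \prod_{i=1}^{\ell} \prob_{p_c}(y_i \lr \partial Q_r) \leq C_\ell \, r^{-2\ell},
\]
where the second inequality uses translation invariance, the lower bound $\dist(y_i, \partial Q_r) \geq r/2$, and Theorem~\ref{mainthm}.

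For the lower bound I would induct on $\ell$; the base case $\ell = 1$ is Theorem~\ref{mainthm}. In the inductive step, fix $y_1,\ldots,y_\ell$ at pairwise distance at least $K=K(\ell)$ and reveal the configuration in two stages. First, jointly explore the open clusters of $y_1,\ldots,y_{\ell-1}$; let $\mathcal{U}$ denote the union of these clusters intersected with $Q_r$, together with the associated closed boundary edges. By the inductive hypothesis, with probability $\gtrsim r^{-2(\ell-1)}$ this first stage produces $\ell - 1$ disjoint arms from $y_1,\ldots,y_{\ell-1}$ to $\partial Q_r$; the regularity results of Chapter~\ref{sec:ldv} additionally give $|\mathcal{U}| = O(r^4)$ on a sub-event of comparable conditional probability. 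On this favorable first-stage event, it suffices to prove
\[
\prob_{p_c}\bigl(y_\ell \lr \partial Q_r \text{ off } \mathcal{U} \,\bigm|\, \text{first stage}\bigr) \gtrsim r^{-2},
\]
since the $\ell$ arms are then automatically edge-disjoint.

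Unconditionally $\prob(y_\ell\lr\partial Q_r)\gtrsim r^{-2}$ by Theorem~\ref{mainthm}, so the second-stage bound reduces to showing that the ``bad'' event $\{y_\ell \lr \partial Q_r\}\cap\{\C(y_\ell)\cap\mathcal{U}\neq\emptyset\}$ has probability at most $\tfrac{1}{2}\cdot r^{-2}$. A tree-graph style decomposition at the first branching vertex bounds this by
\[
\sum_{u\in\mathcal{U}}\sum_{v\in\Z^d}\prob(y_\ell\lr v)\,\prob(v\lr u)\,\prob(v\lr \partial Q_r),
\]
and the two-point estimate \eqref{tpt} together with the triangle-sum finiteness in $d>6$ reduces this to a quantity of order $r^{-2}\sum_{i<\ell}|y_i-y_\ell|^{6-d}$, which is $o(r^{-2})$ as $K\to\infty$. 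The main obstacle is that $\mathcal{U}$ is a conditioned cluster and not an independent set, so the tree-graph decomposition must be carried out inside the two-stage exploration so that the unrevealed edges retain their Bernoulli marginals. The independence-from-the-past arguments of \S\ref{subsec:connprob} and Lemma~\ref{connect}, together with the regularity framework of Chapter~\ref{sec:ldv} developed for Theorem~\ref{lowmass}, are designed for exactly this purpose; I expect them to yield the required conditional bound with constants deteriorating only by factors depending on $\ell$ and $K$.
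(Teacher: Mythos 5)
Your upper bound is exactly the paper's (BK plus Theorem \ref{mainthm}); no issues there.

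Your lower bound, however, takes a genuinely different route from the paper, and the route you chose has a real gap precisely at the step you flag as ``the main obstacle.'' The bound you need is, after the tree-graph decomposition, an estimate of the form
\[
\E\Bigl[\mathbf{1}_{\{\text{first stage succeeds}\}}\sum_{u\in\mathcal{U}}\sum_{v}\prob(y_\ell\lr v)\prob(v\lr u)\prob(v\lr\partial Q_r)\Bigr]\le \tfrac12\,r^{-2}\,\prob(\text{first stage succeeds}),
\]
and the quantity $\sum_{u\in\mathcal{U}}|y_\ell-u|^{4-d}$ must therefore be shown small \emph{conditionally on the $(\ell-1)$-arm event}, whose probability is only $\approx r^{-2(\ell-1)}$. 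Nothing in Chapter \ref{sec:ldv} delivers this: the regularity theorem (Theorem \ref{globalexploreshafan}) controls $|\C(x)\cap(x+Q_s)|$ for boundary points $x$ of $Q_j$ conditioned on $\C(x;Q_j)$, not the spatially weighted sum $\sum_{u\in\C(y_i)}|y_\ell-u|^{4-d}$ conditioned on disjoint arm events. Note also that volume control $|\mathcal{U}|=O(r^4)$ is not sufficient even if you had it: packing $r^4$ points of $\mathcal{U}$ near $y_\ell$ makes $\sum_u|y_\ell-u|^{4-d}$ of order $r^{16/d}$, which does not tend to zero with $K$. Moreover the factor $\prob(v\lr\partial Q_r)$ cannot be uniformly replaced by $r^{-2}$, since $u$ (and hence $v$) may lie near $\partial Q_r$; this is exactly the ``order of summation'' failure of the naive triangle-condition argument that the authors say they could not make work directly. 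In short, your plan reduces the theorem to a conditional interaction estimate that is itself as hard as anything in the paper, and you have not supplied it.

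The paper avoids conditioning entirely by an ``inverse BK'' argument: it runs the standard two-copy proof of the BK inequality on $Q_{2r}$, taking $\A=\{y_1\lr\partial Q_{2r}\}\circ\cdots\circ\{y_{\ell-1}\lr\partial Q_{2r}\}$ and $\B=\{y_\ell\lr\partial Q_{2r}\}$ on independent copies, and merges only the edges meeting $Q_r$ one at a time. Lemma \ref{pivotalmerge} bounds the probability lost at the merge of an edge $(z,z')$ by $Cr^{-2\ell}\sum_j\prob(y_j\lr z)\prob(y_\ell\lr z)$ --- an \emph{unconditional} bound obtained from BK and the already-proved Theorem \ref{mainthm} (which supplies the $\prob(z\lr\partial Q_{2r})^2\le Cr^{-4}$ and the remaining arm probabilities). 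Summing over $z$ via (\ref{tpt}) gives a total loss $Cr^{-2\ell}\ell K^{4-d}$, which is absorbed by $\prob(\A)\prob(\B)\ge cr^{-2\ell}$ (induction on $\ell$ enters only here) once $K=K(\ell)$ is large. This is what your two-stage exploration is missing: a mechanism that converts the interaction term into an unconditional sum. If you want to salvage your approach you would have to prove the conditional estimate above from scratch; otherwise the edge-merging argument is the intended (and much cleaner) path.
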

\noindent The upper bound of this theorem follows immediately from the BK inequality, however, the lower bound requires an ``inverse''-BK argument.

\subsection{The BK-Reimer inequality.} We close this introduction with a remark that might be interesting
to some. In the proof of Lemma \ref{sume1e2e3} we use Reimer's version
of the van den Berg-Kesten inequality \cite{R00,BCR}. It does not seem as if the
event at hand is an intersection of an increasing and a decreasing
event, so one cannot
replace it with the simpler van den Berg-Fiebig version
\cite{BF87}. Nor did we see an obvious reduction to any simpler
inequality. In short, it seems the full power of Reimer's inequality
is needed. In a similar spirit we drop the convention
of using $\circ$ for increasing events and $\square$ for general
events, and use $\circ$ for both.

\subsection{Notations.}\label{sec:notations}
By ``lattice'' we mean a graph embedded in $\R^d$ such that the vertex
set is $\Z^d$ and the edge set, which we shall denote by $E(\Z^d)$, is
periodic with respect to a group of translations spanning $\R^d$ (by
linear combinations). We
assume the degree of each vertex is finite.
We write $Q_r\subset \Z^d$ for the cube $\{-r,\ldots,r\}^d$ and
$\partial Q_r$ for its internal boundary
$$ \partial Q_r = \big \{ z \in Q_r : \exists x \not \in Q_r
\with (z,x) \hbox{ is an edge in } \Z^d \big \} \, .$$
We will not be very strict about $r$ being an integer, and in these
cases we denote $Q_r=Q_{\lfloor r \rfloor}$ etc.


For two vertices $x,y$ we write $x \lr y$ for the event that $x$ is
connected to $y$ by an open path. It will be convenient to assume that
$x\lr x$ occurs always. We write $\C(x)$ for the connected component containing $x$, that is, $\C(x) = \{ y : x\lr y\}$.
For a subset $A \subset \Z^d$ we write $x \stackrel{A}{\lrlong} y$ for the event that $x$ is connected to $y$ by an open path which is contained in $A$ (in particular, we must have $x,y\in A$) and we write $\C(x; A)$ to denote the vertices connected to $x$ within $A$, that is $\C(x;A) = \{ y : x \stackrel{A}{\lrlong} y \}$, as define above. We say that $x \lr y$ off $A$ if there is an open path connecting $x$ and $y$ which avoids the vertices of $A$.

For two events $\A, \B$ we write $\A \circ \B$ for the event that
there exists two disjoint sets $U$, $V\subset E(\Z^d)$ such that the
status of the edges of $U$ determines $\A$, and the status of the edges  of $V$ determines $\B$. We frequently use the BK-Reimer inequality stating that $\prob(\A \circ \B) \leq \prob(\A) \prob(\B)$ (see \cite{R00,BCR}) and the FKG inequality stating that if $\A$ and $\B$ are monotone increasing, then $\prob(\A \cap \B) \geq \prob(\A) \prob(\B)$. \medskip

For $x,y\in \Z^d$ we write $|x-y|$ for the Euclidean distance
$\sqrt{\sum_i \left((x)_i-(y)_i\right)^2}$, $||x-y||_\infty$ for
$\max |(x)_i-(y)_i|$ and $||x-y||_1$ for $\sum_i|(x)_i-(y)_i|$. For a
subset of vertices $S \subset \Z^d$ and a vertex $x\in \Z^d$ we write
$x+S$ for the translation $x+S = \{ x+ s : s\in S\}$.
We denote by $c$ and $C$ positive constants which depend only on $d$ and on the specific lattice. The value of $c$ and $C$ will change from place to place, even within the same formula --- occasionally we will number
the constants $c_1,c_2,\dotsc$ for clarity. Numbered constants do not
change their value. We use $c$ for constants which are ``small
enough'' and $C$ for constants which are ``large enough''. The
notation $X\approx Y$ is short for $cX<Y<CX$. We did not
make any attempt at optimizing constants in this work. Finally, let us
remark on the use of $K$. We use $K$ consistently to denote a small
translation or a small distance between two points. In a typical
lemma, $K$ will start out as a free parameter, but will be fixed to a
constant (depending on $d$ and the lattice) when enough information
was gathered. From that point on, we will consider it as just another $C$.

\subsection{Organization}
In the next chapter we show how to formalize the heuristic
relation mentioned in \S \ref{sec:outline} and then, using Theorem \ref{lowmass}, perform the induction which yields the proof of Theorem
\ref{mainthm}.

The majority of the paper is dedicated to proving Theorem
\ref{lowmass}. In chapter \ref{sec:connprob} we prove
Lemma \ref{connect}. We use this in chapter \ref{sec:ldv} to derive
the regularity theorem, and apply all this to prove Theorem \ref{lowmass}
in chapter \ref{sec:lowmass}. We conclude by proving Theorem \ref{multiplearms} in chapter \ref{sec:multiple}.

\section{{\bf The induction scheme: proof of Theorem \ref{mainthm} using Theorem \ref{lowmass}}}\label{sec:mainproof}

In this chapter we show how to derive our main result, Theorem
\ref{mainthm} from Theorem \ref{lowmass}.
The difficulty in the proof of Theorem \ref{mainthm} is the upper
bound. Indeed, the lower bound on $\prob \big ( 0 \lr \partial Q_r
\big )$ follows from a simple second moment estimate using the
$2$-point function estimate (\ref{tpt}). This will be will be proved in lemma \ref{lower}, right after the following simple calculation.
\begin{lemma}\label{lem:xyz}
There exists a constant $C>0$ such that for any $r$ we have
\[
\sum_{x,y\in Q_r} \prob(0\lr x,0\lr y)\le Cr^6
\]
\end{lemma}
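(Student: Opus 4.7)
The plan is to combine the standard tree-graph bound with the two-point function estimate (\ref{tpt}). By the BK-Reimer inequality applied to the ``last common vertex'' decomposition of two open paths from $0$ to $x$ and from $0$ to $y$, there is a branching vertex $u$ from which edge-disjoint open subpaths run to $0$, $x$ and $y$, and hence
\[
\prob_{p_c}(0\lr x,\,0\lr y)\;\le\;\sum_{u\in\Z^d}\tau(u)\,\tau(u-x)\,\tau(u-y),
\]
where $\tau(z):=\prob_{p_c}(0\lr z)$. Inserting the bound $\tau(z)\le C(|z|\vee 1)^{2-d}$ supplied by (\ref{tpt}) and setting $G(z):=(|z|\vee 1)^{2-d}$, it suffices to show
\[
S\;:=\;\sum_{u\in\Z^d}G(u)\Big(\sum_{x\in Q_r}G(u-x)\Big)^{\!2}\;\le\;Cr^6.
\]

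I would then estimate the inner sum in two regimes and split the outer sum accordingly. For $u\in Q_{2r}$, a polar-coordinate comparison (the exponent $2-d$ against the volume element $\rho^{d-1}\,d\rho$ produces $\int_0^{Cr}\rho\,d\rho$) yields $\sum_{x\in Q_r}G(u-x)\le Cr^2$, and the same estimate gives $\sum_{u\in Q_{2r}}G(u)\le Cr^2$; thus the contribution of $u\in Q_{2r}$ to $S$ is bounded by $Cr^2\cdot(Cr^2)^2=Cr^6$. For $|u|>2r$ the triangle inequality gives $|u-x|\ge |u|/2$ for every $x\in Q_r$, so $\sum_{x\in Q_r}G(u-x)\le Cr^d|u|^{2-d}$, and the contribution of this regime is at most
\[
Cr^{2d}\sum_{|u|>2r}|u|^{6-3d}\;\le\;Cr^{2d}\cdot r^{6-2d}\;=\;Cr^6,
\]
the tail sum converging because $3d-6>d$ (which follows from $d>6$, though $d>3$ already suffices for this step). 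Adding the two contributions yields the desired bound.

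The main obstacle is purely bookkeeping: choosing the cut at $|u|=2r$ so that the near and far regimes both produce exactly $r^6$. The coincidence of these two exponents reflects the mean-field prediction $\E(|\C(0)\cap Q_r|^2)\approx r^6$ consistent with (\ref{deltais2}), and the dimension hypothesis $d>6$ enters only through the input asymptotic (\ref{tpt}). No regularity or conditioning machinery is needed, and BK-Reimer is invoked only once, at the tree-graph step.
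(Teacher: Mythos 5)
Your proposal is correct and follows essentially the same route as the paper: the tree-graph/BK decomposition through a branching vertex, the two-point function bound (\ref{tpt}), and a split of the sum over the branching vertex into a near regime (giving $r^2\cdot r^2\cdot r^2$) and a far regime (giving $r^{2d}\sum_{|u|>Cr}|u|^{6-3d}\le Cr^6$). The only cosmetic difference is the location of the cut ($2r$ versus $dr$), which is immaterial.
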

\begin{proof}
If $x$ and $y$ are connected to $0$,
then there exists $z$ such that the events $\{0 \lr z\}$, $\{z\lr x\}$
and $\{z \lr y\}$ occur disjointly (we allow the case $z=0$). This is
easy to see, and \cite[proof of theorem (6.75)]{G} gives a careful
derivation. By the BK inequality and the two-point function
estimate (\ref{tpt}) we get that
\begin{align*}
&&\sum_{x,y\in Q_r} \prob(0\lr x,0\lr y) & \leq
\sum_{x,y\in Q_r, z\in\Z^d}\prob(\{0\lr z\}\circ\{z\lr x\}\circ\{z\lr
y\}) \le \\
&\lefteqn{\mbox{by BK}}&&\le
\sum_{x,y\in Q_r, z\in\Z^d}\prob(0\lr z)\prob(z\lr x)\prob(z\lr
y)<\\
&\lefteqn{\mbox{by (\ref{tpt})}}&&<
C \sum _{x,y \in Q_r ,z\in \Z^d} |z|^{2-d} |x-z|^{2-d} |y-z|^{2-d} \,
.
\end{align*}
We estimate this sum in two parts. For $|z|\leq dr$ we simply sum
over $y$, then over $x$ and finally over $z$ to get
$$ \sum _{x,y \in Q_r ,|z|\le dr} |z|^{2-d} |x-z|^{2-d}
|y-z|^{2-d} < C r^6 \, .$$
In the other case, $|z|> dr$ then $|z|> 2|x|$ because
$|x|\le r\sqrt{d}$ and $d>6$ so
$|z-x| > |z|/2$ and $|z-y|> |z|/2$. Hence
\[ \sum _{x,y \in Q_r ,|z|\ge dr} |z|^{2-d} |x-z|^{2-d}
|y-z|^{2-d} < C r^{2d} \sum_{|z|>dr} |z|^{6-3d} <
Cr^6 \, .\qedhere\]
\end{proof}
\begin{lemma} \label{lower} There exists some constant $c>0$ such that
$$ \prob \big ( 0 \lr \partial Q_r \big ) \geq \frac{c}{r^2} \, ,$$
for all $r>0$.
\end{lemma}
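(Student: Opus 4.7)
The plan is a straightforward second moment argument. Let
\[
N := \bigl| \C(0) \cap (Q_{2r} \setminus Q_r) \bigr|
\]
count the vertices outside $Q_r$ but inside $Q_{2r}$ that are connected to the origin. The geometric observation motivating this choice: if $N \geq 1$, then $\C(0)$ contains some vertex $x \notin Q_r$, and any open path from $0$ to $x$ must eventually leave $Q_r$, so its last vertex inside $Q_r$ lies on $\partial Q_r$ by definition. Hence $\{N \geq 1\} \subset \{0 \lr \partial Q_r\}$, and it suffices to show $\prob_{p_c}(N \geq 1) \geq c/r^2$.

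For the first moment, the two-point function estimate (\ref{tpt}) gives $\prob_{p_c}(0 \lr x) \geq c|x|^{2-d}$, and since the annulus $Q_{2r}\setminus Q_r$ contains $\gtrsim r^d$ lattice points each with $r \leq |x| \leq 2r\sqrt d$,
\[
\E[N] \;\geq\; c \sum_{x \in Q_{2r}\setminus Q_r} |x|^{2-d} \;\geq\; c\, r^d \cdot r^{2-d} \;=\; c r^2.
\]
For the second moment, the containment $Q_{2r}\setminus Q_r \subset Q_{2r}$ combined with Lemma \ref{lem:xyz} applied with $2r$ in place of $r$ yields
\[
\E[N^2] \;=\; \sum_{x,y \in Q_{2r}\setminus Q_r} \prob_{p_c}(0 \lr x,\, 0 \lr y) \;\leq\; C (2r)^6 \;\leq\; C' r^6.
\]

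Finally, since $N$ is integer valued, the Cauchy--Schwarz bound $(\E N)^2 = \E[N \mathbf{1}_{\{N \geq 1\}}]^2 \leq \E[N^2]\, \prob(N \geq 1)$ gives
\[
\prob_{p_c}(N \geq 1) \;\geq\; \frac{(\E N)^2}{\E[N^2]} \;\geq\; \frac{c}{r^2},
\]
which combined with the containment $\{N \geq 1\} \subset \{0 \lr \partial Q_r\}$ proves the claim. There is no real obstacle in the argument; the estimate rides entirely on the two ingredients (\ref{tpt}) and Lemma \ref{lem:xyz}, which deliver matching orders $r^2$ for $\E N$ and $r^6$ for $\E N^2$ (only the slight geometric point in Step~1 requires comment, and the choice of the annulus $Q_{2r}\setminus Q_r$ rather than $Q_r$ itself is what makes this step work).
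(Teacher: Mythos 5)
Your proof is essentially identical to the paper's: the same random variable (the paper calls it $X$, you call it $N$), the same first-moment bound from the two-point function estimate (\ref{tpt}), the same second-moment bound via Lemma \ref{lem:xyz}, and the same Cauchy--Schwarz conclusion. You have merely spelled out two small points the paper leaves implicit (that $N\ge1$ forces a boundary crossing, and that one applies Lemma \ref{lem:xyz} at scale $2r$), but the argument is the same.
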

\begin{proof}

Define the random variable $X$ by
$$ X = \Big | \Big \{ x \in Q_{2r} \setminus Q_r : 0 \lr x \Big \} \Big | \, .$$
By the $2$-point function estimate (\ref{tpt}) we have
$$ \E X > c r^d r^{2-d} = cr^2 \, .$$
The second moment is bounded by Lemma \ref{lem:xyz}, so $\E X^2 \leq Cr^6$. Observe that $X>0$
implies that $0 \lr \partial Q_r$ and hence we get
\[ \prob \big ( 0 \lr \partial Q_r \big ) \geq \frac{(\E X)^2}{\E X^2} \geq c r^{-2} \, .\qedhere\]
\end{proof}

We move to our main endeavor, that of proving Theorem \ref{mainthm} from Theorem
\ref{lowmass}. First we get from
Theorem \ref{lowmass} a recursive inequality for
$\prob(0\lr\partial Q_r)$. Let us state it as a lemma.
\begin{lemma}\label{lem:recurs}
Write $\gamma(r)=\prob( 0 \lr \partial Q_r )$. There exists positive constants $c_1$ and $C_1$ such that for all $\lambda\in(0,1]$ there exists $\eps_0=\eps_0(\lambda)$ such that for all $\eps\in(0,\eps_0)$ we have
\be\label{recursion} \gamma(r(1+\lambda)) \leq
\frac{C_1}{\sqrt{\eps} r^2}
 + \eps^{3/5} r^2 \gamma(r)\gamma\Big ( \frac{\lambda r}{2} \Big )
 + (1-c_1) \gamma(r) \,.
\ee
\end{lemma}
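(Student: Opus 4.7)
The plan is to formalize the three-case heuristic from subsection~\ref{sec:outline}. Fix the integer interval $I := [r, r(1+\lambda/2)] \cap \Z$ of cardinality $\sim \lambda r/2$ and the threshold $L := \eps^{3/10} r$, so that $L^2 = \eps^{3/5} r^2$. Partition the event $E := \{0 \lr \partial Q_{r(1+\lambda)}\}$ into three disjoint sub-events: \emph{(i)} $|\C(0)| \geq \eps r^4$; \emph{(ii)} $|\C(0)| < \eps r^4$ and $X_j < L^2$ for some $j \in I$; \emph{(iii)} $|\C(0)| < \eps r^4$ and $X_j \geq L^2$ for every $j \in I$. Case~\emph{(i)} is immediate from (\ref{deltais2}) applied with $n = \eps r^4$, giving the first term $C_1/(\sqrt{\eps}\,r^2)$.

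For case~\emph{(ii)}, let $j^* := \min\{j \in I : X_j < L^2\}$ and partition on the events $\{j^* = j_0\}$, which are disjoint in $j_0$. Conditioning on the $\sigma$-algebra $\F_{j_0}$ of edges inside $Q_{j_0}$ makes the cluster $\C(0; Q_{j_0})$ measurable; for $E$ to occur, one of the at most $X_{j_0} < L^2$ vertices of $\C(0; Q_{j_0}) \cap \partial Q_{j_0}$ must extend outward across a distance $\geq \lambda r/2$. A conditional union bound gives
\[
\prob_{p_c}(E,\, j^* = j_0) \leq L^2\, \gamma(\lambda r/2)\, \prob_{p_c}(j^* = j_0,\, X_{j_0} \geq 1).
\]
The events $\{j^* = j_0,\, X_{j_0} \geq 1\}$ are disjoint in $j_0$, and each forces $0 \lr \partial Q_r$ by the first-entry argument, so their total probability is at most $\gamma(r)$. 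Summing over $j_0$ yields the middle term
\[
\prob_{p_c}(\text{case (ii)}) \leq L^2 \gamma(r) \gamma(\lambda r/2) = \eps^{3/5} r^2 \gamma(r) \gamma(\lambda r/2),
\]
\emph{without} any $|I|$-factor.

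For case~\emph{(iii)} the key input is pigeonhole: every $y \in \C(0)$ belongs to $Q_{j+L} \setminus Q_j$ for at most $L$ integer values of $j$, so $\sum_{j \in I} A_j \leq L|\C(0)| \leq L\eps r^4$, forcing some (sample-dependent) $j^* \in I$ to satisfy $A_{j^*} \leq 2L\eps r^3/\lambda = 2\eps^{13/10} r^4/\lambda$. Setting $\eps_0(\lambda) := (c\lambda/2)^{10}$ ensures $A_{j^*} \leq cL^4$ whenever $\eps \leq \eps_0$; combined with $X_{j^*} \geq L^2$ this places us in the hypothesis of Theorem~\ref{lowmass} at $j = j^*$, producing the factor $(1-c)$.

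The hard part, and the principal obstacle, is extracting the $(1-c_1)\gamma(r)$ bound for case~\emph{(iii)} when the pigeonhole $j^*$ depends on the realization: a crude union bound over $j \in I$ would inflate by $|I| \sim \lambda r$, which is fatal for the iteration in the next chapter. I would circumvent this by sharpening the partition so that Theorem~\ref{lowmass} is applied only at the fixed deterministic $j = r$ (yielding $(1-c)\gamma(r)$ directly on $\{X_r \geq L^2,\, A_r \leq cL^4\}$), and controlling the residual sub-event $\{A_r > cL^4\} \cap \{|\C(0)| < \eps r^4\}$ via a concentration/second-moment estimate on $A_r$ built from the two-point bound (\ref{tpt}) and Lemma~\ref{lem:xyz}; for $\eps \leq \eps_0(\lambda)$ this residual is small enough to be absorbed into the case~\emph{(i)} contribution $C_1/(\sqrt{\eps}\,r^2)$, delivering the recursion in the stated form.
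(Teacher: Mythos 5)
Your cases (i) and (ii) reproduce the paper's treatment of its events $\B_1$ and $\B_2$ essentially verbatim (the paper also disposes separately of the range $\eps\le 2r^{-3}$, so that $L\ge j^{1/10}$ and Theorem \ref{lowmass} is actually applicable --- a point you omit but which is easily repaired). The gap is in case (iii), exactly at the step you yourself flag as the principal obstacle. Your proposed fix --- apply Theorem \ref{lowmass} only at the single deterministic scale $j=r$ and absorb the residual event $\{A_r>cL^4\}\cap\{|\C(0)|<\eps r^4\}$ into the $C_1/(\sqrt{\eps}\,r^2)$ term --- cannot work. First, $\{A_r>cL^4\}$ is not a rare deviation to be killed by a second-moment bound: given $X_r\ge L^2$ it is the \emph{typical} outcome (that is precisely the content of Theorem \ref{lowmass}), so the ``residual'' is in fact the bulk of case (iii). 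Second, the only smallness available comes from $A_r>cL^4$ forcing $|\C(0)|>c\eps^{6/5}r^4$, and (\ref{deltais2}), which is sharp, gives at best a probability of order $L^{-2}=\eps^{-3/5}r^{-2}$; the extra constraint $|\C(0)|<\eps r^4$ does not improve the power of $\eps$. And $\eps^{-3/5}r^{-2}$ is fatal downstream: in the induction proving Theorem \ref{mainthm}, a first term $C_1\eps^{-a}r^{-2}$ forces $\eps^{-a}\ll M$ while the middle term forces $\eps\ll M^{-5/3}$, so one needs $a<3/5$ strictly; $a=3/5$ is exactly the borderline that fails.

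The paper's resolution keeps the scales deterministic but uses \emph{many} of them: it fixes $j_i=r+\frac{1}{4}\lambda r+iL$ for $1\le i\le\frac{1}{4}\lambda\eps^{-3/10}$, so that the annuli $Q_{j_i+L}\setminus Q_{j_i}$ are disjoint, and counts $I=|\{i:X_{j_i}\ge L^2 \hbox{ and } A_{j_i}<c_2L^4\}|$. On $\B_3$, disjointness of the annuli and $|\C(0)|<\eps r^4$ force $A_{j_i}<c_2L^4$ for all but at most $c_2^{-1}\eps^{-1/5}$ indices --- a vanishing fraction of the $\approx\eps^{-3/10}$ available --- so $I$ is nearly maximal; meanwhile Theorem \ref{lowmass} applied at each \emph{fixed} $j_i$ gives $\E I\le(1-c_2)\gamma(r)\cdot\frac{1}{4}\lambda\eps^{-3/10}$, and Markov's inequality then yields $\prob(\B_3)\le(1-c_1)\gamma(r)$ once $\eps$ is small depending on $\lambda$. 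This counting-plus-Markov step is what simultaneously avoids the union bound over $j$ and the randomness of your pigeonhole $j^*$; it is the ingredient your proposal is missing.
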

\noindent {\em Remark.} The value $\frac{3}{5}$ is somewhat arbitrary,
but the proof of Theorem \ref{mainthm} requires that it would be
larger than $\frac{1}{2}$.

\begin{proof} Let us first dispose of an uninteresting range of
parameters, the case that $\eps \le 2r^{-3}$. In this case we simply use
Barsky-Aizenman (\ref{deltais2}) and get
\[
\gamma(r(1+\lambda))\stackrel{(\ref{deltais2})}{\le}\frac{C}{\sqrt{r}}\le\frac{C}{\sqrt{\eps}r^2}
\]
and we are done (with no need to examine the other terms in
(\ref{recursion})).

Otherwise, define $L=\eps^{3/10}r$. Recall the
definitions of $X_j$ and $A_j$ (\ref{eq:defXj}), (\ref{eq:defAj})
preceding the statement of Theorem \ref{lowmass} (with the $L$ just
defined). If $0 \lr \partial Q_{r(1+\lambda)}$, then one of the
following events must occur
\begin{enumerate}
\item  $|\C(0)| \geq \eps r^4$,
\item For some $j\in[r(1+\lambda/4),r(1+\lambda/2)]$ we have $X_j\le L^2$
and $0 \lr \partial Q_{r(1+\lambda)}$,
\item For all $j\in[r(1+\lambda/4),r(1+\lambda/2)]$ we have
$X_j > L^2$ and $|\C(0)| < \eps r^4$.
\end{enumerate}
Denote these events by $\B_1$, $\B_2$ and $\B_3$ respectively.

\medskip
\noindent {\bf The term $\B_1$}. By Barsky-Aizenman (\ref{deltais2}) we bound
$$\prob(\B_1)\le \frac{C_1}{r^2\sqrt{\eps}} \, ,$$ which gives the first term in
(\ref{recursion}).

\medskip
\noindent {\bf The term $\B_2$}. We estimate $\prob(\B_2)$ using a
regeneration argument similar to the one used in \cite[eq.\ (3.8)]{KN}. Let
$j_0 \in [r(1+\lambda/4),r(1+\lambda/2)]$ be the first
$j$ for which $0<X_j \leq L^2$, and condition on $\C=\C(0;Q_{j_0})$. We get
\begin{equation}\label{eq:B2}
\prob(\B_2)=\sum_{A\;\textrm{admissable}}\prob(\C=A)
\prob(0\lr\partial Q_{r(1+\lambda)})\,|\,\C=A)
\end{equation}
where ``admissible'' means that $\prob(\C=A)>0$. If
$0\lr\partial Q_{r(1+\lambda)}$, then one of the vertices of
$\partial\C$ must be connected to $\partial
Q_{r(1+\lambda)}$ off $\C$, so we can write
\[
\prob(0\lr\partial Q_{r(1+\lambda)})\,|\,\C=A)\le
\sum_{x\in A\cap\partial Q_{j_0}}
  \prob(x\lr\partial Q_{r(1+\lambda)})\mbox{ off
  }A\,|\,\C=A).
\]
We now note that $\C(0;Q_j)$ allows to tell
whether $j=j_0$ or not --- no information from the rest of the
configuration is needed (here it is important that $j_0$ is the first
such $j$). Therefore the conditioning over
$\C(0;Q_{j_0})=A$ gives no information on the rest of the configuration
and we learn that
\begin{align*}
&&\prob(x\lr\partial Q_{r(1+\lambda)}\mbox{ off }A\,|\,\C=A)&=
\prob(x\lr\partial Q_{r(1+\lambda)}\mbox{ off }A) \\
&&&\le \prob(x\lr\partial Q_{r(1+\lambda)}) \\
&\lefteqn{\textrm{since }x\in\partial Q_{j_0}\subset Q_{r(1+\lambda/2)}}
&&\le\gamma\left(\frac{\lambda r}{2}\right).
\end{align*}
The sum over all $x$ gives a factor of at most $L^2$ by definition of $j_0$. Plugging this into (\ref{eq:B2}) gives
\[
\prob(\B_2)\le L^2\gamma\left(\frac{\lambda r}{2}\right)
\sum_{A\;\textrm{admissable}}\prob(\C=A)\le
L^2\gamma\left(\frac{\lambda r}{2}\right)\gamma(r)
\]
which is the second term in (\ref{recursion}).


\medskip
\noindent {\bf The term $\B_3$}. It is at this point that we use Theorem
\ref{lowmass}. Let us first verify the conditions of the Theorem,
namely that $j$ is sufficiently large and that $L\ge j^{1/10}$. We may
definitely assume that $r$ is sufficiently large because for small $r$
setting $C_1$ large will render the lemma true vacuously. And $j>r$. For the
second condition we recall that at the very beginning of the lemma we
assumed $\eps > 2r^{-3}$ and then
$L> (8r)^{1/10} > j^{1/10}$ (here is where we used $\lambda \le 1$ to
make sure $j\le 2r$). Hence we may apply Theorem \ref{lowmass}.
For every integer $1 \leq i \leq \frac{1}{4}\lambda\eps^{-3/10}$ let
$$
j_i = r + \tfrac{1}{4}\lambda r + iL  \in
[r(1+\tfrac{1}{4}\lambda),r(1+\tfrac{1}{2}\lambda)]\,.
$$
Let $c_2$ be the constant
from Theorem \ref{lowmass}. We define the random variable
$$ I = \Big | \Big \{ i : X_{j_i} \geq L^2 \and
                          A_{j_i} < c_2L^4 \Big \} \Big | \, .
$$
Now, if $|\C(0)|< \eps r^4$, then
\[
\left|\left\{i:A_{j_i}\ge c_2L^4\right\}\right|<
\frac{\eps r^4}{c_2 L^4}=\frac{\eps^{-1/5}}{c_2}.
\]
However, $\B_3$ implies that all $X_{j_i}\geq L^2$ and hence
\[
\B_3 \Rightarrow I >
\left\lfloor\tfrac{1}{4}\lambda \eps^{-3/10}\right\rfloor -
  c_2^{-1} \eps^{-1/5}\,.
\]
This last formula is the most interesting restriction on the exponent
$\nicefrac{3}{5}$ in the statement of the lemma. We need it here to be
less than $\nicefrac{2}{3}$ --- otherwise the term subtracted would be
bigger than the positive term rendering the estimate useless.

On the other hand, summing the estimate of Theorem
\ref{lowmass} over all $i$ gives that
\[
\E I \leq (1-c_2)\gamma(r)\tfrac{1}{4}\lambda\eps^{-3/10}
\]
and hence by Markov's inequality
\begin{align*}
&&\prob(\B_3) \le
\prob\left(I > \left\lfloor\tfrac{1}{4}\lambda \eps^{-3/10}\right\rfloor - c_2^{-1} \eps^{-1/5}\right)
&\leq \frac{(1-c_2)\frac{1}{4}\lambda\eps^{-3/10}}
{\frac{1}{4}\lambda\eps^{-3/10} - c_2^{-1}\eps^{-1/5}-1}\gamma(r) \\
&&&\le\frac{ 1-c_2}{ 1- C \lambda^{-1}\eps^{1/10} }
\gamma(r) \, ,
\end{align*}
and with $\eps$ sufficiently small, depending on $\lambda$, this is at most $(1-c_1)\gamma(r)$, say with $c_1:=\frac{1}{2}c_2$. This is the
last term in (\ref{recursion}) and the lemma is proved.
\end{proof}

\begin{proof}[Proof of Theorem \ref{mainthm}]
Let $c_1$ and $C_1$ be as in Lemma \ref{lem:recurs}. We first fix
$\lambda>0$ sufficiently small such that
\begin{equation}\label{eq:deflam}
(1+\lambda)^2\le 2,\qquad
(1-c_1)(1+\lambda)^2\le (1-\tfrac{1}{2}c_1).
\end{equation}
Next we fix $M$ so large
such that
\begin{align}
(2C_1+8\lambda^{-2}) M^{-1/11} &\leq \tfrac{1}{2}c_1 \, ,\label{areq}\\
M^{-20/11}&\le \eps_0(\lambda)\label{eq:epsgood}
\end{align}
where $\eps_0(\lambda)$ is also from the statement of Lemma
\ref{lem:recurs}.
We shall prove by induction that for any $r$ we have $\gamma(r)
\leq Mr^{-2}$. For convenience of notation, assume we wish to
prove the claim for $r(1+\lambda)$ so the induction assumption is
\[
\gamma(s)\le\frac{M}{s^2}\quad\forall s<r(1+\lambda).
\]
We now use Lemma \ref{lem:recurs} with $\eps=M^{-20/11}$ (here is where
we need (\ref{eq:epsgood})) and get
\begin{align*}
&&\gamma(r(1+\lambda))&\le
\frac{C_1}{\sqrt{\eps}r^2}
  +\eps^{3/5}r^2\gamma(r)\gamma\left(\frac{\lambda r}{2}\right)
  +(1-c_1)\gamma(r) \le \\
&\lefteqn{\mbox{inductively}}&&
\le \frac{C_1M^{10/11}}{r^2}
  +M^{-12/11}r^2\cdot\frac{M}{r^2}\cdot\frac{4M}{(\lambda r)^2}
  +(1-c_1)\frac{M}{r^2} \le \\
&\lefteqn{\mbox{by (\ref{eq:deflam})}}&&
\le\frac{M}{(r(1+\lambda))^2}\left(M^{-1/11}(2C_1+8\lambda^{-2})
  +(1-\tfrac{1}{2}c_1)\right) \le \\
&\lefteqn{\mbox{by (\ref{areq})}}&&
\le\frac{M}{(r(1+\lambda))^2}.
\end{align*}
This concludes the proof of the theorem. \end{proof}

\section{{\bf A lower bound on connection probability: proof of Lemma \ref{connect}}}\label{sec:connprob}

In this chapter we assume neither that $d>6$ nor that (\ref{deltais2})
 or (\ref{tpt}) hold.

\begin{lemma}
\label{lem:sumP>1}Let $\mathbb{Z}^{d}$ be a bounded lattice in
$\mathbb{R}^{d}$.
Then, for any $p\geq p_{c}$ and any $r>0$, \[ \sum_{z\in\partial
Q_{r}}\prob(0 \stackrel{Q_r}{\lrlong}  z)\geq1.\]
\end{lemma}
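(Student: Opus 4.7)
I would argue by contradiction. Let $N := \sum_{z \in \partial Q_r} \prob_p(0 \stackrel{Q_r}{\lrlong} z)$, assume $N < 1$, and show that this forces the susceptibility $\chi(p) := \E_p |\C(0)|$ to be finite. Since $p \geq p_c$, the sharpness of the phase transition (Aizenman--Barsky, Menshikov) asserts $\chi(p) = \infty$, and the contradiction will complete the proof.

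The bridge from $N$ to $\chi$ is a standard box decomposition via BK. For any $v \notin Q_r$, any open path from $0$ to $v$ contains a first edge $(z,w)$ with $z \in \partial Q_r$ and $w \notin Q_r$; the initial segment witnesses $\{0 \stackrel{Q_r}{\lrlong} z\}$ using only edges inside $Q_r$, while the remaining segment witnesses $\{z \lr v\}$ on a disjoint set of edges. The BK inequality then gives
\[
\prob_p(0 \lr v) \leq \sum_{z \in \partial Q_r} \prob_p(0 \stackrel{Q_r}{\lrlong} z)\, \prob_p(z \lr v) \qquad \text{for } v \notin Q_r.
\]
Summing over $v \in Q_R$ and using translation invariance to bound $\sum_{v \in Q_R} \prob_p(z \lr v)$ by $\chi_{R+r} := \sum_{u \in Q_{R+r}} \prob_p(0 \lr u)$ (since $Q_R - z \subseteq Q_{R+r}$ for any $z \in \partial Q_r \subseteq Q_r$) produces the truncated recursion $\chi_R \leq |Q_r| + N \chi_{R+r}$.

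Unrolling this recursion $k$ times yields $\chi_R \leq |Q_r|/(1-N) + N^k \chi_{R+kr}$; because $\chi_{R+kr}$ grows at most polynomially in $k$ while $N^k$ decays geometrically, the tail vanishes as $k \to \infty$, so $\chi(p) = \lim_R \chi_R \leq |Q_r|/(1-N) < \infty$. The main point to handle carefully is that ``lattice'' in this paper means merely periodic under some translation subgroup, so $\chi(z)$ for $z \in \partial Q_r$ a priori depends on the type of $z$ modulo the period. Since there are only finitely many types, the same recursion closes after replacing $\chi_R$ by its maximum over types, and sharpness at $p \geq p_c$ still precludes simultaneous finiteness of all of them.
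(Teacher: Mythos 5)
Your proof is correct, and it follows the same overall strategy as the paper's: argue by contradiction and show that $\sum_{z\in\partial Q_r}\prob(0\stackrel{Q_r}{\lrlong}z)<1$ forces $\E|\C(0)|<\infty$, which is incompatible with $p\ge p_c$. The execution, however, is genuinely different. The paper iterates the BK decomposition directly in space: a connection from $0$ to a point outside $Q_{nr}$ forces $n$ disjoint crossings of translated boxes $x_i+Q_r$, each crossing costing a factor $1-\eps$, so the two-point function decays exponentially in $\|z\|_\infty/r$ and the susceptibility is summed over annuli at the end. You instead prove the one-step Simon--Lieb/Hammersley-type inequality $\prob(0\lr v)\le\sum_{z\in\partial Q_r}\prob(0\stackrel{Q_r}{\lrlong}z)\prob(z\lr v)$ and unroll the resulting renewal inequality for the finite-volume susceptibility $\chi_R$; this is cleaner, avoids the combinatorics of the paper's $n$-tuples $\mathcal{X}_n$ of vertex-disjoint crossing paths, and your truncation argument ($N^k\chi_{R+kr}\to 0$ because $\chi_{R+kr}$ grows only polynomially in $k$) is handled correctly. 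What you lose is the intermediate exponential decay of $\prob(0\lr z)$, which is not needed here anyway. One caveat concerns your closing remark on periodic lattices: replacing $\chi_R$ by a maximum over translation types only closes the recursion if $\sum_{z}\prob(u\stackrel{u+Q_r}{\lrlong}z)<1$ for \emph{every} type $u$, whereas the contradiction hypothesis controls only the origin's type; so your proposed repair does not quite go through as stated. Since the paper's own proof relies on exactly the same translation invariance (it rewrites each crossing probability as $\prob(0\lr x_{i}-x_{i-1}$ in $Q_r)$), this is a shared imprecision rather than a gap specific to your argument, and in the fully translation-invariant setting actually used in the paper both proofs are complete.
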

\begin{proof}
Assume the contrary and let $\eps>0$ and $r>0$ be such that
$\sum_{z\in\partial Q_{r}}\linebreak[1]\prob(0 \stackrel{Q_r}{\lrlong} z)=1-\eps$. We will show that\begin{equation} \E |\C(0)|<\infty.\label{eq:EC0finite}\end{equation} It is well known that
this implies that $p<p_{c}$ --- see \cite[eq. (3.2)]{AN} or \cite{G}.
Hence the lemma will be proved once we demonstrate
(\ref{eq:EC0finite}).

To see (\ref{eq:EC0finite}) fix an integer $n$. Let $\mathcal{X}_n$
be the collection of $n$-tuples $0=x_{1},\dotsc,x_{n}$
satisfying that
\begin{enumerate}
\item $x_{i+1}\in x_{i}+\partial Q_{r}$; \item There exist open
simple paths $\gamma_{i}$ from $x_{i}$ to $x_{i+1}$ with
$\gamma_{i}\subset x_{i}+Q_{r}$; and \item The $\gamma_{i}$ are
vertex-disjoint except at their end-points.
\end{enumerate}
By the BK inequality and translation invariance we have\[
\prob((x_{1},\dotsc,x_{n})\in\mathcal{X}_n)\leq\prod_{i=1}^{n}\prob(0\leftrightarrow
x_{i}-x_{i-1}\mbox{ in }Q_{r})\] and summing over all possible
$n$-tuples $(x_{1},\dotsc,x_{n})$ gives\[
\prob(\mathcal{X}_n\ne\emptyset)\leq\left(\sum_{z\in\partial
Q_{r}}\prob(0\leftrightarrow z\mbox{ in
}Q_{r})\right)^{n}=(1-\epsilon)^{n}\] by our contradictory
assumption.

Now fix some $z\notin Q_{nr}$. If
$0\leftrightarrow z$ then there must exist some open simple path
$\gamma:0\to z$. Define $x_{1}=0$ and then inductively $x_{i+1}$
to be the first point on $\gamma$ after $x_{i}$ in $x_{i}+\partial
Q_{r}$. Clearly this process lasts at least $n$ steps. Hence
$0\leftrightarrow z$ implies $\mathcal{X}_n\neq\emptyset$ and in
particular \[ \prob(0\leftrightarrow
z)\leq(1-\epsilon)^{n}.\] So\[ \E\!\left(\left|\mathcal{C}(0)\cap
(Q_{(n+1)r}\setminus Q_{nr})\right|\right)\leq(2(n+1)r)^{d}(1-\epsilon)^{n}\] and summing over
$n$ gives (\ref{eq:EC0finite}) and finishes the lemma.
\end{proof}
\begin{figure}
\input{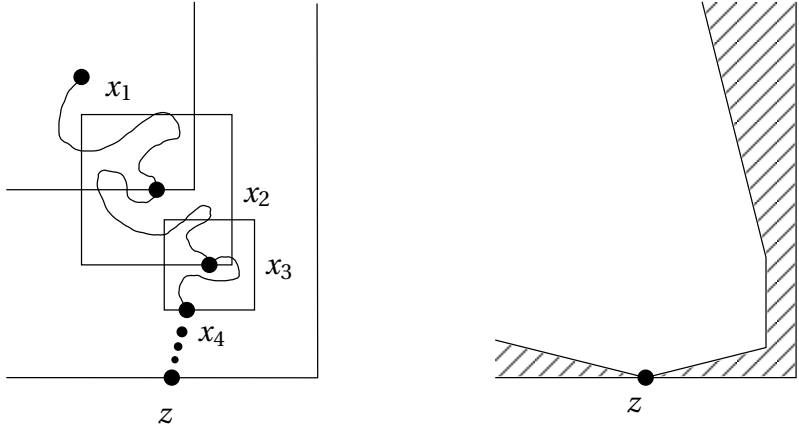}
\caption{\label{cap:z}On the left, the points $x_i$. On the right, the forbidden area for each $x_i$.}
\end{figure}
\begin{proof}[Proof of lemma \ref{connect}]
We shall construct a sequence of cubes $x_i+Q_{M_i}\subset
Q_r$, for $i=1,\dotsc, N$ and $N\leq C \log r$, such that $x_1=0$ but $x_N=z$ and
\begin{equation}\label{eq:xixi+1}
\prob(x_i\stackrel{x_i+Q_{M_i}}{\lrlong}x_{i+1})\ge cr^{1-d}.
\end{equation}
See Figure \ref{cap:z}, left.
This will of course finish
the lemma, by the FKG inequality:
\begin{align*}
&&\prob(0\stackrel{Q_r}{\lrlong}z) &
  \ge
  \prob(x_1\stackrel{Q_r}{\lrlong}{x_2},\dotsc,x_{N-1}\stackrel{Q_r}{\lrlong}x_N)\ge\\
&\mbox{by FKG}\qquad &&\ge
    \prod_{i=1}^{N-1}\prob(x_i\stackrel{Q_r}{\lrlong}x_{i+1})\ge \\
&\lefteqn{\mbox{because $x_i+Q_{M_i}\subset Q_r$}} &&\ge
    \prod_{i=1}^{N-1}\prob(x_i\stackrel{x_i+Q_{M_i}}{\lrlong}x_{i+1})\ge \\
&\lefteqn{\mbox{by (\ref{eq:xixi+1}) and $N\le C\log r$}} &&\ge
    \prod_{i=1}^{N-1}cr^{1-d}\ge ce^{-C\log^2 r}.
\end{align*}
Hence we only
need to construct the $x_{i}$.

The construction is inductive,
and it is important to keep the $x_i$ away from the boundary of $Q_r$
throughout the process --- otherwise we would not be able to choose a
reasonably big $M_i$ with $x_i+Q_{M_i}\subset Q_r$. Hence we will
require that
for every index $1\leq j\leq d$,
\begin{equation}
r-\left|\left(x_{i}\right)_{j}\right|\geq
\frac{1}{4}\left\Vert
z-x_{i}\right\Vert _{\infty}.\label{eq:farboundary}
\end{equation}
See figure \ref{cap:z}, right.

We proceed to the details of the construction. Assume $x_1,\dotsc,x_i$
have already been defined. Define
$M_{i}=\frac{1}{4}\left\Vert z-x_{i}\right\Vert _{\infty}$. By
assumption (\ref{eq:farboundary}), $x_i+Q_{M_i}\subset Q_r$, as
required. By Lemma \ref{lem:sumP>1} we have
\[
\sum_{y\in\partial Q_{M_{i}}}\prob(0 \stackrel{Q_{M_i}}{\lrlong}
y)\geq1
\]
and therefore there exist some $y$ such that
\begin{equation}
\prob(0 \stackrel{Q_{M_i}}{\lrlong} y )\geq cM_{i}^{1-d} \geq c r^{1-d} .
\label{eq:defy}\end{equation}
We want to define $x_{i+1}=x_i+y$ but that might take us in the wrong
direction, that is, not towards $z$. It is at this point that we use the symmetries of the
lattice. The symmetries allow us to rearrange the coordinates
of $y$ and change their signs and (\ref{eq:defy}) will still hold.
We do so according to the following rules:
\begin{enumerate}
\item $|y_{j}|$ are arranged like
$\left|\left(z-x_{i}\right)_{j}\right|$ i.e.\ if
$\left|\left(z-x_{i}\right)_{j}\right|>\left|\left(z-x_{i}\right)_{k\vphantom{j}}\right|$
then $|y_{j}|\geq|y_{k}|$;\label{enu:order}
\item In directions $j$ where
$\left|\left(z-x_{i}\right)_{j}\right|\geq 2M_i$ we want $x_{i+1}$ to
  be closer to $z$, so set $\sign
  y_{j}=\sign\left(z-x_{i}\right)_{j}$. We will see later that this
  automatically takes care of the distance from $\partial Q_r$.
\label{enu:closer z}
\item Otherwise we ignore the distance from $z$ and just pull away
  from $\partial Q_r$ i.e.\ set
$\sign y_{j}=-\sign\left(x_{i}\right)_{j}$. For notational
  convenience, assume here and below that $\sign 0 = 1$.
\label{enu:furtherbdry}
\end{enumerate}
This concludes the description of the construction, and we
automatically get the connection probability estimate (\ref{eq:xixi+1}).

Next we wish to verify that we indeed reach a neighbor of $z$ in at most
$C\log r$ steps and that (\ref{eq:farboundary}) holds. We shall show that every step of the induction
does not increase $||z-x_i||_\infty$ and after $d$ steps the norm is
reduced by a constant i.e.
\begin{equation}\label{eq:zxi34}
||z-x_{i+d}||_\infty\le \tfrac{3}{4}||z-x_i||_\infty
\end{equation}
which is enough. We first note that by \ref{enu:closer z}, the fact that
$||y||_{\infty}\leq M_i$ and that $||z-x_i||_\infty=4M_i$ it is immediately clear that
\begin{equation}
\left\Vert z-x_{i+1}\right\Vert _{\infty}\leq
\left\Vert z-x_{i}\right\Vert_{\infty}.\label{eq:inftyle}
\end{equation}
Further, since $y\in\partial Q_{M_i}$ then it must have at least one
coordinate with absolute value $M_i$. Denote by $j_1$ the largest coordinate in absolute value of $z-x_i$.
We get that $|\left(z-x_i\right)_{j_1}|$ is reduced from
$4M_i$ to $3M_i$. Again from \ref{enu:closer z} we
see that at the next steps it will stay below $3M_i$, because it can
only increase (at some step $i+k$) if it becomes $\le 2M_{i+k}$ and in
this case it can only increase up to $3M_{i+k}$. This is $\le 3M_i$
by (\ref{eq:inftyle}). In short we get
\begin{equation}\label{eq:goodcoord}
\left|\left(z-x_{i+k}\right)_{j_1}\right|\le 3M_i\quad\forall k\ge 1.
\end{equation}
Next denote by $j_2$ the largest coordinate of $z-x_{i+1}$. If
$j_2=j_1$ then $||z-x_{i+1}||_\infty \le 3M_i$ and (\ref{eq:zxi34}) is
proved. Otherwise we get from the same arguments
\[
\left|\left(z-x_{i+1+k}\right)_{j_2}\right|\le 3M_{i+1}\le 3M_i
\quad\forall k\ge 1.
\]
And so on. By step $i+d$ we would have either covered all coordinates
or run into a case of two equal $j$-s, either which
demonstrates (\ref{eq:zxi34}) and hence that $N\le C\log r$.

To complete the induction we need to show that
(\ref{eq:farboundary}) is preserved. Clearly it holds for $i=0$. For
$i>1$ we have two cases:

\subsubsection*{The case $\left|\left(z-x_{i}\right)_{j}\right|<2M_i$}
 In this case, by \ref{enu:furtherbdry}, we try to increase the
 distance from $r$, and  we succeed unless
 $\big|\big(x_i\big)_j\big|<
\frac{1}{2}\big|\big(y_i\big)_j\big|$. If
we succeed then
\begin{equation}
r-\left|\left(x_{i+1}\right)_{j}\right|
  \geq
r-\left|\left(x_{i}\right)_{j}\right|
  \stackrel{(\ref{eq:farboundary})}{\geq}
M_i\stackrel{(\ref{eq:inftyle})}{\geq}
M_{i+1}\label{eq:distinc}
\end{equation}
where the reference to (\ref{eq:farboundary}) in the formula above
is a reference to our inductive assumption of the validity of
(\ref{eq:farboundary}) in the previous step. If we failed, then
\[
r-\left|\big(x_{i+1}\big)_j\right|\ge
r-\left|\left(y_i\right)_j\right|\ge 4M_i-M_i
\stackrel{(\ref{eq:inftyle})}{>}M_{i+1}\,.
\]

\subsubsection*{The case $\left|\left(z-x_{i}\right)_{j}\right|\ge 2M_i$}
If adding $y$ increases the distance of $x$ to $\partial Q_r$ then the
argument of (\ref{eq:distinc}) applies with no change. If not,
then we must have that
\[
\left(x_i\right)_j\cdot\sign((z)_j)\in
\left[-\tfrac{1}{2}|y_j|,|(z)_j|-2M_i\right]
\]
so
\[
\left(x_{i+1}\right)_j\cdot\sign((z)_j)\in
\left[\tfrac{1}{2}|y_j|,|(z)_j|-M_i\right]\,.
\]
But in this case
\[
r-\left|\left(x_{i+1}\right)_j\right|\ge
M_i \stackrel{(\ref{eq:inftyle})}{\geq}M_{i+1}\,.
\]
Together with (\ref{eq:distinc}) this shows that
(\ref{eq:farboundary}) is preserved inductively and hence holds
for all $i$. This shows that the induction is valid and proves the lemma.
\end{proof}
\begin{corollary}\label{twosquares} Let $r>s>0$ and let
$x\in\Z^d$ such that
$$ (x+Q_s) \cap \partial Q_r \neq \emptyset \, .$$
Then for any $y\in  B:=(x+Q_s)\cap Q_r$
\[
\prob(y\stackrel{B}{\lr}\partial Q_r)\ge e^{-c\log^2 s}\,.
\]
\end{corollary}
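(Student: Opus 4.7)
Plan: The proof extends the construction in the proof of Lemma \ref{connect} from the cube $Q_r$ to the box $B = (x+Q_s) \cap Q_r$. Using the assumed invariance of the lattice under coordinate permutations and reflections, I may pick one face of $Q_r$ that meets $x+Q_s$ and reduce to the case $B = \prod_{j=1}^{d} [a_j, b_j]$ with $b_1 = r$, so that the target set $F := B \cap \{z_1 = r\}$ lies in $\partial Q_r$. It then suffices to prove $\prob(y \stackrel{B}{\lrlong} F) \ge e^{-c \log^2 s}$.

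I will build a chain $y = y_0, y_1, \ldots, y_N$ in $B$ with $(y_N)_1 = r$, together with cubes $y_i + Q_{M_i} \subseteq B$ along which $\prob(y_i \stackrel{y_i + Q_{M_i}}{\lrlong} y_{i+1}) \ge c M_i^{1-d}$. Existence of such a $y_{i+1}$ comes from Lemma \ref{lem:sumP>1} applied by translation to $y_i + Q_{M_i}$, followed by an application of coordinate symmetries to orient the displacement $y_{i+1}-y_i$. The FKG inequality then yields $\prob(y \stackrel{B}{\lrlong} y_N) \ge \prod_{i=0}^{N-1} c M_i^{1-d}$. Since every $M_i$ is bounded above by the $\ell_\infty$-diameter of $B$, which is at most $2s$, each factor is at least $c s^{1-d}$, and the desired bound $e^{-c\log^2 s}$ follows as soon as $N = O(\log s)$.

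The rules for selecting the step $y_{i+1}-y_i$ mimic the ``closer to target'' and ``farther from boundary'' rules in the proof of Lemma \ref{connect}, adapted to $B$: in coordinate $1$ I always push $(y_{i+1})_1$ toward $r$, and in coordinate $j \neq 1$ I push $(y_{i+1})_j$ away from whichever of $a_j, b_j$ is closer to $(y_i)_j$. This preserves the $B$-analog of (\ref{eq:farboundary}) --- namely $(y_i)_j$ stays at distance at least $M_i$ from every non-target face of $B$ --- which is exactly what is needed to keep $y_i + Q_{M_i} \subseteq B$ at every step.

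The main obstacle is to bound $N$ when $y$ begins close to several non-target faces of $B$ simultaneously, forcing the first few $M_i$ to be small. I control this by tracking two quantities in parallel: the first-coordinate gap $r - (y_i)_1$ and the minimum distance from $y_i$ to the non-target faces. A coordinate-by-coordinate doubling argument paralleling (\ref{eq:zxi34})--(\ref{eq:goodcoord}) shows that within $O(\log s)$ steps the non-target distances grow to exceed the first-coordinate gap, after which another $O(\log s)$ halving steps (in the spirit of the proof of Lemma \ref{connect}) bring $(y_N)_1$ up to $r$. Together this gives $N = O(\log s)$, which completes the argument.
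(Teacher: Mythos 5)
Your plan founders on the geometry of $B$ in the case where $x+Q_s$ protrudes out of $Q_r$ in a direction other than the one you chose as the target. The constraint $y_i+Q_{M_i}\subseteq B$ forces $M_i\le\frac{1}{2}\min_j(b_j-a_j)$, and a non-target direction can have $b_j-a_j=O(1)$. Concretely, take $x=(r,\,r+s-2,\,0,\dotsc,0)$, so that $B=[r-s,r]\times[r-2,r]\times[-s,s]^{d-2}$ and $(x+Q_s)\cap\partial Q_r\ne\emptyset$; the face $\{z_1=r\}$ of $Q_r$ does meet $x+Q_s$, so your reduction permits choosing it as the target. Starting from $y=(r-s,\,r-2,\,0,\dotsc,0)$, every admissible cube has $M_i\le 1$ because of the second coordinate, so closing the first-coordinate gap of size $s$ requires $N\ge s$ steps rather than $O(\log s)$, and the FKG product of per-step probabilities yields only $c^{s}$ --- exponentially far from $e^{-C\log^2 s}$. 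In particular the pivotal claim of your last paragraph, that the non-target distances grow to exceed the first-coordinate gap within $O(\log s)$ steps, is false: in a thin non-target direction that distance is capped forever at half of a bounded edge length, while the gap remains of order $s$.

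The gap is repairable, and the repair points to what the paper actually does. A direction in which $B$ is thin is precisely one in which $x+Q_s$ exits $Q_r$, so the corresponding face of $B$ lies on $\partial Q_r$; hence every point of $B$ is within $\ell$ of $\partial Q_r$, where $\ell\le 2s$ is the shortest edge of $B$. If you target a face in a thinnest direction, all non-target edges have length at least $\ell$ while the gap is at most $\ell$, and your doubling/halving scheme can then plausibly be pushed through with $N=O(\log s)$. The paper avoids redoing the chain construction altogether: it finds a single cube $z+Q_{\ell/2}\subseteq B$ containing both $y$ and a point of $\partial Q_r$, and applies Lemma \ref{connect} as a black box twice (in $z+Q_{\|y-z\|_\infty}$ to join $y$ to $z$, and in $z+Q_{\ell/2}$ to join $z$ to $\partial Q_r$), gluing the two events with FKG. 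Either adopt that reduction, or add the ``target the thinnest exit direction'' rule and rework your step-count argument accordingly.
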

\begin{figure}
\input{cor32.pstex_t}
\caption{\label{cap:cor32} Corollary \ref{twosquares}}
\end{figure}
See figure \ref{cap:cor32}.
\begin{proof}$B$ is a box (i.e.\ with the
  sides parallel to the axis, but their length not necessarily equal). Denote by $\ell$ its
  shortest edge so that $\ell \le 2s$. It is now easy to see that one
  can find a cube $z+Q_{\ell/2}\subset B$ containing both $y$ and at
  least one point from $\partial Q_r$ --- just construct $z$
coordinate by coordinate, they are independent. And now write
\begin{align*}
&&\prob(y\stackrel{B}{\lrlong}\partial Q_r)&
  \ge \prob(y\stackrel{B}{\lrlong}z\mbox{ and
  }z\stackrel{B}{\lrlong}\partial Q_r)\ge\\
&\mbox{by FKG}&&
  \ge
  \prob(y\stackrel{B}{\lrlong}z)\prob(z\stackrel{B}{\lrlong}\partial Q_r)\ge\\
&&& \ge \prob(y\stackrel{z+Q_{||y-z||_\infty}}{\leftrightarrowfillCS}z)
      \prob(z\stackrel{z+Q_{\ell/2}}{\leftrightarrowfillCS}\partial Q_r)\ge\\
&\lefteqn{\mbox{by Lemma \ref{connect}}}&&
  \ge
  c\exp\left(-C\log^2||y-z||_\infty\right)\exp\left(-C\log^2(\ell/2)\right)\ge\\
&&& \ge c\exp(-C\log^2 s)
\end{align*}
as required.
\end{proof}


\section{{\bf A regularity theorem}}\label{sec:ldv}
In the following we prove a regularity result which is the key element
in the proof of Theorem \ref{lowmass} in chapter \ref{sec:lowmass}. We
recommend the reader reads first \S \ref{sec:global}, containing
the required definitions and the statement of the theorem, then read
how it is used in chapter \ref{sec:lowmass} and especially in Lemma
\ref{sume1e2e3} before returning to the proof of the regularity
theorem, which is the bulk of this chapter.

\subsection{Statement of the regularity theorem.}\label{sec:global}
We are interested in estimating the tails of random variables of the
form $|\C(x) \cap Q_s|$. For any particular $x$ this can easily be
done using (\ref{tpt}), the BK inequality and a moment calculation. In
fact, this is exactly performed in \cite{A}. Let us therefore define
the event that the cluster is ``typical'',
\be\label{eq:defSs}
\Ss_s(x)=\left\{
  \left|\mathcal{C}(x)\cap (x+Q_s)\right|<s^4\log^7s
\right\}\, .
\ee
As discussed in the introduction (see (\ref{eq:expo})),
$\prob(\Ss_s(x))>1-e^{-c\log^7 s}$. Where we deviate from the
simplified sketch in the introduction is in the following definition:
\begin{definition}\label{def:globalbad} For $x \in \partial Q_j$ and  positive integers $s$ and $K$ we define the following events.

\begin{enumerate}
\item We say that $x$ is $s$-bad if $\C(x; Q_j)$ satisfies
\be\label{globalbadness} \prob \big ( \Ss_s(x) \, \mid \, \C(x; Q_j) \big ) \leq 1-\exp(- \log^2 s) \, .\ee

\item We say that $x\in \partial Q_j$ is $K$-irregular if
there exists $s\geq K$ such that $x$ is $s$-bad. Otherwise we say that $x$ is $K$-regular.
\end{enumerate}
\end{definition}
The notation ``$\mid\,\C(x;Q_j)$'' means that we condition on all open
edges between two vertices of the cluster $\C(x;Q_j)$ as well as on
all closed edges with both vertices in $Q_j$ and at least one vertex
in $\C(x;Q_j)$. Shortly, on all information needed to calculate
$\C(x;Q_j)$ precisely. Note that we do not condition on edges leading
outside of $Q_j$.

Let us briefly discuss the significance of Definition
\ref{def:globalbad}. Typically $j$ is large and $s$ is
$j^{o(1)}$. Clearly the event that $x$ is $s$-bad is unusual,
due to the power of the log being $2$ in (\ref{globalbadness}) and $7$
in (\ref{eq:defSs}). The event that $x$ is $s$-bad depends on the
status of edges in $\C(x; Q_j)$ and indirectly reveals that the
boundary of the cluster (at $\partial Q_j$) is sufficiently spread
out. This is best illustrated by the following two examples of bad
configurations.
\begin{figure}
\input{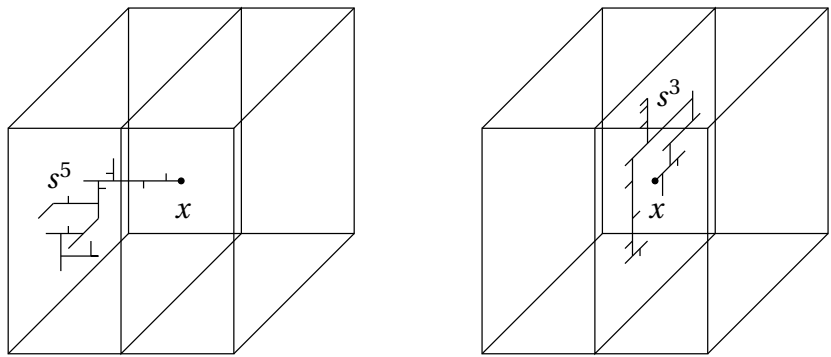}
\caption{\label{cap:2bad} Two kinds of bad configuration. On the left,
  a simple bad configuration with too many vertices inside the left
  half-cube. On the right, a bad configuration with too many vertices
  on the surface of the half-cube.}
\end{figure}

The first is a \label{pg:simplebad} ``simple'' bad configuration. See
figure \ref{cap:2bad}, left. In
this case the configuration $\C(x; Q_j)$ has a cluster of size at
least $s^4\log^7 s$ inside $(x+Q_s) \cap Q_j$ so the conditional
probability in (\ref{globalbadness}) is just $0$. The second, and more
interesting (see figure \ref{cap:2bad}, right) is when the
configuration has an excess of points on the boundary, say $s^3$ such
points. In this case, heuristically we expect
\be\label{eq:cmplbad}
\E\big(|\C(x; x+Q_s)|\;\big|\; \C(x; Q_j) \big)\approx s^5\,.
\ee
Roughly, each point on the boundary gives rise
to an expected $s^2$ points in $(x+Q_s)\setminus Q_j$, so assuming
that the part of the cluster on the boundary is sufficiently spread
out, they do not interfere negatively and you
get (\ref{eq:cmplbad}). This of course means that $x$ is bad. We shall not
justify these heuristics --- they also require some additional assumptions --- but we hope it gives the reader some intuition nonetheless. This
example shows how our definition gives information about
the behavior of the cluster $\C(x;Q_j)$ on the boundary of $Q_j$.
The alternative way, analyzing the behavior of the
cluster on the boundary explicitly, while possible, is far more
complicated.

We write $X_j^{K{\textrm{-irr}}}$ for
$$ X_j^{K{\textrm{-irr}}} = \Big | \big \{ x \in \partial Q_j : 0 \stackrel{Q_j}{\lrlong} x \and x \textrm{ is } K\textrm{-irregular} \big \} \Big | \, .$$

We are now ready to state the main theorem of this chapter.

\begin{theorem}\label{globalexploreshafan}
There exists constants $C>c>0$ such that for any $K$ sufficiently
large and any $j$ and $M$ we have
$$ \prob \Big ( X_{j} \geq M \and X_j^{K{\textrm{-irr}}} \geq  X_{j}/2
\Big ) \leq C j^d \exp(-c\log^{2}M) \, .$$
\end{theorem}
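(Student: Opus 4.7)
The plan is to decompose $K$-irregularity by spatial scale and control each scale separately. Since every $K$-irregular $x$ is $s$-bad for some $s\ge K$, and $\sum_k (k+1)^{-2}$ converges, a dyadic union bound reduces the claim to showing, for each dyadic scale $s\in[K,Cj]$, that the count $N_s$ of $s$-bad vertices on $\partial Q_j\cap\C(0;Q_j)$ satisfies
\[
\prob\bigl(X_j\ge M,\ N_s\ge cM/\log^2 M\bigr)\le C\exp(-c\log^2 M).
\]
The factor $j^d$ in the conclusion then absorbs the $O(\log j)$ loss from the union over scales together with other bookkeeping. Fix such an $s$ and condition on $\C(0;Q_j)$: this determines the set $\B$ of $s$-bad boundary vertices. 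Since $\partial Q_j$ has intrinsic dimension $d-1$, a greedy packing yields $S\subset\B$ with $|S|\ge|\B|/(Cs^{d-1})$ and pairwise disjoint boxes $\{x+Q_s\}_{x\in S}$.

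For each $x\in S$ I would introduce a localized proxy event $\mathcal{G}(x)$ for $\Ss_s(x)^c$, measurable in the edges with at least one endpoint in $x+Q_s$: concretely, the event that the component of $x$ in the union of the open edges inside $x+Q_s$ with the already revealed cluster $\C(0;Q_j)$ contains at least $s^4\log^7 s$ vertices of $x+Q_s$. Pairwise disjointness of the boxes makes $\{\mathcal{G}(x)\}_{x\in S}$ conditionally independent given $\C(0;Q_j)$, while the $s$-badness condition (with help from Lemma~\ref{connect}) should give $\prob(\mathcal{G}(x)\mid\C(0;Q_j))\ge e^{-\log^2 s}$. A Chernoff bound then produces $Y\ge\tfrac12|S|e^{-\log^2 s}$ for the number $Y$ of realized $\mathcal{G}(x)$'s, outside an event of conditional probability $\exp(-c|S|e^{-\log^2 s})$. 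On this event, additivity of disjoint contributions yields $|\C(0)|\ge c|S|\cdot s^4\log^7 s\cdot e^{-\log^2 s}$, and Barsky--Aizenman (\ref{deltais2}) gives $\prob(|\C(0)|\ge n)\le C/\sqrt{n}$; substituting $|S|\ge cM/(s^{d-1}\log^2 M)$ and optimizing, both the Chernoff tail and the Barsky--Aizenman tail are $\le C\exp(-c\log^2 M)$ uniformly in $s\ge K$ for $K$ sufficiently large, which together with the dyadic union over scales produces the claimed bound.

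The main obstacle is the conditional independence together with the lower bound $\prob(\mathcal{G}(x)\mid\C(0;Q_j))\ge e^{-\log^2 s}$. The difficulty is that the true cluster $\C(x)$ can exit a box $x+Q_s$ through $\partial Q_j$, travel through the exterior of $Q_j$, and re-enter a different box, so the bare events $\Ss_s(x)^c$ are not measurable in disjoint edge sets even when the boxes are disjoint. Passing to the localized proxy $\mathcal{G}(x)$ restores conditional independence, but one must then show that $s$-badness is strong enough to force $\mathcal{G}(x)$ (not just the global $\Ss_s(x)^c$) to have conditional probability $\ge e^{-\log^2 s}$: the anomalous mass forcing $x$ to be $s$-bad must be producible by edges inside $x+Q_s$ attached to $\C(0;Q_j)$, rather than by long excursions outside $Q_j$. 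This is the same spirit as the illustrative argument at the end of \S\ref{subsec:connprob}, and Lemma~\ref{connect} is precisely the tool for comparing the two regimes.
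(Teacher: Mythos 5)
Your endgame cannot deliver the stated bound, and this is the decisive gap. The volume you force is
$n\approx |S|\cdot s^4\log^7\!s\cdot e^{-\log^2 s}\le C\,M\,s^{5-d}\log^7\!s\le CM$ (since $d>6$ kills the $s^4$ gain), so Barsky--Aizenman (\ref{deltais2}) returns only $\prob(|\C(0)|\ge n)\le Cn^{-1/2}\ge cM^{-1/2}$. This is polynomial decay in $M$, while the theorem requires $\exp(-c\log^2 M)=M^{-c\log M}$; the superpolynomial rate is not cosmetic, since in the proof of Theorem \ref{lowmass} the bound is multiplied by $j^d$ and must be negligible against $\prob(0\lr\partial Q_j)\approx j^{-2}$. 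Any argument that converts badness into cluster volume and then invokes the volume tail is capped at $M^{-1/2}$, because the guaranteed volume is at most of order $M$. The paper never passes through the volume: it localizes badness to boxes of side $s^{4d^2}$ (Definitions \ref{def:badevents} and \ref{def:localbad}, Claims \ref{disjointpaths} and \ref{localglobal}), explores $\C(0;Q_j)$ box by box, and runs two Azuma--Hoeffding martingales (Lemmas \ref{boundaryverts} and \ref{badverts}): the a priori probability $Ce^{-c\log^4 s}$ of local badness (Lemma \ref{easyshafan}) is played against the per-box boundary production rate $ce^{-C\log^2 s}$ (Corollary \ref{twosquares}, i.e.\ Lemma \ref{connect}), and concentration over the $\gtrsim Ms^{1-d}$ explored boxes gives decay exponential in $e^{-C\log^2 s}M$ for $\log s\lesssim\log^{1/2}M$, with a plain first-moment bound (the source of the $j^d$ factor) for larger $s$. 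Your Chernoff term has the right shape for small $s$ but also fails to be uniform in $s$, so you would in any case need the same two-regime split.

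The localization step you flag as ``the main obstacle'' is also genuinely unresolved, and your proxy points the wrong way. Badness gives a \emph{lower} bound on $\prob(\neg\Ss_s(x)\mid\C(x;Q_j))$, and your $\mathcal{G}(x)$ is a \emph{sub}event of $\neg\Ss_s(x)$; to transfer the lower bound you must show that the complementary, non-local realization (mass in $x+Q_s$ reached only via excursions leaving $x+Q_s$, possibly through the exterior of $Q_j$) has conditional probability $\ll e^{-\log^2 s}$. But an upper bound on a conditional probability given $\C(x;Q_j)$ is precisely what badness denies you: a bad cluster may well achieve its anomalous mass through such excursions. The paper's $\Ss_s^{\textrm{loc}}$ is engineered so that the implication runs in the easy direction ($\neg\Ss_s\Rightarrow\neg\Ss_s^{\textrm{loc}}$, Claim \ref{disjointpaths}), at the price of working in boxes of side $s^{2d}$ and $s^{4d^2}$, where escape is cheap to rule out unconditionally via (\ref{deltais2}) and BK; the conditioning is then replaced by conditioning on the spanning clusters of the big box, which is a genuinely local object.
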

Now is the time to skip to chapter \ref{sec:lowmass}.

\subsection{Global and local regularity.} Theorem
\ref{globalexploreshafan} is the formulation needed in chapter
\ref{sec:lowmass} to prove Theorem \ref{lowmass}. It is natural to
prove such a large deviation estimate using an exploration procedure
which exploits the independence between difference boxes in the
lattice. However, the event defined in Definition \ref{def:globalbad}
is a global definition, because we need to examine the edges of the
entire cluster $\C(x; Q_j)$ in order to determine it. To that aim, we
define local events which can be determined by observing boxes of side
length polynomial in $s$.

\begin{definition} \label{def:badevents} For $x \in \partial Q_j$ and a positive integer $s$ we say that the event $\Ss_s^\textrm{loc}(x)$ occurs if the following two happen:
\begin{enumerate}
\item[(a)] For all $y \in x+Q_s$,
\[
\left|\C(y;x+Q_{s^{2d}})\cap (x+Q_s)\right|<s^4\log^4s\;\textrm{; and}
\]
\item[(b)] There exists at most $\log^3 s$ disjoint open paths
starting in $x+Q_s$ and ending at $x+\partial Q_{s^{2d}}$.
\end{enumerate}
\end{definition}
Note that in (a) we are interested in points in $x+Q_s$ but we allow
the connecting paths to traverse in a much larger set --- $x+Q_{s^{2d}}$ --- but not unlimited. We immediately note
\begin{claim} \label{disjointpaths}
For any $x\in\Z^d$ and positive integer $s$,
$$ \Ss_s^\textrm{loc}(x) \Longrightarrow \Ss_s(x) \, .$$
\end{claim}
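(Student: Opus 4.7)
The plan is to partition $\C(x)\cap (x+Q_s)$ according to connectivity in the open subgraph restricted to the larger box $x+Q_{s^{2d}}$, using condition~(a) to bound the size of each piece and condition~(b) to bound the number of pieces.

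First I would enumerate the distinct connected components $C_1,\dotsc, C_N$ of the restricted open subgraph that both meet $x+Q_s$ and are contained in $\C(x)$, with $C_1$ chosen to be the component of $x$. Every vertex of $\C(x)\cap (x+Q_s)$ lies in exactly one $C_i$, and for any representative $y_i\in C_i\cap (x+Q_s)$ one has $C_i\cap (x+Q_s)=\C(y_i; x+Q_{s^{2d}})\cap (x+Q_s)$, so condition~(a) gives $|C_i\cap(x+Q_s)|<s^4\log^4 s$ for every $i$. Summing over $i$,
\[
|\C(x)\cap(x+Q_s)| < N\, s^4 \log^4 s.
\]

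The key step will be to bound $N$ using (b). For each $i\geq 2$, the component $C_i$ is contained in $\C(x)$ but does not contain $x$, so every open path in $\Z^d$ from a chosen vertex $v_i\in C_i\cap (x+Q_s)$ to $x$ must exit $x+Q_{s^{2d}}$. Tracing such a path up to the last vertex $w_i$ before its first exit, the sub-path from $v_i$ to $w_i\in x+\partial Q_{s^{2d}}$ is an open path lying entirely inside $C_i$. Since distinct components of the restricted subgraph are vertex-disjoint, the resulting $N-1$ open paths from $x+Q_s$ to $x+\partial Q_{s^{2d}}$ are pairwise vertex-disjoint, and (b) then forces $N-1\leq \log^3 s$. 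Substituting back yields
\[
|\C(x)\cap (x+Q_s)| < (\log^3 s+1)\, s^4 \log^4 s \leq s^4 \log^7 s
\]
for all $s$ sufficiently large, which is exactly $\Ss_s(x)$.

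The only place where any care is required is the construction of the disjoint boundary paths, and that comes for free from the fact that distinct components of the restricted graph share no vertices. The numerical matching $4+3=7$ in the exponents is by design; the mild overshoot from the ``$+1$'' is harmless because this implication will only ever be invoked for $s\geq K$ with $K$ large (as in Definition~\ref{def:globalbad}), so the additive $s^4\log^4 s$ is dominated by the target $s^4\log^7 s$.
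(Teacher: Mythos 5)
Your proof is correct and is essentially the paper's own argument: the paper partitions $\C(x)\cap(x+Q_s)$ into equivalence classes (connectivity within $x+Q_{s^{2d}}$), bounds each class by condition (a) and the number of classes by condition (b) via the same disjoint-boundary-path construction. You are in fact slightly more careful than the paper about the extra class containing $x$ itself (the ``$+1$''), which, as you note, is immaterial since the claim is only ever used for $s\geq K$ large and all downstream bounds have ample slack.
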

\begin{proof}
Indeed, assume to the contrary that $$|\C(x) \cap (x+Q_s)| \geq s^4
\log^7 s \, ,$$ and let $X=\C(x)\cap (x+Q_s)$. We say that two vertices in $X$
are equivalent if there is an open path connecting them which does
not exit $x+Q_{s^{2d}}$. Due to $(a)$ from the definition of
$\Ss_s(x)$, each equivalence class contains at most $s^4\log^4s$
vertices. Due to $(b)$, there are no more than $\log^3 s$ equivalence classes, since each class requires its own path from $x+Q_s$ to the outside of $x+Q_{s^{2d}}$ and all these paths are disjoint.
\end{proof}

With this local version of $\Ss_s$, we are ready to give a local
definition of badness.


\begin{definition} \label{def:localbad} For $x \in \partial Q_j$ and positive integers $s$ and $K$ we define the following.
\begin{enumerate}

\item We say that a cluster $\C$ in $B:=(x+Q_{s^{4d^2}}) \cap Q_j$ is a
``spanning cluster'' if $\C\cap Q_j$ intersects both
$x+\partial Q_{s^{4d^2}}$ and $x+\partial Q_{s^{2d}}$. See figure
\ref{cap:spanning}. For notational convenience, we will also consider
the cluster of $x$ as spanning even if it does not span anything.
\begin{figure}
\input{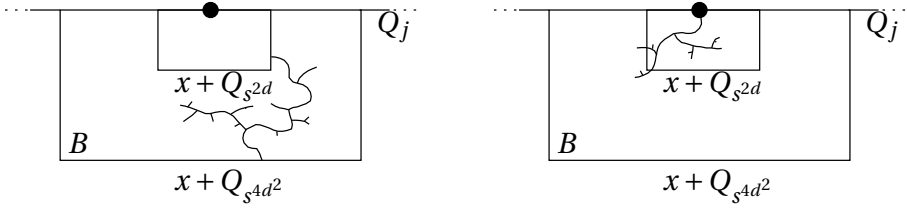}
\caption{\label{cap:spanning} Spanning clusters. On the left, a
  cluster spanning from the outer boundary to the inner. On the
  right, the cluster containing $x$. }
\end{figure}

\item We say that $x$ is $s$-locally-bad if there exists spanning clusters $\C_1, \ldots, \C_m$ in $B$ such that
\be\label{badness} \prob \Big ( \Ss_s^\textrm{loc}(x) \, \mid \, \C_1, \ldots, \C_m  \Big ) \le 1-e^{-\log^2 s} \, .\ee

\item We say that $x\in \partial Q_j$ is $K$-locally-irregular if
there exists $s \geq K$ such that $x$ is $s$-locally-bad. Otherwise we say that $x$ is $K$-locally-regular.
\end{enumerate}
\end{definition}
\noindent The importance of this definition is the fact that the event that $x$ is $s$-locally-bad is determined by the status of the edges in the box $(x+Q_{s^{4d^2}}) \cap Q_j$. Let us proceed with observing that global goodness is implied by its local counterpart. We say that $x$ is $s$-good ($s$-locally-good) if it is not $s$-bad ($s$-locally-bad).

\begin{claim} \label{localglobal} For any $x \in \partial Q_j$ and a positive integer $s$ we have that if $x$ is $s$-locally-good, then $x$ is $s$-good.
\end{claim}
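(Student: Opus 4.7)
The proof is naturally by contrapositive: assuming $x$ is $s$-bad, I will exhibit spanning clusters $\C_1, \ldots, \C_m$ in $B$ witnessing that $x$ is $s$-locally-bad. The starting point is Claim~\ref{disjointpaths}, which gives $\Ss_s^{\textrm{loc}}(x) \Rightarrow \Ss_s(x)$, so that $\Ss_s(x)^c \subseteq \Ss_s^{\textrm{loc}}(x)^c$. Hence, letting $A$ be the realization of $\C(x; Q_j)$ witnessing $s$-badness, namely $\prob(\Ss_s(x)^c \mid \C(x; Q_j) = A) \ge e^{-\log^2 s}$, this bound upgrades immediately to
\[
\prob(\Ss_s^{\textrm{loc}}(x)^c \mid \C(x; Q_j) = A) \ge e^{-\log^2 s}.
\]

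Next I would read off the candidate spanning clusters from $A$: let $\C_1$ be the connected component of $x$ in $A \cap B$ using only $B$-edges (by the convention in Definition~\ref{def:localbad}(i), this is always considered a spanning cluster of $B$ even if it fails to reach the boundaries), and let $\C_2, \ldots, \C_m$ be the remaining $B$-components of $A \cap B$ that happen to span $B$. These clusters are entirely determined by $A$.

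The main step is to transfer the bound above to the coarser conditioning on the spanning clusters alone,
\[
\prob(\Ss_s^{\textrm{loc}}(x)^c \mid \C_1, \ldots, \C_m) \ge e^{-\log^2 s}.
\]
The guiding observation is that $\Ss_s^{\textrm{loc}}(x)$ depends only on edges in $x + Q_{s^{2d}}$; since $s^{2d} \le s^{4d^2}$, the $Q_j$-portion of these edges lies entirely inside $B$, while the rest lies outside $Q_j$ and is therefore independent of $A$. The extra information carried by $\{\C(x; Q_j) = A\}$ beyond $\{\C_1, \ldots, \C_m\}$ splits into (i) data about edges in $Q_j \setminus B$, which are disjoint from $x + Q_{s^{2d}}$ and hence independent of $\Ss_s^{\textrm{loc}}(x)^c$, and (ii) data about non-spanning $B$-components of $A \cap B$. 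A conditional-independence / disintegration argument showing that this residual information does not lower the conditional probability of $\Ss_s^{\textrm{loc}}(x)^c$ then yields the displayed inequality and proves $x$ is $s$-locally-bad.

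I expect the main obstacle to be in handling category (ii): non-spanning $B$-components of $A \cap B$ may lie partially inside $x + Q_{s^{2d}}$ and so might, a priori, influence the conditional distribution of $\Ss_s^{\textrm{loc}}(x)^c$. Ruling this out cleanly seems to require either a monotonicity argument (exploiting that $\Ss_s^{\textrm{loc}}(x)^c$ is an increasing event) or a careful decomposition separating the randomness feeding $\Ss_s^{\textrm{loc}}(x)^c$ from the randomness governing the non-spanning $B$-structure of $A \cap B$.
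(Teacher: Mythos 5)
Your skeleton matches the paper's proof exactly: argue by contraposition, use Claim~\ref{disjointpaths} to pass from $\Ss_s(x)$ to $\Ss_s^\textrm{loc}(x)$, read the spanning clusters off the realization $A$ of $\C(x;Q_j)$, and then transfer the conditional bound from $\C(x;Q_j)=A$ to the coarser conditioning on $\C_1,\dotsc,\C_m$ via locality of $\Ss_s^\textrm{loc}(x)$ in $x+Q_{s^{2d}}$. But the step you flag as ``the main obstacle'' --- item (ii), the possibility that non-spanning $B$-components of $A\cap B$ intersect $x+Q_{s^{2d}}$ and contaminate the conditioning --- is precisely the point the proof must settle, and you leave it unresolved, suggesting a monotonicity or disintegration argument that is not what is needed (and would be awkward, since conditioning on $\C(x;Q_j)=A$ reveals closed boundary edges as well as open ones, so it is not a monotone conditioning).

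The missing observation is purely geometric and is why the two nested boxes $x+Q_{s^{2d}}\subset x+Q_{s^{4d^2}}$ appear in Definition~\ref{def:localbad} in the first place: \emph{a non-spanning component of $\C(x;Q_j)\cap B$ cannot meet $x+Q_{s^{2d}}$ at all}. Indeed, any component of $\C(x;Q_j)\cap B$ other than the one containing $x$ connects to $x$ inside $Q_j$ only by exiting $B$, hence it touches $x+\partial Q_{s^{4d^2}}$; if it also met $x+Q_{s^{2d}}$, connectivity would force it to cross $x+\partial Q_{s^{2d}}$, making it spanning by definition (and the component of $x$ is declared spanning by convention). Consequently
\[
\C(x;Q_j)\cap(x+Q_{s^{2d}})=\bigcup_{i=1}^m \C_i\cap(x+Q_{s^{2d}})\,,
\]
so the conditional probability of $\Ss_s^\textrm{loc}(x)$ given $\C(x;Q_j)=A$ coincides with that given $\C_1,\dotsc,\C_m$ by locality alone --- your category (ii) is empty as far as the relevant edges are concerned, and no further argument is required. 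Without this observation the proof is incomplete; with it, it is the paper's proof.
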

\begin{proof}
Assume to the contrary that
$x$ is $s$-bad i.e. that
\[
\prob(\Ss_s(x)\,\mid\,\C(x;Q_j))\le 1-\exp(-\log^2 s)\,.
\]
By claim \ref{disjointpaths}, $\Ss_s^\textrm{loc}\Longrightarrow\Ss_s$
therefore
\[
\prob(\Ss_s^\textrm{loc}(x)\,\mid\,\C(x;Q_j))\le 1-\exp(-\log^2 s)\,.
\]
Now, the event $\Ss_s^\textrm{loc}(x)$ depends only on what happens in
$\C(x;Q_j)\cap(x+Q_{s^{2d}})$ so
\[
\prob(\Ss_s^\textrm{loc}(x)\,\mid\,\C(x;Q_j)) =
\prob(\Ss_s^\textrm{loc}(x)\,\mid\,\C(x;Q_j)\cap(x+Q_{s^{2d}}))\,.
\]
Examine now the cluster in the bigger box $x+Q_{s^{4d^2}}$ and write
it as a union of its components,
\[
\C(x;Q_j)\cap(x+Q_{s^{4d^2}})=\C_1\cup\C_2\cup\dotsc
\]
By definition, only the spanning clusters intersect the smaller box
$x+Q_{s^{2d}}$. Assume the spanning clusters are
$\C_1,\dotsc,\C_m$. We get
\[
\C(x;Q_j)\cap(x+Q_{s^{2d}})=\bigcup_{i=1}^m \C_i\cap(x+Q_{s^{2d}})\,.
\]
so
\begin{align*}
\prob(\Ss_s^\textrm{loc}(x)\,\mid\,\C(x;Q_j)) &=
\prob(\Ss_s^\textrm{loc}(x)\,\mid\,\C_1\cap(x+Q_{s^{2d}}),\dotsc,\C_m\cap(x+Q_{s^{2d}}))\,.
\intertext{and again by locality this equals}
&\prob(\Ss_s^\textrm{loc}(x)\,\mid\,\C_1,\dotsc,\C_m)\,.
\end{align*}
Hence we get that $x$ is $s$-locally-bad (with these
$\C_1,\dotsc,\C_m$), in contradiction.
\end{proof}

\noindent We write $X_j^{K{\textrm{-loc-irr}}}$ for
$$ X_j^{K{\textrm{-loc-irr}}} = \Big | \big \{ x \in \partial Q_j : 0 \stackrel{Q_j}{\lrlong} x \and x \textrm{ is } K\textrm{-locally-irregular} \big \} \Big | \, .$$


\noindent We will spend the rest of this chapter in proving the following theorem.

\begin{theorem}\label{localexploreshafan}
There exists constants $C>c>0$ such that for any $K$ sufficiently
large and any $j$ and $M$ we have
$$ \prob \Big ( X_{j} \geq M \and X_j^{K{\textrm{-loc-irr}}} \geq  X_{j}/2
\Big ) \leq C j^d \exp(-c\log^{2}M) \, .$$
\end{theorem}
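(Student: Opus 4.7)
The proof of Theorem \ref{localexploreshafan} proceeds by an exploration combined with a concentration argument, crucially exploiting the locality built into Definition \ref{def:localbad}: the event that $x$ is $s$-locally-bad is measurable with respect to the edges in $(x+Q_{s^{4d^2}})\cap Q_j$.

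The first step is a pointwise estimate: for any deterministic vertex $x$ and scale $s\ge K$,
\[ \prob(x\text{ is }s\text{-locally-bad})\le e^{-c\log^{3} s}. \]
This follows by averaging the definition. If $x$ is $s$-locally-bad, then there exist spanning clusters $\C_1,\ldots,\C_m$ with $\prob(\neg\Ss_s^\textrm{loc}(x)\mid\C_1,\ldots,\C_m)\ge e^{-\log^2 s}$, and taking unconditional expectation yields $\prob(\neg\Ss_s^\textrm{loc}(x))\ge e^{-\log^2 s}\prob(x\text{ is }s\text{-locally-bad})$. The unconditional tail $\prob(\neg\Ss_s^\textrm{loc}(x))$ is super-polynomially small by standard BK and moment arguments using (\ref{tpt}): one controls the first moment of $|\C(y;x+Q_{s^{2d}})\cap(x+Q_s)|$ by $Cs^2$ and applies the exponential tail (\ref{eq:expo}) after a union bound over $y\in x+Q_s$, and similarly bounds the number of disjoint crossings from $x+Q_s$ to $x+\partial Q_{s^{2d}}$ via BK iterated $\log^3 s$ times. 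Summing the pointwise estimate over $s\ge K$ shows that the unconditional probability of being $K$-locally-irregular is at most $e^{-c\log^{3} K}$.

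The second step converts this into concentration via an exploration. For each fixed scale $s\ge K$, partition $\Z^d$ into a grid of cells of side $L_s\approx 3s^{4d^2}$, chosen so that $s$-local-badness events at representatives of distinct cells depend on disjoint edge sets. Explore $\C(0;Q_j)$ cell-by-cell in a canonical BFS order; each time a new cell is entered and produces a boundary vertex $y\in\partial Q_j$ connected to $0$, test whether $y$ is $s$-locally-bad using the freshly-exposed edges in $y+Q_{s^{4d^2}}$. By locality this test is independent of the prior exploration, so the pointwise bound applies conditionally, and a Chernoff/binomial estimate gives that on $\{X_j\ge M\}$ the probability of an abnormally large fraction of cell-representatives being $s$-locally-bad is $\le\exp(-c\log^{2} M)$. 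Union-bounding over scales $s\in[K,j]$ (for larger $s$ the box $x+Q_{s^{4d^2}}$ engulfs $Q_j$ and the spanning-cluster condition in Definition \ref{def:localbad} fails vacuously) costs an $O(j)$ factor, and the $j^d$ in the final bound absorbs cell-alignment choices, the sum over scales, and further polynomial losses.

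The main technical difficulty lies in Step 2: when two discovered boundary vertices fall in the same cell, or when the BFS has already revealed some edges inside the test region $y+Q_{s^{4d^2}}$, extracting genuinely fresh randomness becomes delicate. The block partitioning handles inter-cell independence automatically, but the residual bias from conditioning on the past exploration within a cell must be controlled using the regularity principle of \S\ref{subsec:connprob} driven by Lemma \ref{connect}, which ensures that connection-based conditionings lose at most quasi-polynomial factors of the form $e^{C\log^2 s}$ --- a loss easily absorbed into the $e^{-c\log^3 s}$ slack from the pointwise estimate.
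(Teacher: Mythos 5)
Your proposal has the right high-level architecture --- a pointwise local-badness estimate, followed by an exploration of $\C(0;Q_j)$ in boxes of side $\approx s^{4d^2}$, followed by a concentration bound --- and this matches the paper's strategy. However, the key technical steps are either missing or incorrectly identified, and as written the proof does not close.

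First, your Step~1 averaging argument is flawed. You write that if $x$ is $s$-locally-bad there exist spanning clusters $\C_1,\dotsc,\C_m$ with $\prob(\neg\Ss_s^\textrm{loc}(x)\mid\C_1,\dotsc,\C_m)\ge e^{-\log^2 s}$, and that ``taking unconditional expectation'' yields $\prob(\neg\Ss_s^\textrm{loc}(x))\ge e^{-\log^2 s}\prob(x\text{ is }s\text{-locally-bad})$. But the choice of spanning clusters $\C_1,\dotsc,\C_m$ depends on the configuration, so this is not a conditional probability with respect to a fixed $\sigma$-algebra, and the tower property does not apply. The paper (Lemma~\ref{easyshafan}) handles this by first bounding the probability of having more than $\log^3 s$ spanning clusters, then union-bounding over all $2^{\log^3 s}$ subsets of them; this is an essential step and cannot be skipped.

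Second, and most seriously, you never supply the mechanism that converts ``few bad \emph{cells}'' into ``few bad \emph{boundary vertices}.'' The exploration tests one box per step, so it controls the count of bad boxes relative to the number $\tau$ of explored boxes; but $X_j^{K\textrm{-loc-irr}}$ and $X_j$ count boundary vertices, of which each box can contain $\sim s^{Cd}$. The paper closes this gap via a second martingale (Lemma~\ref{boundaryverts}): each time a box touching $\partial Q_j$ is explored, the probability it produces a fresh boundary vertex of $\C(0;Q_j)$ is at least $ce^{-C\log^2 s}$ --- and \emph{this} is what Lemma~\ref{connect} / Corollary~\ref{twosquares} is used for. Azuma--Hoeffding then gives $X_j\ge ce^{-C\log^2 s}\tau$ with high probability on $\{X_j\ge M\}$, which lets you compare $\tau$ to $X_j$. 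You mention Lemma~\ref{connect} but assign it the wrong role (controlling ``residual bias from conditioning''), and you flag the box-versus-vertex issue as ``delicate'' without resolving it. Without this two-martingale comparison the argument is incomplete.

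Third, your stated bound does not come from where you say it does. A Chernoff/Azuma bound on an exploration of length $\gtrsim M$ would give $\exp(-c\sqrt{M})$ (as it does in the paper for scales $s\le s_0$ with $\log s_0\approx\sqrt{\log M}$). The factor $\exp(-c\log^2 M)$ in the final statement arises from the \emph{large-scale} regime $s>s_0$, where one abandons the exploration and instead union-bounds the pointwise badness estimate over all $x\in Q_j$; this is also the real source of the $j^d$ prefactor (not ``cell-alignment choices''). In short: the paper's proof requires splitting at $s_0$, running the two-martingale argument below $s_0$, and a direct union bound above $s_0$; your proposal identifies neither the split nor the correct source of either factor in the final bound.
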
 \medskip

\begin{proof}[Proof of Theorem \ref{globalexploreshafan}.] This
follows directly from Theorem \ref{localexploreshafan} and Claim
\ref{localglobal}.
\end{proof}



\subsection{An easy large deviation estimate.}
Our aim in this section is to prove the following lemma, which will be crucial for the proof of Theorem \ref{localexploreshafan}.
\begin{lemma} \label{easyshafan} For $x \in \partial Q_j$ and positive integers $s$ we have
\[
\prob(x\mbox{ is }s\mbox{-locally-bad}) \le Ce^{-c\log^4 s}\,.
\]
\end{lemma}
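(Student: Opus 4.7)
The plan is to apply Markov's inequality at the conditional level, reducing to an unconditional tail bound on $\neg\Ss_s^{\textrm{loc}}(x)$, and then to handle its two clauses separately.

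By Definition \ref{def:localbad}, $x$ is $s$-locally-bad precisely when the random variable $P:=\prob(\neg\Ss_s^{\textrm{loc}}(x)\mid\C_1,\dotsc,\C_m)$, measurable with respect to the spanning-cluster configuration in $B$, is at least $e^{-\log^2 s}$. Since $\E P=\prob(\neg\Ss_s^{\textrm{loc}}(x))$ by the tower property, Markov's inequality applied to $P$ yields
\[\prob(x\textrm{ is }s\textrm{-locally-bad})\le e^{\log^2 s}\,\prob(\neg\Ss_s^{\textrm{loc}}(x)),\]
so it suffices to establish $\prob(\neg\Ss_s^{\textrm{loc}}(x))\le Ce^{-c\log^4 s}$; the $e^{\log^2 s}$ prefactor is then absorbed for $s$ large.

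For clause (a), observe that $\C(y;x+Q_{s^{2d}})\cap(x+Q_s)\subseteq\C(y)\cap(y+Q_{2s})$ for any $y\in x+Q_s$. The Aizenman-Newman moment method behind (\ref{eq:expo}) extends, with the exponent on $\log$ matched in both places, to $\prob(|\C(y;y+Q_{2s})|\ge s^4\log^k s)\le Ce^{-c\log^k s}$ for every $k\ge 1$; taking $k=4$ and union-bounding over the $\le (2s+1)^d$ choices of $y$ gives $\prob(\neg(\mathrm{a}))\le Cs^d e^{-c\log^4 s}$, which is of the desired order.

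For clause (b), the failure event is the existence of more than $\log^3 s$ edge-disjoint open paths from $x+Q_s$ to $x+\partial Q_{s^{2d}}$. Let $\A$ denote the increasing event that at least one such path exists; then $\neg(\mathrm{b})$ is contained in the $(\log^3 s)$-fold disjoint occurrence $\A^{\circ\log^3 s}$, and the BK inequality gives $\prob(\neg(\mathrm{b}))\le\prob(\A)^{\log^3 s}$. To control $\prob(\A)$, I would invoke the intrinsic one-arm exponent $\rho'=1$ from \cite{KN}: because intrinsic distance dominates Euclidean distance, $\prob(y\lr\partial Q_{s^{2d}})\le C/s^{2d}$, and a union bound over $y\in x+Q_s$ yields $\prob(\A)\le C/s^d$. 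Raising to the $\log^3 s$ power then produces $\prob(\neg(\mathrm{b}))\le(C/s^d)^{\log^3 s}\le Ce^{-c\log^4 s}$. The principal obstacle is exactly this one-arm upper bound: the classical estimate $\prob(0\lr\partial Q_R)\le C/\sqrt R$ obtained directly from (\ref{deltais2}) only gives $\prob(\A)\le C$ after the $s^d$-sized union bound, which is far too weak to iterate BK to the power $\log^3 s$.
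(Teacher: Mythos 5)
Your Markov-at-the-conditional-level idea is exactly the paper's core idea, and your treatment of clause (a) matches the paper's (which packages the required moment bound as Lemma \ref{volld}). But there is a genuine gap in your opening reduction. Definition \ref{def:localbad} declares $x$ to be $s$-locally-bad if \emph{there exists some subset} $\C_1,\dotsc,\C_m$ of the spanning clusters of $B$ with $\prob(\Ss_s^{\textrm{loc}}(x)\mid\C_1,\dotsc,\C_m)\le 1-e^{-\log^2 s}$; this is not the event that a single random variable $P$, obtained by conditioning on the whole spanning-cluster configuration, exceeds $e^{-\log^2 s}$. Different subsets generate different $\sigma$-algebras, each with its own exceptional set, so the Markov/tower argument must be run once per subset and then combined by a union bound over subsets. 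For that union bound to be affordable one must first show that, outside an event of probability $\le Ce^{-c\log^4 s}$, there are at most $\log^3 s$ spanning clusters (hence at most $2^{\log^3 s}$ subsets); the paper proves this with a separate BK plus (\ref{deltais2}) computation. Your proposal omits this step entirely, and without it the number of subsets is uncontrolled and the argument does not close.

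Separately, for clause (b) the ``principal obstacle'' you identify is illusory, and the tool you import to beat it is unnecessary. Any open path from $x+Q_s$ to $x+\partial Q_{s^{2d}}$ must pass through $x+\partial Q_s$, so the union bound need only run over the $\approx s^{d-1}$ boundary points; combined with $\prob(y\lr x+\partial Q_{s^{2d}})\le\prob(|\C(y)|>s^{2d}-s)\le Cs^{-d}$, which follows from (\ref{deltais2}) alone, this gives that your event $\A$ has $\prob(\A)\le C/s$, and then $(C/s)^{\log^3 s}\le Ce^{-c\log^4 s}$ as required. The intrinsic one-arm bound from \cite{KN} is a valid but much heavier input which the paper does not need at this point.
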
 \medskip

\noindent In order to prove this lemma, we begin with a large deviation lemma.
\begin{lemma} \label{volld}
There exists some constant $c>0$ such that for all $s>0$ and
$\lambda>0$ we have
$$
\prob ( \max_{y\in Q_s}|\C(y)\cap Q_s| > \lambda s^4 ) \leq
  s^{d-6}  e^{-c \lambda} \, .
$$
\end{lemma}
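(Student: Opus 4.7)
The plan is to combine a per-vertex tail estimate with a cluster-counting identity. The main obstacle will be establishing a single per-vertex bound carrying both a heavy-tail prefactor and an exponential cutoff.

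\textbf{Step 1 (per-vertex tail).} For each fixed $y\in Q_s$ and every $\lambda\ge 1$, I would establish a pointwise estimate of the form
\[
\prob\bigl(|\C(y)\cap Q_s|>\lambda s^4\bigr)\;\le\;\frac{C\,e^{-c\lambda}}{s^2}.
\]
The polynomial prefactor $1/s^2$ is essentially free from \eqref{deltais2}: since $|\C(y)\cap Q_s|\le|\C(y)|$ and $\prob(|\C(y)|\ge\lambda s^4)\le C/\sqrt{\lambda s^4}$, the factor $1/s^2$ is obtained for $\lambda\ge 1$. The exponential factor $e^{-c\lambda}$ is the classical Aizenman--Newman finite-volume tail (cf.\ \eqref{eq:expo}): the volume of a critical cluster restricted to a box of diameter $s$ concentrates at the characteristic scale $s^4$ and decays exponentially beyond. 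The two factors are then glued together, e.g.\ by conditioning on the rare event $\{|\C(y)|\ge s^4\}$ and showing that conditionally the excess in $Q_s$ has exponential tails in units of $s^4$; alternatively by the tree-graph moment method together with the two-point function bound \eqref{tpt}.

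\textbf{Step 2 (cluster counting).} Let $\mathcal M$ be the number of distinct open clusters $\C$ with $|\C\cap Q_s|>\lambda s^4$. Each such cluster contains more than $\lambda s^4$ vertices in $Q_s$, so it is captured by the identity
\[
\mathcal M\;=\;\sum_{y\in Q_s}\frac{\mathbf 1_{|\C(y)\cap Q_s|>\lambda s^4}}{|\C(y)\cap Q_s|}\;\le\;\frac{1}{\lambda s^4}\sum_{y\in Q_s}\mathbf 1_{|\C(y)\cap Q_s|>\lambda s^4}.
\]
Taking expectations and using Step 1,
\[
\E\mathcal M\;\le\;\frac{|Q_s|}{\lambda s^4}\cdot\frac{C\,e^{-c\lambda}}{s^2}\;\le\;\frac{C s^{d-6}}{\lambda}\,e^{-c\lambda}.
\]
Since the event $\{\max_{y\in Q_s}|\C(y)\cap Q_s|>\lambda s^4\}$ is precisely $\{\mathcal M\ge 1\}$, Markov's inequality yields
\[
\prob\Bigl(\max_{y\in Q_s}|\C(y)\cap Q_s|>\lambda s^4\Bigr)\;\le\;\E\mathcal M\;\le\; C s^{d-6}\,e^{-c\lambda},
\]
absorbing the $1/\lambda$ for $\lambda\ge 1$; for smaller $\lambda$ the assertion is vacuous since $s^{d-6}\ge 1$ when $d>6$.

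The main obstacle is Step 1. Neither input alone suffices: the heavy-tail bound from \eqref{deltais2} gives only polynomial decay $s^{d-6}/\lambda^{3/2}$ after the counting step, while a pure exponential per-vertex bound loses the prefactor $1/s^2$ and leaves an unwanted $s^{d-4}$. The technical heart of the proof is therefore the Aizenman--Newman finite-volume machinery that produces the product $e^{-c\lambda}/s^2$; this is precisely where the dimension assumption $d>6$ and the two-point function estimate \eqref{tpt} enter.
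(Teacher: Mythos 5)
Your argument is correct in outline and, once Step 1 is actually carried out, rests on exactly the same machinery as the paper's proof; the difference is one of packaging. The paper simply quotes Aizenman's diagrammatic moment bound for the maximum, $\E(\C_{\max}^k)\le k!\,C_1^k s^{d-6+4k}$ (\cite[\S 4.3, lemma 2]{A}), and applies Markov's inequality to $\C_{\max}^k$ with $k\approx\lambda$; you instead prove a per-vertex tail and then count clusters. Your Step 2 is clean and is in fact how Aizenman's bound for the maximum is deduced from per-vertex moments (summing $|\C(y)\cap Q_s|^{k-1}$ over $y\in Q_s$ counts each cluster with weight $|\C\cap Q_s|^{k}$), so the two routes coincide at the level of the key input.

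The one point you must be careful about is how you establish Step 1. Of the two options you float, the conditioning route is not viable as stated: conditioning on $\{|\C(y)|\ge s^4\}$ does not yield any obvious conditional independence, and there is no off-the-shelf statement that ``the excess in $Q_s$ has exponential tails'' conditionally on that event. The tree-graph route, however, works and is the one to carry out: by the Aizenman--Newman tree-graph inequalities together with \eqref{tpt} one gets $\E\bigl(|\C(y)\cap Q_s|^k\bigr)=\sum_{x_1,\dots,x_k\in Q_s}\prob(y\lr x_1,\dots,y\lr x_k)\le k!\,C^k s^{4k-2}$ (the case $k=2$ is Lemma \ref{lem:xyz}; this is where $d>6$ enters, to make the triangle-type sums converge). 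Markov's inequality applied to the $k$-th power with $k=\lambda/2C$ then gives precisely $\prob(|\C(y)\cap Q_s|>\lambda s^4)\le s^{-2}(Ck/\lambda)^k= s^{-2}2^{-\lambda/2C}$, which is your Step 1 bound with both the $s^{-2}$ prefactor and the exponential cutoff produced simultaneously. Note that the needed estimate cannot be obtained by ``gluing'' \eqref{deltais2} to a separate exponential bound; it must come out of a single moment calculation, exactly as in the paper's citation of \cite{A}.
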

\begin{proof} Denote this maximum by $\C_{\max}$. Our lemma is a
well-known corollary of the so-called ``diagrammatic bounds''. A
convenient reference is \cite[\S 4.3, lemma 2, eq.\ (4.12)]{A}. It
states that
\[
\E(\C_{\max}^k)\le k! C_1^k s^{d-6+4k}\, ,
\]
(Aizenman's $\eta$ is simply $0$ in our case). Using this with
$k=\lambda/2C_1$ gives
\begin{align*}
\prob(\C_{\max}>\lambda s^4)&
  =\prob\left(\C_{\max}^k>\left(\lambda s^4\right)^k\right)
  \le \frac{\E(\C_{\max}^k)}{\left(\lambda s^4\right)^k}
  \le k!s^{d-6}\left(\frac{C_1}{\lambda}\right)^k\\
&\le s^{d-6}\left(\frac{C_1k}{\lambda}\right)^k
  = s^{d-6} 2^{-\lambda/2C_1}.\qedhere
\end{align*}
\end{proof}


\begin{proof}[Proof of lemma \ref{easyshafan}] Indeed, by Lemma
\ref{volld}, the probability of $(a)$ from definition \ref{def:badevents} of
$\Ss_s(x)$ is at most $Ce^{-c\log^4 s}$. As for (b), by the volume estimate (\ref{deltais2}) we see that
\begin{eqnarray*}
\lefteqn{\prob\big(x+Q_s\lrlong x+\partial Q_{s^{2d}}\big)
  \le} & & \\
&\qquad &\le \sum_{y\in x+\partial Q_s}\prob(y\lrlong x+\partial
Q_{s^{2d}})\le\\
&& \le \sum_{y\in x+\partial Q_s}\prob(|\C(y)|>s^{2d}-s)\le\\
\mbox{by (\ref{deltais2})}&& \le \sum_{y\in x+\partial Q_s}
  C\left(s^{2d}-s\right)^{-1/2} \le \\
&&\le Cs^{d-1}\cdot Cs^{-d}=C/s\,.
\end{eqnarray*}
We now apply the BK inequality and we get that the probability of (b) is at most $(C/s)^{\log^3 s}\leq Ce^{-c\log^4 s}$. We deduce that
\be\label{badcube.deviate} \prob \big ( \Ss_s(x) \big ) \ge 1- Ce^{-c\log^4 s} \, .\ee
Similarly to above, for any $y \in x+\partial Q_{s^{2d}}$ by (\ref{deltais2}) we have that
$$ \prob (y \lr x+\partial Q_{s^{4d^2}}) \leq Cs^{-2d^2} \, ,$$
whence the probability that there exists more than $\log^3 s$ spanning clusters (as in Definition \ref{def:localbad}) is at most
$$ \left(Cs^{2d^2-1}\right)^{\log ^3 s} \left(Cs^{-2d^2}\right)^{\log ^3 s} \leq C e^{-c\log^4 s} \, ,$$ since we have $Cs^{2d^2-1}$ possible choices of $y$.

If $x$ is $s$-locally-bad, then either there are at least $\log^3 s$
spanning clusters, or there are at most $\log^3 s$ spanning clusters
and a certain subset of them $\C_1, \ldots, \C_m$ (we assume here these spanning clusters are numbered in some arbitrary fashion) has the property that
\be\label{clustersubset} \prob \big ( \Ss_s(x) \, \mid \, \C_1, \ldots
\C_m  \big ) \le 1- e^{-\log^2 s} \, .\ee
However, for each such subset, by (\ref{badcube.deviate}) we have that
$$ \E \,\, \prob \big ( \neg\Ss_s(x) \, \mid \,
\C_1, \ldots \C_m  \big )  \le Ce^{-c\log^4 s} \, .$$
Thus, by Markov inequality, the probability that $\C_1, \ldots, \C_m$
have the property (\ref{clustersubset}) is at most $C\exp(\log^2
  s-c\log^4 s)\le C\exp({-c\log^4 s})$. We conclude the proof using
  the union bound, since $2^{\log ^3 s}\cdot Ce^{-c\log^4 s} \leq Ce^{-c\log^4 s}$.
\end{proof}




\newcommand{\perhapswillbeback}
{
\noindent {\bf Proof of part $(1)$ of Theorem \ref{shafan}.}
\label{page:prf113}
Denote $Q=Q_{s^{2d+1}}$. We first assume that
$w \not \in x+Q$ and that $s^{2d+1}<r$ (this is the more interesting case). We
now use Proposition \ref{disjointpaths}
and get an inclusion of events
\[
\{Y_{s,r}>s^4\log^5 s\}\subset  (a)\cup (b)
\]
where the events $(a)$ and $(b)$ are correspond to the two clauses in Proposition
\ref{disjointpaths}. The advantage of this presentation is that both
$(a)$ and $(b)$ depend only on the configuration inside
$x+Q$ and hence we may write for any configuration $\xi$ on the
outside of $x+Q$,
\[
\prob(Y_{s,r}>s^4\log^5 s\,|\,\xi)\le \prob((a))+\prob((b))\,.
\]
Now, $(a)$ is estimated directly by Lemma
\ref{volld} and we get
\[
\prob((a))\le s^{d-6} e^{-c\log^3 s}\leq 2e^{-c'\log^3 s}\,.
\]
For $(b)$ we use Barsky \& Aizenman (\ref{deltais2}) to get, for any
$y\in Q_{s}$ that
\[
\prob(y\lr \partial Q)
  \le \prob(|\C_y|>s^{2d+1}-s)
  \stackrel{(\ref{deltais2})}{\le}
    C\left(s^{2d+1}-s\right)^{-1/2}
\]
and summing over all $y$ gives
\[
\prob(Q_s\lr\partial Q)\le Cs^{-1/2}\,.
\]
Since the event $(b)$ is simply that there exist $\log^2 s$ such
connections and they are all disjoint, the BK inequality gives
\[
\prob((b))\le \left(Cs^{-1/2}\right)^{\log^2 s}\le 2e^{-c\log^3 s}\,.
\]
Summing $(a)$ and $(b)$ gives
\[
\prob(Y_{s,r}>s^4\log^5 s\,|\,\xi)\le 2e^{-c\log^3 s}\,,
\]
our first milestone.

Denote $B=(x+Q)\cap Q_r$. We now apply Corollary
\ref{twosquares}. We recall that it claims that for any $y\in\partial B$,
\[
\prob(y\stackrel{B}{\lrlong}x)\ge e^{-c\log^2 (s^{2d+1})}=e^{-c'\log^2 s}\,.
\]
This might not be large, but it dwarfs $2e^{-c\log^3 s}$ by orders of
magnitude, so we can write
\begin{equation}\label{eq:Ysrxy}
\prob(Y_{s,r}>s^4\log^5 s\,|\,\xi)\le
  2e^{-c\log^3 s}\prob(y\stackrel{B}{\lrlong}x)
\end{equation}
where the $e^{-c\log^2 s}$ was simply swallowed into the constant.

With this established we can finish this case. Call a
configuration $\xi$ admissible if
$w\stackrel{Q_r}{\lrlong}\partial Q$. Clearly if
$w\stackrel{Q_r}{\lrlong}x$ then the configuration outside $x+Q$
must be admissible. Further, let $y(\xi)$ be some
point on $\partial Q$ connected to $0$ in $Q_r$ --- if more than one
exists, choose one arbitrarily. With these notations we can write
\begin{align*}
&&\lefteqn{\prob(w\stackrel{Q_r}{\lrlong}x
  \mbox{ and }Y_{s,r}>s^4\log^5 s) = }\qquad&\\
&&&= \sum_{\xi\;\textrm{admissible}}\prob(\xi)\prob(Y_{s,r}>s^4\log^5
s\mbox{ and }0\stackrel{Q_r}{\lrlong}x \,|\,\xi)\\
&&&\le \sum_{\xi\;\textrm{admissible}}\prob(\xi)\prob(Y_{s,r}>s^4\log^5
s \,|\,\xi)\\
&\mbox{by (\ref{eq:Ysrxy})}&&\le
  \sum_{\xi\;\textrm{admissible}}\prob(\xi)\cdot
  2e^{-c\log^3 s}\prob(y(\xi)\stackrel{B}{\lrlong}x)\\
&&&\le 2e^{-c\log^3 s}\prob(w\stackrel{Q_r}{\lrlong}x)
\end{align*}
and we are done with the case that $w\notin x+Q$ and $s^{2d+1}<r$.

We now assume that $w\in x+Q_{2s^{2d+1}}$ which covers both $w\in x+Q$
and $s^{2d+1}\ge r$. We use Lemma \ref{volld} to bound
\[
\prob \Big ( w \stackrel{Q_r}{\lrlong} x \and
             Y_s(x) \geq s^4\log^{5}s  \Big )
\leq \prob \Big ( Y_s(x) \geq s^4\log^{5}s  \Big )
\leq e^{-c \log^5 s} \, .
\]
On the other hand, define $B=(x+Q_{2s^{2d+1}})\cap Q_r$ and use
Corollary \ref{twosquares} to get
\[
\prob \Big ( w \stackrel{Q_r}{\lrlong} x \Big ) \geq
\prob\Big ( w \stackrel{B}{\lrlong} x\Big)
  \ge e^{-c\log^2 (2s^{2d+1})}
  \ge e^{-c'\log^2s} \, .
\]
This concludes the proof of part (1) of the theorem.  \qed
}


\subsection{{Exploring the cluster of the origin}} \label{sec:exploreshafan}

%
In this subsection we prove Theorem \ref{localexploreshafan}. To that aim, we ``explore'' the cluster of the origin in $Q_j$ in boxes of
size $s^{4d^2}$ --- in fact we are only interested in the boundary
$\partial Q_j$. This is quite a standard procedure, but let us
describe it in detail. Let $w\in\Z^d$ be some shift, and let $G=G(w)$
be the collection of all cubes of size $2s^{4d^2}$, aligned to the
shifted grid $w+\Z^d$ and intersecting $Q_j$ i.e.\[
G:=\{(Q_{2s^{4d^2}}+v)\cap
Q_j:v\in(4s^{4d^2}+1)\mathbb{Z}^{d}+w\}\setminus\{\emptyset\} \, ,
\] and choose an arbitrary ordering of $G$. The role of the shift $w$ is rather technical and will become evident later. The exploration
process is a sequence of two subsets of $G$, $E_{i}$ (the {\em
explored} boxes) and $A_{i}$ (the {\em active} boxes). We start
with \begin{align*}
E_{1} & =\{q\in G:q\cap\partial Q_j=\emptyset\}\\
A_{1} & =\{q\in G:\exists x\in\partial q,0\stackrel{\cup
E_{1}}{\longleftrightarrow}x\}\setminus E_{1} \, ,\end{align*}
where we use the notation $\cup E_i = \cup _{q \in E_i} q$.

At step $i$ we choose from $A_{i-1}$ a box according to our ordering of
$G$. Denote it by $q_{i}$. We add $q_i$ to the set of explored
boxes $E_{i-1}$, and then add to $A_i$ all boxes not yet explored
(that is, boxes not belonging to $E_{i-1}$) which can be reached
from $0$ by paths going only through the explored boxes $E_{i-1}\cup q_i$.
Namely, \begin{align*}
E_{i} & =E_{i-1}\cup\{q_{i}\}\\
A_{i} & =\left(A_{i-1}\cup\{q \in G:\exists x\in\partial
q,0\stackrel{\cup E_{i}}{\longleftrightarrow}x\}\right)\setminus
E_{i}.\end{align*} Since the set of boxes $G$ is finite and $E_i$
increases at each step, at some
time we must have $A_{i}=\emptyset$ at which time we cannot choose
$q_{i+1}$. We say that the exploration finished, and denote this
stopping time by $\tau$.

The exploration process is used in order to define two martingales.
One to control the number bad boxes ($\beta_{i}$) and one to control the
boundary vertices ($\gamma_{i}$). We say that a box $q\in G$ is
$s$-bad if there exists some $x\in\partial Q_j$ which is $s$-locally-bad and such that $(x+Q_{s^{4d^2}})\cap Q_j\subset q$. Both martingales are adaptable
to the exploration filtration $\{\F_i\}$, namely to the configuration restricted
to $\cup_{j\leq i} E_{j}$. Their definition is as follows. We start
with $\beta_{1}=\gamma_{1}=0$. At each step we define
\begin{align*}
\beta_{i} & =\beta_{i-1}+\boldsymbol{1}\{q_{i}\mbox{ is $s$-bad}\}-\prob(q_{i}\mbox{ is $s$-bad} \, \mid \, \F_{i-1})\nonumber \, ,\\
 \gamma_{i} & =\gamma_{i-1}+\boldsymbol{1}\{\exists x\in
q_{i}\cap\partial Q_j:0\stackrel{\cup
E_{i}}{\longleftrightarrow}x\}\;-\\
&\qquad\qquad\qquad-\prob(\exists x\in
q_{i}\cap\partial Q_j:0\stackrel{\cup
E_{i}}{\longleftrightarrow}x\, \mid \, \F_{i-1}) \, ,
\label{eq:defgam}\end{align*}
We
extend $\beta_i$ and $\gamma_i$ for all $i$ by making
$\beta_{i}=\beta_{i-1}$ and $\gamma_{i}=\gamma_{i-1}$ when
$A_{i-1}=\emptyset$. Clearly, they are indeed martingales. For our next lemmas, recall the definition of $X_j$ at (\ref{eq:defXj}).

\begin{lemma} \label{boundaryverts} There exists constants $C_1>0$ and
  $c_1>0$ such that for any $j,s$ and $M$ we have
$$ \prob \big ( c_{1}e^{-C_{1}\log^{2}s}\tau \ge X_{j} \ge M\big )
  \leq Ce^{-cM+C\log^2s} \, .$$
where $\tau$ is the stopping time for the exploration defined above.
\end{lemma}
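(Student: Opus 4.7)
The plan is to combine two observations: (a) the per-step conditional probability that the exploration uncovers a new point of $\partial Q_j$ connected to $0$ is at least $e^{-C\log^2 s}$, and (b) the number $S_\tau := \sum_{i=1}^{\tau} Y_i$ of successful steps (where $Y_i$ is the indicator appearing in the definition of $\gamma_i$) is trivially bounded above by $X_j$. An elementary Chernoff-type argument, tailored for the stopping time $\tau$, then yields the estimate.

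\textbf{Per-step lower bound.} By construction, every explored box $q_i$ with $i \ge 1$ meets $\partial Q_j$ (boxes disjoint from $\partial Q_j$ are placed in $E_1$ at the outset), and $\F_{i-1}$ reveals some vertex $y_i \in \partial q_i \cap \C(0;\cup E_{i-1})$. Since the boxes of $G$ are pairwise disjoint, the edges inside $q_i$ are independent of $\F_{i-1}$, so Corollary~\ref{twosquares} applied to $B := q_i$ (whose side length is $\le 2s^{4d^2}$) and $y := y_i$ gives
\[
p_i := \prob\!\big(Y_i = 1 \,\big|\, \F_{i-1}\big) \;\ge\; \prob\!\big(y_i \stackrel{q_i}{\lrlong} \partial Q_j\big) \;\ge\; c\,e^{-C\log^2 s}
\qquad \text{on } \{i \le \tau\}.
\]
Pick $c_1$ small and $C_1$ large enough that $p := c_1 e^{-C_1\log^2 s}$ satisfies $p_i \ge 2p$ throughout.

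\textbf{Chernoff step.} I claim that for every integer $k$,
\[
\prob\!\big(S_k \le pk,\; \tau \ge k\big) \;\le\; e^{-c_2\, pk}
\]
for an absolute $c_2 > 0$. Since $\{i \le \tau\} \in \F_{i-1}$, the elementary estimate $\E[e^{-\lambda Y_i}\,\mathbf{1}\{i \le \tau\}\mid \F_{i-1}] \le \mathbf{1}\{i \le \tau\}\, e^{-2p(1-e^{-\lambda})}$ iterates to $\E[e^{-\lambda S_k}\,\mathbf{1}\{\tau \ge k\}] \le e^{-2pk(1-e^{-\lambda})}$. A Markov bound at $\lambda = \log 2$ then produces the claim with $c_2 = 1-\log 2$.

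\textbf{Summation.} Every successful step contributes at least one vertex to $X_j$, and these vertices are distinct because the boxes of $G$ are pairwise disjoint, so $S_\tau \le X_j$. Hence on the event $\{p\tau \ge X_j \ge M\}$ we have $\tau \ge M/p$, and on $\{\tau = k\}$ moreover $S_k = S_\tau \le X_j \le pk$. Summing the Chernoff bound,
\[
\prob\!\big(p\tau \ge X_j \ge M\big)
\;\le\; \sum_{k \ge M/p} \prob(\tau \ge k,\, S_k \le pk)
\;\le\; \sum_{k \ge M/p} e^{-c_2 pk}
\;\le\; \frac{e^{-c_2 M}}{1-e^{-c_2 p}},
\]
which is at most $C\, e^{-c_2 M}/p = C\, e^{-c_2 M + C_1 \log^2 s}$, as required.

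The main obstacle is the first step: one must locate an $\F_{i-1}$-measurable gateway vertex $y_i$ on $\partial q_i$ connected to $0$ in $\cup E_{i-1}$, and verify that the edges strictly inside $q_i$ (which are all that is needed to reach $\partial Q_j$ from $y_i$ within $q_i$) are genuinely independent of $\F_{i-1}$, so that Corollary~\ref{twosquares} can be invoked without conditioning. Once this is set up, the remainder is a routine exponential-moment computation for a stopped process.
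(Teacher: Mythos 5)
Your proof is correct and follows essentially the same route as the paper: the same exploration filtration, the same observation that each explored box meets $\partial Q_j$ and contains an $\F_{i-1}$-measurable gateway vertex, the same application of Corollary \ref{twosquares} via independence of the unexplored box, and the same deterministic bound $S_\tau\le X_j$ before summing over the value of $\tau$. The only difference is cosmetic: you run a Chernoff/exponential-moment bound on the stopped sum of conditionally Bernoulli indicators, whereas the paper compensates to form the martingale $\gamma_i$ and applies Azuma--Hoeffding; both give the same $e^{-c\mu k}$ per-step decay and hence the same final estimate.
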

When we apply the lemma $s\ll M$ so you may think about the right-hand
side as $Ce^{-cM}$.
\begin{proof}
For every $i$,\begin{align*}
X_{j} & \ge \big |\{k\leq i:\exists x\in q_{k}\cap\partial Q_j:0\stackrel{\cup E_{k}}{\longleftrightarrow}x\} \big |=\\
 & =\gamma_{i}+\sum_{k=1}^{i}\prob(\exists x\in q_{k}\cap\partial Q_j:0\stackrel{\cup E_{k}}{\longleftrightarrow}x \, \mid \, \F_{k-1}).\end{align*}
To bound from below the sum on the right-hand, note the fact that we explored
$q_{k}$ implies that there exists some $z\in\partial q_k$
which is connected to $0$ in $\cup E_{k-1}$. This means that given
$\F_{k-1}$, the probability that there exists $x\in
q_{k}\cap\partial Q_j$ such that $0\stackrel{\cup
E_{k}}{\longleftrightarrow}x$ is at least the probability that
$z\stackrel{q_{k}}{\leftrightarrow}\partial Q_j$ which by Corollary
\ref{twosquares} is at least
$ce^{-C\log^{2}(2s^{4d^2})}\ge c_2e^{-C_2\log^2 s}$ for some $C_2>0$
and $c_2>0$. Assume also that $c_2<\frac{1}{2}$. This holds whenever the
exploration at time $k$ is still alive. We will prove the assertion of the lemma with constants $c_1=c_2^2$, and $C_1=2C_2$. For brevity write $\mu=c_1\exp(-C_1\log^2 s)$ and so we get that for any
$i>0$
\[
X_{j}\ge\gamma_{i}+\mu^{1/2}\min\{i,\tau\}\,.
\]
We rearrange to get that
\begin{eqnarray*}
\lefteqn{\prob(\mu\tau \ge X_{j}\ge M)\le} & \lefteqn{}\\
 & \qquad & \le\sum_{i=M/\mu}^{\infty}
  \prob(\tau=i\mbox{ and }X_{j}\le \mu i)\le\\
 &  &
\le\sum_{i= M/\mu}^{\infty}
  \prob(\gamma_{i}\le -(\mu^{1/2}-\mu) i)
\end{eqnarray*}
We assumed $c_2<\frac{1}{2}$ which gives $\mu<\frac{1}{4}$ and hence $\mu^{1/2}-\mu \geq {\mu^{1/2} / 2}$. We now use the Azuma-Hoeffding inequality, which asserts that for
any $a$, $\prob(\gamma_{i}\le -a)\leq e^{-a^{2}/2i}$. We get that
\begin{equation*} \prob\big (\mu\tau \geq
X_{j}\geq M\big )
\le\sum_{i=M/\mu}^{\infty}\exp(-\mu i/4)\le
\frac{C}{\mu}e^{-cM} \, ,\label{eq:endsmall}\end{equation*} concluding the
proof of the lemma.
\end{proof}

The counterweight to Lemma \ref{boundaryverts} is the following lemma
which estimates the bad vertices. Recall that $G$ and hence $\beta_i$,
$\gamma_i$ and $\tau$ all depended on a parameter $w$, the
shift. Denote now
$$ X_j^{s\textrm{-bad}} = \left|\left\{x\in\partial Q_j:
  x\mbox{ is } s\mbox{-locally-bad}\right\}\right|\, ,$$
  and
\[
X_j^\mathrm{bad}(w)= \left|\left\{x\in\partial Q_j:
  x\mbox{ is } s\mbox{-locally-bad, and }
  \exists q\in G(w)\mbox{ s.t.\ }x\in q
\right\}\right|\,.
\]
Of course, to conclude the proof of Theorem \ref{localexploreshafan} we
will need to use the fact that $X_j^{s\textrm{-bad}}$ is bounded by a sum of $X_j^{\mathrm{bad}}(w)$ for $2^d$ different $w$-s. But for now let us examine one $w$ only.
\begin{lemma} \label{badverts} There exist constants $C_3>0$ and
  $c_3>0$ such that for any $j$, $s$, $w$ and $M$;
and any real number $\mu \geq C_{3}e^{-c_{3}\log^{4}s}$ we have
$$ \prob \big (\mu^{-1} X_j^{\textrm{bad}}(w) \ge \tau \ge M \big )
\leq \frac{C}{s^{2-2d}\mu^2}\exp(-cs^{2-2d}\mu^{2} M) \, .$$
\end{lemma}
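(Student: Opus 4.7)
The proof proposal below uses the martingale $\beta_i$ already introduced above, combined with Azuma--Hoeffding and a geometric comparison between the bad box count and the bad vertex count $X_j^{\mathrm{bad}}(w)$. I would proceed in three stages.

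First I would verify that the conditional probability $\prob(q_i\text{ is }s\text{-bad}\mid\F_{i-1})$ is very small. Because the boxes of $G(w)$ are pairwise disjoint with at least a single vertex of separation between neighbouring boxes, the edges inside $q_i$ are independent of $\F_{i-1}$ (which only reveals edges in previously explored boxes). The event $\{q_i\text{ is }s\text{-bad}\}$ depends only on edges in $q_i$, so its conditional probability equals its unconditional probability. Applying Lemma~\ref{easyshafan} together with a union bound over the at most $C(s^{4d^2})^{d-1}$ candidate vertices $x\in q_i\cap \partial Q_j$ satisfying the geometric condition $(x+Q_{s^{4d^2}})\cap Q_j\subset q_i$ yields $\prob(q_i\text{ is }s\text{-bad})\le Ce^{-c\log^{4}s}$. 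For $\mu\ge C_3 e^{-c_3\log^{4}s}$ with $C_3,c_3$ chosen appropriately (in terms of the constant $C_4$ introduced below), this gives $p_i:=\prob(q_i\text{ is }s\text{-bad}\mid\F_{i-1})\le \mu/(4C_4 s^{d-1})$.

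The second stage is the geometric heart of the argument: I would show that each $s$-bad box accounts for at most $C_4 s^{d-1}$ vertices of $X_j^{\mathrm{bad}}(w)$, i.e.
\[
X_j^{\mathrm{bad}}(w)\le C_4 s^{d-1}\cdot N_{\mathrm{bad}},
\qquad N_{\mathrm{bad}}=\sum_{i\le\tau}\mathbf 1\{q_i\text{ is }s\text{-bad}\}.
\]
The trivial bound would allow as many as $(s^{4d^2})^{d-1}$ locally-bad vertices inside a single box, so the improvement to $s^{d-1}$ must exploit the spanning-cluster characterisation of $s$-local badness in Definition~\ref{def:localbad}: the cluster structure forces the locally-bad vertices of $\partial Q_j$ inside a bad box to be concentrated on a region of $(d-1)$-surface area of order $s^{d-1}$, the natural scale at which the badness is defined.

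Combining the two stages, on the event $\{\mu^{-1}X_j^{\mathrm{bad}}(w)\ge\tau\ge M\}$ we have $N_{\mathrm{bad}}\ge \mu\tau/(C_4 s^{d-1})$, and since $\sum_{i\le\tau}p_i\le \mu\tau/(4C_4 s^{d-1})$, we conclude $\beta_\tau\ge \mu\tau/(2C_4 s^{d-1})$. Now I would apply Azuma--Hoeffding to $\beta_n$ (whose increments are bounded by $1$) at each fixed $n\ge M$ and sum the resulting geometric series:
\[
\sum_{n\ge M}\prob\!\Bigl(\beta_n\ge \tfrac{\mu n}{2C_4 s^{d-1}}\Bigr)
\le \sum_{n\ge M}\exp\!\Bigl(-\tfrac{\mu^2 n}{8C_4^{2}s^{2d-2}}\Bigr)
\le \frac{Cs^{2d-2}}{\mu^{2}}\exp\!\Bigl(-\tfrac{c\mu^{2}M}{s^{2d-2}}\Bigr),
\]
which is precisely the stated bound $\tfrac{C}{s^{2-2d}\mu^{2}}\exp(-cs^{2-2d}\mu^{2}M)$. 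The main obstacle is clearly the box-to-vertex comparison in the second stage: one needs a structural argument that upgrades the naive $(s^{4d^{2}})^{d-1}$ bound per box down to $O(s^{d-1})$, using that $s$-local badness is controlled by spanning clusters whose relevant scale is $s$ rather than the exploration scale $s^{4d^{2}}$.
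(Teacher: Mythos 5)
Your overall architecture is exactly the paper's: bound $\prob(q_i\mbox{ is bad}\mid\F_{i-1})$ by locality plus Lemma \ref{easyshafan} and a union bound over the vertices of the box, convert the bad-box count into a bound on $X_j^{\textrm{bad}}(w)$ via a per-box vertex count, and finish by applying Azuma--Hoeffding to $\beta_n$ for each fixed $n\ge M$ and summing the geometric series. Your first and third stages are correct and coincide with the paper's proof.

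The step you flag as the ``main obstacle'' is, however, a false obstacle, and the repair you propose would not work. There is no structural reason why the $s$-locally-bad vertices of $\partial Q_j$ inside one bad box should be confined to a set of size $O(s^{d-1})$: local badness of $x$ is a property of the spanning clusters of $(x+Q_{s^{4d^2}})\cap Q_j$, and nothing in Definition \ref{def:localbad} prevents every $x$ in the inner part of $q\cap\partial Q_j$ --- of which there are of order $(s^{4d^2})^{d-1}$ --- from being locally bad simultaneously (a single large cluster meeting each $x+Q_s$ in more than $s^4\log^4 s$ vertices already does it). The paper simply asserts the deterministic inequality $X_j^{\textrm{bad}}(w)\le s^{d-1}|\{k\le\tau:q_k\mbox{ is bad}\}|$ without justification; the honest per-box count is $C(s^{4d^2})^{d-1}$, and the $s^{d-1}$ is best read as shorthand for ``a fixed polynomial in $s$''. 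The correct move is therefore to use the trivial counting bound. This replaces $s^{2-2d}$ by $s^{-8d^2(d-1)}$ (and adjusts the prefactor accordingly), which costs nothing: in the only application, in the proof of Theorem \ref{localexploreshafan}, the lemma is invoked with $\mu$ of order $e^{-C\log^2 s}$, so $\mu^2$ already swallows any fixed power of $s$, and the hypothesis $\mu\ge C_3e^{-c_3\log^4 s}$ is likewise unaffected. With your second stage replaced by this one-line count (and the polynomial in the conclusion adjusted, or noted to be immaterial), your argument is complete; do not look for a spanning-cluster concentration argument, as none exists and none is needed.
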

Again, the way we apply this lemma most of the factors on the
right-hand side are negligible, and one can consider it as
$\exp(-\mu^2 M)$.
\begin{proof}
We have \[ \beta_{i}=|\{k\leq i:q_{k}\mbox{ is
bad}\}|-\sum_{k=1}^{i}\prob(q_{k}\mbox{ is bad} \, \mid \,
\F_{k-1}).\] By Lemma \ref{easyshafan} we have that
\begin{eqnarray*}
\lefteqn{\prob(q_{k}\mbox{ is bad} \, \mid \, \F_{k-1})\leq}&&\\
&\qquad&\le\sum_{x:x+Q_{s^{4d^2}}\subset q_k}
  \prob(x\mbox{ is } s\mbox{-locally-bad}\,\mid\,\F_{k-1})=\\
\mbox{by locality}&&=\sum_{x:x+Q_{s^{4d^2}}\subset q_k}
  \prob(x\mbox{ is } s\mbox{-locally-bad})\le\\
\mbox{by Lemma \ref{easyshafan}} && \le \sum_{x:x+Q_{s^{4d^2}}\subset q_k}
  Ce^{-c\log^4 s}\le \\
&&\le Cs^{4d^3}\cdot Ce^{-c\log^4 s} \le
Ce^{-c\log^{4}s}
\end{eqnarray*} and so
\[
\beta_{i}\ge|\{k\leq i:q_{k}\mbox{ is bad}\}|-Ce^{-c\log^{4}s}i\,.
\] Thus, it holds deterministically that
\[ X_j^{\textrm{bad}}(w) \le s^{d-1}|\{k\le\tau:q_{k}\mbox{ is bad}\}|\le
s^{d-1}\beta_{\tau}+C_{4} e^{-c_{4}\log^{4}s}\tau\,.
\] Define $c_3:=c_4$ and $C_3:=2C_4$. We get that $
X_j^{\textrm{bad}}(w) \ge \mu \tau$ implies that
\[
\beta_{\tau}\ge s^{1-d}\tau(\mu-\tfrac{1}{2}C_{3} e^{-c_{3}\log^{4}s})\ge
\tfrac{1}{2}s^{1-d}\mu \tau \, ,
\]
by our assumption on $\mu$. This gives
\begin{align*}
\prob(\mu^{-1} X_j^{\textrm{bad}}(w) & \ge \tau \ge M)\le
  \prob\left(\frac{2s^{d-1}}{\mu}\beta_{\tau} \ge \tau \ge M\right)= \\
& =\sum_{i=M}^{\infty}\prob(\tau=i\mbox{ and }
    \beta_{i} \ge\tfrac{1}{2}s^{1-d}\mu i)\le\\
&\le\sum_{i=M}^{\infty}\prob(\beta_{i} \ge \tfrac{1}{2}s^{1-d}\mu i)\le\\
& \le \sum_{i=M}^{\infty}\exp\left(-cs^{2-2d}\mu^{2}i\right)\le
    \frac{C}{s^{2-2d}\mu^2}\exp(-c\mu^{2}Ms^{2-2d})
\end{align*}
where for the penultimate inequality we again used Azuma-Hoeffding.
\end{proof}

\noindent {\bf Proof of Theorem \ref{localexploreshafan}.}
By definition
\[
X_j^{K\textrm{-irr}}=\bigcup_{s\geq K}^\infty
X_j^{s\textrm{-bad}}
\]
so the theorem will be proved once we get a good estimate of
\[
\prob(X_j\ge M\mbox{ and }X_j^{s\textrm{-bad}} \geq X_j/s^2)
\]
by taking a union bound over all $s$. Next we relate
$X_j^{s\textrm{-bad}}$ to $X_j^{\textrm{bad}}(w)$. Indeed, let
$W=\{w:w_i\in\{0,2s^{4d^2}\}\,\,\,\forall i=1,\dotsc,d\}$ so $W$ is a set of
$2^d$ shifts. It is easy to convince oneself that for any $x\in\Z^d$
there exists some $w\in W$ such that $x+Q_{s^{4d^2}}\subset q$ where $q$
is a cube of form $v+Q_{2s^{4d^2}}$, $v\in(4s^{4d^2}+1)\Z^d+w$. Thus
\[
X_j^{s\textrm{-bad}}=\bigcup_{w\in W}X_j^{\textrm{bad}}(w)
\]
so it is enough to estimate
$\prob(X_j\ge M\mbox{ and }X_j^\textrm{bad}(w)>X_j/2^ds^2)$. For this we write
\begin{multline*}
\prob(X_{j} \ge M\mbox{ and } X_j^{\textrm{bad}}(w) \ge X_{j}/2^ds^2)\le\prob\Big (c_{1}e^{-C_{1}\log^{2}s}\tau \ge X_{j} \ge M \Big )+\\
\prob\left(2^ds^2 X_j^\textrm{bad}(w) \ge X_{j} \ge
  \max\{M,c_{1}e^{-C_{1}\log^{2}s}\tau\}\right)
\end{multline*}
where $c_1$ and $C_1$ are from Lemma \ref{boundaryverts}.
The first term on the right is estimated by Lemma
\ref{boundaryverts} to be at most $C\exp(-cM+C\log^2s)$. The event of the
second term implies that $$ X_j^\textrm{bad} \ge 2^{-d}s^{-2}
c_{1}e^{-C_{1}\log^{2}s}\tau \, ,$$ and that $\tau \ge Ms^{1-d}$, since  $\tau \ge s^{1-d}X_{j}$. We wish to use Lemma \ref{badverts} with
$\mu= 2^{-d}s^{-2}c_{1}e^{-C_{1}\log^{2}s}$ and $M_\textrm{lemma
  \ref{badverts}}=Ms^{1-d}$. The only condition of Lemma
\ref{badverts} is that $\mu\ge C_{3}e^{-c_{3}\log^{4}s}$, which will
hold as long as $s$ is sufficiently large. We fix $K$ sufficiently large
so that any $s\geq K$ will satisfy the requirement on $\mu$ and get by Lemma \ref{badverts} that
\begin{align*}
\prob\left(2^ds^2X_j^\textrm{bad} \ge X_{j} \ge
    \max\{M,c_{1}e^{-2C_{1}\log^{2}s}\tau\}\right)
&\le \frac{C}{\mu^2s^{2-2d}}\exp\left(-c\mu^{2}Ms^{3-3d}\right) \\
&\le  C\exp\left(-ce^{-C\log^{2}s}M+C\log^2 s\right)\,.
\end{align*}
This is the larger term, so we get
\[
\prob(X_j\ge M\mbox{ and }X_j^\textrm{bad}(w)>X_j/2^ds^2)
\le   C\exp\left(-ce^{-C\log^{2}s}M+C\log^2 s\right)\,.
\]
We are nearly done. Let $s_0$ be the maximal $s$ such
that $ce^{-C\log^2 s}\ge M^{-1/2}$, so $\log s_0\approx \log^{1/2} M$. We
have
\begin{multline}\label{eq:s<s0}
\sum_{s=K}^{s_0} \prob(X_j\ge M\mbox{ and
}X_j^\textrm{bad}(w)>X_j/2^ds^2)
\le\sum_{s=K}^{s_0} C\exp\left(-ce^{-C\log^2 s}M+C\log^2s\right)\le\\\le
C\exp(-c\sqrt{M}+C\log M)\le Ce^{-c\sqrt{M}}.
\end{multline}
Summing over all $w\in W$ we get our estimate for small $s$,
\[
\sum_{s=K}^{s_0} \prob(X_j\ge M\mbox{ and
}X_j^{s\textrm{-bad}}>X_j/s^2)\le Ce^{-c\sqrt{M}}.
\]
For $s>s_0$ we use a much simpler estimate directly using Lemma
\ref{easyshafan} with no need to go through the ``martingale lemmas''
\ref{boundaryverts} and \ref{badverts} and no need for $w$. We write
\begin{align*}
\prob(X_j^{s\textrm{-bad}}>M/s^2)&\le \prob(X_j^{s\textrm{-bad}}\ge
1)\le\\
&\le\sum_{x\in Q_j}\prob(x\mbox{ is } s\mbox{-loc-bad})\le
Cj^d\exp(-c\log^4s).
\end{align*}
so
\begin{multline}\label{eq:s>s0}
\sum_{s= s_0}^\infty \prob(X_j\ge M\mbox{ and
}X_j^{s\textrm{-bad}}>X_j/s^2)\le\\
\le\sum_{s= s_0}^\infty Cj^d\exp\left(-c\log^4 s\right)\le
  Cj^d\exp\left(-c\log^{2}M\right)\,.
\end{multline}
Clearly $\mbox{(\ref{eq:s>s0})}$ is asymptotically larger than $\mbox{(\ref{eq:s<s0})}$ so
all-in-all we get
\[
\sum_{s=K}^\infty \prob(X_j\ge M\mbox{ and
}X_j^{s\textrm{-bad}}>X_j/s^2)\le Cj^d\exp(-c\log^{2}M) \, ,\]
which concludes the proof.\qed

\section{{\bf Proof of Theorem \ref{lowmass}}}\label{sec:lowmass}

\begin{wrapfigure}[5]{o}{0.33\columnwidth}%
\begin{centering}
\input{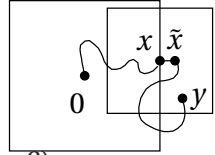}
\par
\end{centering}
\setlength{\captionindent}{0pt}
\caption{\label{cap:admis} An admissible pair $(x,y)$.}
\end{wrapfigure}
Let $j$ and $L$ be as in Theorem \ref{lowmass} and let $K$ be some
parameter sufficiently large --- we will need it to be sufficiently
large to allow to apply Theorem \ref{globalexploreshafan}, but this is
not the only restriction. We say a pair of vertices $(x,y)$ are
$(j,L,K)$-admissible if the following conditions hold
(see figure
\ref{cap:admis})

\begin{itemize}
\item $x\in \partial Q_j$ and $y\in x+Q_L$;

\item $0 \stackrel{Q_j}{\lrlong} x$ and $x \lr y$;

\item $x$ is $K$-regular; and

\item The edge $(x,\tilde{x})$ is pivotal for the event $0 \lr y$
  where $\tilde{x}$ is the neighbor of $x$ not in $Q_j$ (if more than
  one exists, choose the first in lexicographical order).
\end{itemize}
Define the random variable
$$ Y(j, K, L) = \left | \big \{ (x,y) : (x,y) \textrm{ are } (j,L,K)\textrm{-admissible} \big \} \right | \, .$$
We write $X_j^{K{\textrm{-reg}}}$ for the random variable counting the
number of $K$-regular vertices, that is, $X_j^{K{\textrm{-reg}}} = X_j
-X_j^{K{\textrm{-irr}}}$ (see the definition of
$X_j^{K{\textrm{-irr}}}$ before the statement of Theorem
\ref{globalexploreshafan}). Throughout this section, $j$, $L$ and $K$
will be fixed, and we will usually omit them from the notation, namely
we will write $Y=Y(j,K,L)$,
$X^{\textrm{reg}}=X_j^{K{\textrm{-reg}}}$ etc. The following lemmas
are the key steps in proving Theorem \ref{lowmass}.

\begin{lemma}\label{firstmoment} Let $K$ be sufficiently large, and
let $j,M$ and $L$ be integers such that $M \geq L^2/2$. Then there exists a constant $c=c(K)>0$ such that
$$ \E Y(j, K, L) {\bf 1}_{\{ X_j^{K\textrm{-reg}} = M \}} \geq cML^2
  \prob(X_j^{K\textrm{-reg}} = M) \, .$$
\end{lemma}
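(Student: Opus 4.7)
Condition on $\mathcal C:=\mathcal C(0;Q_j)$. Since $X_j^{K\text{-reg}}$ is measurable with respect to $\mathcal C$, we write
\[
\E\bigl[Y\,\mathbf 1_{\{X_j^{K\text{-reg}}=M\}}\bigr]
=\E\bigl[\mathbf 1_{\{X_j^{K\text{-reg}}(\mathcal C)=M\}}\,\E[Y\mid\mathcal C]\bigr],
\]
and it suffices to prove $\E[Y\mid \mathcal C=A]\ge cML^2$ for every configuration $A$ with $X_j^{K\text{-reg}}(A)=M$. For a $K$-regular $x\in A\cap\partial Q_j$ and $y\in x+Q_L$ the admissibility event, given $\mathcal C=A$, reduces to $\{(x,\tilde x)\text{ open}\}\cap\{\tilde x\lr y\text{ off }A\}$; both sub-events depend only on edges with no endpoint in $A$, which are independent of the conditioning. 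Hence
\[
\E[Y\mid\mathcal C=A]=p_c\sum_{\substack{x\in A\cap\partial Q_j\\K\text{-regular}}}\sum_{y\in x+Q_L}\prob\bigl(\tilde x\lr y\text{ off }A\bigr).
\]
The whole proof reduces to establishing the per-vertex lower bound
\begin{equation*}
\sum_{y\in x+Q_L}\prob\bigl(\tilde x\lr y\text{ off }A\bigr)\ \ge\ cL^2\qquad(\ast)
\end{equation*}
for every $K$-regular $x\in A\cap\partial Q_j$; summing $(\ast)$ over the $M$ regular $x$'s and multiplying by $p_c$ gives the required estimate.

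To prove $(\ast)$ I would first restrict the sum to $y\in(x+Q_L)\setminus Q_j$. Since $x\in\partial Q_j$, the outward portion of $x+Q_L$ has cardinality at least $cL^d$. For such $y$ one has $y\notin A$ (because $A\subseteq Q_j$), and any open path from $\tilde x$ to $y$ that remains in $\mathbb Z^d\setminus Q_j$ automatically avoids $A$; consequently
\[
\prob(\tilde x\lr y\text{ off }A)\ \ge\ \prob(\tilde x\lr y\text{ in }\mathbb Z^d\setminus Q_j),
\]
a quantity independent of $A$. A shell decomposition then reduces $(\ast)$ to a half-space-type lower bound $\prob(\tilde x\lr y\text{ in }\mathbb Z^d\setminus Q_j)\ge c|y-\tilde x|^{2-d}$ for $y$ in the outward cone at $x$; summed against the shells around $\tilde x$ this gives $\sum_y\ge cL^2$.

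\textbf{Main obstacle.} The only serious difficulty lies in the half-space-type two-point lower bound. The naive attempt
\[
\prob(\tilde x\lr y)=\prob(\tilde x\lr y\text{ off }Q_j)+\prob(\tilde x\lr y\text{ uses }Q_j),
\]
with BK and the two-point function bounding the second summand by $\sum_{z\in Q_j}|\tilde x-z|^{2-d}|z-y|^{2-d}\le C|\tilde x-y|^{4-d}$ (valid for $d>4$ by Riesz convolution), is too crude at the relevant scale, since $|v|^{4-d}=|v|^2|v|^{2-d}$ dominates $|v|^{2-d}$ for $|v|>1$. The workaround is to apply the BK bound only against $z\in A$ rather than against all of $Q_j$ and then use the $K$-regularity of $x$, which gives $|A\cap(x+Q_s)|<s^4\log^7s$ for all $s\ge K$. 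Dyadic summation with this bound combined with the two-point estimate $\prob(\tilde x\lr z)\le C|x-z|^{2-d}$ produces, for $d>6$, a contribution from scales $s\ge K$ of order $CL^2K^{6-d}\log^7 K$, which is small once $K$ is taken large enough; the contribution from scales $s<K$ is controlled by absorbing it into the constant $c=c(K)$ permitted by the lemma. The remaining ``far'' contribution $s>L$ is negligible because $d>6$ forces $L^{8-d}\log^7 L=o(L^2)$. Thus $(\ast)$ holds with a constant $c(K)>0$, and the proof of the lemma is concluded by integrating over $A$.
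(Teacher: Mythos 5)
Your reduction of admissibility, given $\C(0;Q_j)=A$, to the event $\{(x,\tilde x)\text{ open}\}\cap\{\tilde x\lr y\hbox{ {\rm off} }A\}$ is not correct in the direction you need, and this hides the hardest part of the lemma. That event does \emph{not} imply that $(x,\tilde x)$ is pivotal for $0\lr y$: nothing prevents $y$ from being simultaneously connected to $0$ through a different boundary vertex $x_2\in A\cap\partial Q_j$ and its outward neighbour, in which case $(x,y)$ is not admissible. So your displayed identity for $\E[Y\mid\C=A]$ is in fact only an \emph{upper} bound on the admissibility probability, which is useless for a lower bound on $\E Y$. (The pivotality clause cannot simply be dropped either: it is exactly what makes $x$ unique for each $y$, i.e.\ what gives $A_j\ge Y$ in the proof of Theorem \ref{lowmass}.) Enforcing pivotality is the bulk of the paper's argument: one introduces an auxiliary vertex $x'\in(x+Q_K)\setminus Q_{j+K/2}$, adds the event $\EE_3=\{\C(x)\cap\C(x')=\emptyset\}$ (whose cost is controlled via the BK--Reimer inequality in Lemma \ref{sume1e2e3}), and then performs a local surgery on the $O(K^d)$ edges of $(x+Q_K)\setminus Q_{j+1}$ --- closing everything except the path $x'\lr y$ and opening a path $\tilde x\to x'$ --- which forces $(x,\tilde x)$ to be pivotal at multiplicative cost $c(K)$. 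None of this appears in your proposal.

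There is a second, quantitative gap in your estimate $(\ast)$. With the base point taken to be $\tilde x$, which is \emph{adjacent} to the cluster $A$, the subtraction argument
\[
\sum_{y}\prob(\tilde x\lr y\hbox{ {\rm off} }A)\ \ge\ \sum_y\prob(\tilde x\lr y)\ -\ CL^2\sum_{z\in A}|z-\tilde x|^{2-d}
\]
fails: already the single term $z=x$ contributes $CL^2$ to the subtracted sum with $C$ the (uncontrollable) constant from (\ref{tpt}), and the scales below $K$ contribute up to order $L^2K^2$ since $K$-regularity says nothing there. A deficit term cannot be ``absorbed into the constant $c(K)$'' of a lower bound. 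This is precisely why the paper places the base point $x'$ at distance at least $K/2$ from $Q_j$, so that the dyadic sum over $z\in A$ starts at scale $K/2$ and yields $CK^{6-d}\log^7K$, which is beaten by the main term once $K$ is large; the price of starting from $x'$ rather than $\tilde x$ is then paid by the local modification. Your alternative route via a half-space two-point lower bound $\prob(\tilde x\lr y\hbox{ {\rm in} }\Z^d\setminus Q_j)\ge c|y-\tilde x|^{2-d}$ is likewise not available from the inputs of the paper. The dyadic regularity computation in your last paragraph is essentially the paper's Lemma \ref{sume1e2}, but it only works after both of the above issues are addressed.
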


\begin{lemma}\label{secondmoment} Let $j$, $K$, $M$ and $L$ be integers. Then
$$ \E Y^2(j, K, L) {\bf 1}_{\{ X_j^{K\textrm{-reg}} = M \}} \leq CM^2 L^4 \prob(X_j^{K\textrm{-reg}} = M) \, .\\$$
\end{lemma}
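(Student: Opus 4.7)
My plan is to expand the second moment as a quadruple sum over $(x_1,y_1,x_2,y_2)$ and reduce to a four-point function estimate. Write $\mathcal{A}=\{X_j^{K\text{-reg}}=M\}$ and $\mathcal{R}=\{x\in\partial Q_j:0\stackrel{Q_j}{\lrlong}x$ and $x$ is $K$-regular$\}$. The pivotality of $(x,\tilde x)$ for $0\lr y$ together with the open pivotal edge forces $y$ into the cluster $\mathcal{C}(\tilde x)$, so each admissible $(x,y)$ contributes at most $\mathbf{1}_{\{x\in\mathcal{R}\}}\cdot\mathbf{1}_{\{y\in\mathcal{C}(\tilde x)\cap(x+Q_L)\}}$, and hence
\[
\E[Y^2\mathbf{1}_\mathcal{A}]\le \sum_{x_1,x_2\in\partial Q_j}\E\Big[\mathbf{1}_{\mathcal{A}\cap\{x_1,x_2\in\mathcal{R}\}}\cdot|\mathcal{C}(\tilde x_1)\cap(x_1+Q_L)|\cdot|\mathcal{C}(\tilde x_2)\cap(x_2+Q_L)|\Big].
\]

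For the cluster sizes I would apply the standard tree-graph (BK) bound
\[
\prob(\tilde x_1\lr y_1,\tilde x_2\lr y_2)\le \prob(\tilde x_1\lr y_1)\prob(\tilde x_2\lr y_2)+\sum_z \prob(\tilde x_1\lr z)\prob(\tilde x_2\lr z)\prob(z\lr y_1)\prob(z\lr y_2),
\]
and then sum over $y_1\in x_1+Q_L$ and $y_2\in x_2+Q_L$ using the two-point function estimate (\ref{tpt}). For $x_1\ne x_2$, the first term contributes $(CL^2)^2=CL^4$, while the convolution $\sum_z|z-\tilde x_1|^{2-d}|z-\tilde x_2|^{2-d}\le C|\tilde x_1-\tilde x_2|^{4-d}\le C$ (valid for $d>6$ and $|\tilde x_1-\tilde x_2|\ge 1$) reduces the second term to $CL^4$ as well. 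For the diagonal case $x_1=x_2$, the single-origin argument of Lemma \ref{lem:xyz} gives the weaker bound $CL^6$.

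Combining, on the event $\mathcal{A}$ one has $|\mathcal{R}|=M$, so the off-diagonal sum contributes $CL^4M^2\prob(\mathcal{A})$ and the diagonal contributes $CL^6 M\prob(\mathcal{A})$; the hypothesis $M\ge L^2/2$ absorbs the diagonal into the off-diagonal since $ML^6\le 2M^2L^4$, yielding the claimed $\E[Y^2\mathbf{1}_\mathcal{A}]\le CM^2L^4\prob(\mathcal{A})$. The main difficulty I anticipate is justifying this factorization: the indicator $\mathbf{1}_{\mathcal{A}\cap\{x_1,x_2\in\mathcal{R}\}}$ is measurable with respect to $\mathcal{H}=\sigma(E(Q_j))$, whereas $|\mathcal{C}(\tilde x_i)\cap(x_i+Q_L)|$ depends on the full configuration, so pushing the four-point BK bound through the internal indicator requires either conditioning on $\mathcal{H}$ and checking the bound is uniform in the conditioning, or first decomposing $\mathcal{C}(\tilde x_i)$ into its external part (manifestly independent of $\mathcal{H}$) plus controlled internal-excursion contributions that can be absorbed using the $K$-regularity of $x_i$ on the event $\mathcal{R}$.
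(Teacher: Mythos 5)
Your high-level plan --- quadruple sum over $(x_1,y_1,x_2,y_2)$, tree-graph/BK bounds giving $L^4$ off the diagonal and $L^6$ on the diagonal, then absorbing the diagonal via $M\ge L^2/2$ --- does match the overall shape of the paper's proof. But the obstacle you flag in your last paragraph is not a technicality you can defer; it is precisely the content of the lemma, and neither of the two resolutions you sketch is what makes the proof go through.

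The problem with passing to $\mathbf{1}_{\{(x,y)\text{ adm.}\}}\le\mathbf{1}_{\{x\in\mathcal{R}\}}\mathbf{1}_{\{y\in\C(\tilde{x})\cap(x+Q_L)\}}$ is that it discards the pivotality of $(x,\tilde{x})$, and pivotality is exactly the mechanism that lets the indicator $\mathbf{1}_{\EE_1}$ factor away from the cluster-size terms. With $(x,\tilde{x})$ open, $\C(\tilde{x})$ contains $\C(x;Q_j)=\C(0;Q_j)$, so $|\C(\tilde{x})\cap(x+Q_L)|$ includes an $\mathcal{H}$-measurable part which, conditionally on $\mathcal{H}$, can be arbitrarily large compatibly with $\{X^{\textrm{reg}}=M,\,x\in\mathcal{R}\}$ ($K$-regularity controls this only in probability, not deterministically). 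So the ``uniform conditional bound'' route does not close, and the ``decompose into external plus controlled internal excursions'' route is not worked out and is not what the paper does.

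The paper keeps the pivotality and observes that if $(x,y)$ is admissible then $y$ lies in the cluster of $\tilde{x}$ in the configuration with $(x,\tilde{x})$ deleted, which is \emph{disjoint} from $\C(x;Q_j)$; and $\EE_1(x)$ is determined by the edges of $\C(x;Q_j)$ alone. Hence one has the disjoint occurrence $\{\EE_1(x)\}\circ\{x\lr y\}$. Splitting into the three cases $x_1=x_2,y_1=y_2$; $x_1=x_2,y_1\ne y_2$ (where a triple-point $z$ is introduced inside the post-pivotal cluster); and $x_1\ne x_2$ (where the two post-pivotal clusters are themselves disjoint), BK-Reimer gives directly
\[
\prob(\EE_1(x_1))\prob(x_1\lr y_1),\quad
\prob(\EE_1(x_1))\!\sum_z\prob(x_1\lr z)\prob(z\lr y_1)\prob(z\lr y_2),\quad
\prob(\EE_1(x_1)\cap\EE_1(x_2))\prob(x_1\lr y_1)\prob(x_2\lr y_2),
\]
and the indicator contributes exactly the factor $\sum_{x_1,x_2}\prob(\EE_1(x_1)\cap\EE_1(x_2))=M^2\prob(X^{\textrm{reg}}=M)$ (resp.\ $M\prob(\cdot)$ on the diagonal). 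Note in particular that for $x_1\ne x_2$ no convolution $\sum_z|z-\tilde{x}_1|^{2-d}|z-\tilde{x}_2|^{2-d}$ appears: because the two post-pivotal clusters are disjoint, the two arm events already occur disjointly, and the product of one-arm sums gives $L^4$ outright. Your tree-graph correction term is harmless but unnecessary. The structural point you need to add is the disjoint-occurrence derivation from pivotality; without it the factorization, and hence the lemma, is not established.

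A small remark: you invoke ``the hypothesis $M\ge L^2/2$'' to absorb the $ML^6$ diagonal term. That hypothesis does not appear in the statement of the lemma, although the paper's own proof ends ``since $M\ge L^2/2$'' and the lemma is only ever applied in that range (the sum in the proof of Theorem \ref{lowmass} runs over $M\ge L^2/2$). So you are consistent with the paper's implicit usage, but the omission in the lemma statement is the paper's, not something you should silently introduce.
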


We begin with proving Theorem \ref{lowmass} given the lemmas.
\medskip

\noindent{\bf Proof of Theorem \ref{lowmass}.}
Recall the definitions of $X_j$ and $A_j$ preceding the statement of Theorem \ref{lowmass} and denote $X=X_j$, $A=A_j$ etc\@. We begin with
\begin{align} \label{split}
\prob \big ( X \geq L^2 \and A \leq cL^4 \big ) &\leq
  \prob \big ( X \geq L^2 \and X^{\textrm{irr}} \geq L^2/2 \big)+\mbox{} \nonumber \\
&+ \sum_{M \geq L^2/2} \prob \big ( X^{\textrm{reg}} = M \and
A \leq cL^4 \big ) \, .
\end{align}
We will bound the first term using Theorem \ref{globalexploreshafan},
and each summand on the right hand side we bound using a second moment
argument with Lemmas \ref{firstmoment} and \ref{secondmoment}. We
first note that
\be\label{dominate}
A \geq Y \, .
\ee
Indeed, for each pair $(x,y)$
counted in $Y$ we have that $0 \lr y$ holds. Furthermore, we
required that for each pair $(x,y)$ counted in $Y$ the edge
$(x,\tilde{x})$ is pivotal for $0 \lr y$. This shows that $x$ must be
unique --- if both $x_1$ and $x_2$ satisfy this then
by the ``chain of sausages'' picture \cite[p. 91]{G},
one of them (say $x_2$) must be in the cluster
connected to zero only by the pivotal edge $(x_1,\tilde{x}_1)$ which
contradicts the requirement that $0 \stackrel{Q_j}{\lrlong} x_2$. This
shows (\ref{dominate}).

Recall the inequality (see \cite{D})
$$ \prob \big ( V > a ) \geq \frac{ (\E V - a)^2}{ \E V^2} \, ,$$
valid for any random variable $V \geq 0$ and $a < \E V$. We use this
for the variable $Y$ conditioned on $X^\textrm{reg}=M$ and for $a=cML^2$. Lemmas
\ref{firstmoment} and \ref{secondmoment} give that
\[
\prob(Y>cML^2\,|\,X^\textrm{reg}=M)>c\,,
\]
and the fact that $M\geq
L^2/2$ gives that there exists
positive constants $c_1, c_2$, depending on $K$, such
that
$$ \prob \Big ( Y \geq c_1 L^4 \, \mid \, X^{\textrm{reg}} = M \Big ) \geq c_2 \, .$$
We use this and the fact that $A\ge Y$ (\ref{dominate}) to derive that
$$ \prob \big ( X^{\textrm{reg}} = M \and A
\leq cL^4 \big ) \leq (1-c_2) \prob ( X^{\textrm{reg}} = M ) \,
.$$ Putting this back into (\ref{split}) and using Theorem \ref{globalexploreshafan} gives that
\begin{align*} \prob \big ( X \geq L^2 \and A \leq cL^4 \big ) &\le Cj^d e^{-c \log^2 L} + (1-c_2) \prob( X^{\textrm{reg}} \geq L^2/2)\\
 &\le Ce^{-c\log^2 j} + (1-c_2)\prob(0 \lr \partial Q_j) \, ,
\end{align*}
where we used the fact that $L\geq j^{1/10}$. The first term is
negligible (recall Lemma \ref{lower} and the fact that our theorem is
only supposed to hold for $j$ sufficiently large) and this concludes our proof.\qed \\

We proceed with the proofs of Lemmas \ref{firstmoment} and
\ref{secondmoment}. To this aim we define the following events. In
these definitions we always have $x \in \partial Q_j$ and $y \in
x+Q_L$ and $x'$ in the box $(x+Q_K)\setminus Q_{j+K/2}$, see Figure
\ref{cap:e1e2}.
\begin{figure}
\input{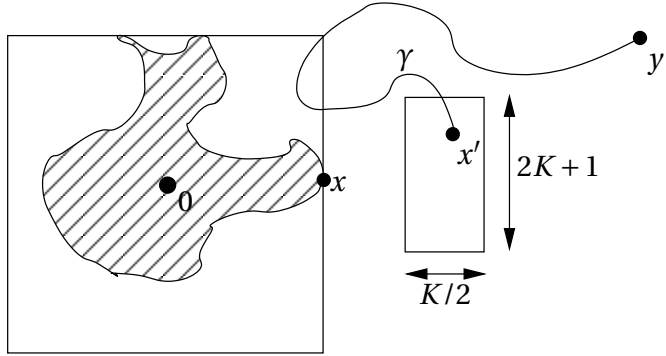}
\caption{\label{cap:e1e2} The event $\EE_1(x, M, K) \cap \EE_2(x, x', y)$. The filled area is $\C(x; Q_j)$, the box on the right is $(x+Q_K)\setminus Q_{j+K/2}$.}
\end{figure}
\begin{eqnarray*} \EE_1(x, M, K) &=& \Big \{ 0
\stackrel{Q_j}{\lrlong} x \, , x \textrm{ is } K\textrm{-regular} \and X_j^{\textrm{reg}} = M\Big \} \, , \\
\EE_2(x, x', y) &=& \Big \{ x' \lr y \off \C(x; Q_j) \Big \} \, ,\\
\EE_3 (x, x') &=& \Big \{ \C(x) \cap \C(x') = \emptyset \Big \} \, .
\end{eqnarray*}
In the following we sometimes abbreviate $\EE_1, \EE_2$ and $\EE_3$.

\begin{lemma} \label{sume1e2} There exists a constant $c>0$ such
that if $K>0$ is large enough then for any
$x\in \partial Q_j$ and any $x' \in (x+Q_K)\setminus Q_{j+K/2}$ we have that
$$ \sum _{y \in x + Q_L} \prob \big (\EE_1 \cap
\EE_2 \big ) \geq c L^2 \prob(\EE_1) \, .$$
\end{lemma}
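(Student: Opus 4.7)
The plan is to reduce to a halfspace percolation estimate via an independence argument. Observe that $\EE_1$ is measurable with respect to the $\sigma$-algebra generated by edges of $\Z^d$ with both endpoints in $Q_j$: the event $\{0 \stackrel{Q_j}{\lrlong} x\}$ is manifestly such, and the $K$-regularity of $x$ together with the count $X_j^{\textrm{reg}}=M$ depend only on the clusters $\C(z;Q_j)$ for $z\in\partial Q_j$, hence only on such edges. Introduce
\[
\EE_2^{\textrm{out}}(x',y):=\bigl\{x'\text{ is connected to }y\text{ by an open path whose vertices all lie in }\Z^d\setminus Q_j\bigr\}.
\]
Since $\C(x;Q_j)\subseteq Q_j$ we have $\EE_2^{\textrm{out}}\subseteq \EE_2$. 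Moreover, $\EE_2^{\textrm{out}}$ depends only on edges with both endpoints in $\Z^d\setminus Q_j$, a disjoint edge family from the one determining $\EE_1$. Therefore $\EE_1$ and $\EE_2^{\textrm{out}}$ are independent, so
\[
\prob(\EE_1\cap\EE_2)\geq \prob(\EE_1\cap \EE_2^{\textrm{out}})= \prob(\EE_1)\cdot \prob(x'\lr y\off Q_j),
\]
and it suffices to show $\sum_{y\in x+Q_L}\prob(x'\lr y\off Q_j)\geq cL^2$.

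Using the lattice's symmetries (invariance under coordinate permutations and reflections), assume without loss of generality that $(x)_d=j$ and $(x')_d\geq j+K/2$, and set $H:=\{z\in\Z^d:(z)_d>j\}$, so that $x'\in H$. Restrict to $\YY_L:=\{y\in x+Q_L:(y)_d\geq j+L/4\}$, a constant fraction of $x+Q_L$ consisting of points in $H$. For $y\in \YY_L$, any path from $x'$ to $y$ within $H$ automatically avoids $Q_j$, so $\prob(x'\lr y\off Q_j)\geq \prob(x'\lr y\text{ in }H)$. By (\ref{tpt}) one has $\sum_{y\in \YY_L}c|x'-y|^{2-d}\geq cL^2$ (analogously to Lemma \ref{lower}), so it remains to prove the halfspace lower bound $\prob(x'\lr y\text{ in }H)\geq \tfrac{c}{2}|x'-y|^{2-d}$ for $y\in \YY_L$.

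The main obstacle is this halfspace estimate. I would write
\[
\prob(x'\lr y\text{ in }H) \geq \prob(x'\lr y) - \prob(x'\lr y,\ \C(x')\cap \partial H \neq \emptyset),
\]
since $\C(x')\cap \partial H=\emptyset$ together with $x'\in H$ implies $\C(x')\subseteq H$, whence $x'\lr y$ is witnessed by a path in $H$. The first term is $\geq c|x'-y|^{2-d}$ by (\ref{tpt}). For the correction, when $\C(x')$ meets $\partial H$ let $z$ denote the first such vertex encountered in an exploration from $x'$; then $\{x'\lr z\text{ in }H\}\circ\{z\lr y\}$ holds, so by BK-Reimer and (\ref{tpt})
\[
\prob(x'\lr y,\ \C(x')\cap\partial H\neq\emptyset)\leq \sum_{z\in\partial H}\prob(x'\lr z)\prob(z\lr y) \leq C\sum_{z\in\partial H}|x'-z|^{2-d}|z-y|^{2-d}.
\]
This is a convolution on the $(d-1)$-dimensional hyperplane $\partial H$, which converges in $\R^{d-1}$ since $d-1>3$. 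A standard Green's-function computation yields the bound $C\bigl(\max(|x'_{\parallel}-y_{\parallel}|,\,(x')_d-j,\,(y)_d-j)\bigr)^{3-d}\leq CL^{3-d}$ for $y\in \YY_L$, using $(y)_d-j\geq L/4$. Since $L^{3-d}=L^{-1}\cdot L^{2-d}$ is smaller than the main term by a factor of $L^{-1}$, the correction is negligible for $L$ sufficiently large, which is ensured by the hypotheses of Theorem \ref{lowmass}, where $L\geq j^{1/10}$. Summing the resulting halfspace lower bound over $y\in \YY_L$ yields the required $cL^2$. A notable feature of this plan is that it does not directly use the $K$-regularity of $x$ appearing inside $\EE_1$; regularity enters only on the right-hand side via $\prob(\EE_1)$, and will presumably play its decisive role in the second moment estimate of Lemma \ref{secondmoment}.
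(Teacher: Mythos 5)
Your reduction to $\EE_2^{\textrm{out}}$ is sound as far as it goes: $\EE_1$ is indeed measurable with respect to the edges with both endpoints in $Q_j$, $\EE_2^{\textrm{out}}\subseteq\EE_2$, and the independence follows. But the quantity you reduce to, $\sum_{y\in x+Q_L}\prob(x'\lr y \off Q_j)$, is of order $KL$, not $L^2$, so the plan fails at the halfspace estimate. There are two problems. The lesser one is the decomposition: from $x'\lr y$ and $\C(x')\cap\partial H\neq\emptyset$ you cannot conclude $\{x'\lr z\}\circ\{z\lr y\}$ for the first boundary vertex $z$ of the cluster --- if the cluster is a simple open path $x'\to y\to z$ the two events must share the segment between $y$ and $z$. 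The correct tree--graph bound inserts a branch vertex $u$ and reads $\sum_u\sum_{z\in\partial H}\prob(x'\lr u)\prob(u\lr z)\prob(u\lr y)$, where $\sum_{z\in\partial H}|u-z|^{2-d}$ over a $(d-1)$-dimensional hyperplane diverges. (This can be patched: if no open path from $x'$ to $y$ stays in $H$, every open path crosses the boundary slab and splits there, recovering a two-factor bound.) The fatal problem is quantitative: the hyperplane convolution $\sum_{z\in\partial H}|x'-z|^{2-d}|z-y|^{2-d}$, with $x'$ at depth $O(K)$ and $y$ at depth $\approx L$, is of order $L^{3-d}$, and $L^{3-d}=L\cdot L^{2-d}$, not $L^{-1}\cdot L^{2-d}$ as you wrote. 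The ``correction'' exceeds the main term by a factor of $L$. This is not a bookkeeping artifact: since $x'$ sits within distance $\approx K$ of $\partial Q_j$, its cluster restricted to the halfspace behaves like a critical cluster conditioned to avoid a boundary it starts next to, and one expects $\prob(x'\lr y\off Q_j)\approx KL^{1-d}$ for $y$ at depth $\approx L$ (compare the halfspace Green function $\approx ab\,|u-v|^{-d}$), whence $\sum_y\prob(x'\lr y\off Q_j)\approx KL\ll L^2$. Forcing the connection to avoid all of $Q_j$ gives away a factor of $L/K$.

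This is precisely where the $K$-regularity of $x$, which you note your plan never uses, is indispensable rather than deferrable to the second moment. The paper conditions on $\C(x;Q_j)=A$ and asks the path only to avoid the random set $A$, writing $\prob(x'\lr y\off A)=\prob(x'\lr y)-\prob(x'\lr y\on A)$. In the ``on $A$'' event every open path from $x'$ to $y$ meets $A$, so the splitting vertex lies on the path itself and the two-factor bound $\sum_{z\in A}|x'-z|^{2-d}|z-y|^{2-d}$ is legitimate; then $K$-regularity forces $|A\cap(x'+Q_{2^t})|\lesssim 2^{4t}t^7$, giving $\sum_{z\in A}|z-x'|^{2-d}\le CK^{6-d}\log^7K$, which is beaten by the main term once $K$ is large. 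Avoiding the sparse set $A$ costs a constant; avoiding the solid cube $Q_j$ costs a factor of $L$.
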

\begin{proof} Note that $\EE_1$ can be determined by observing only the edges of $\C(x; Q_j)$. Hence we condition on $\C(x; Q_j)=A$ and get that
\begin{equation}\label{e1e2} \prob \big ( \EE_1 \cap
\EE_2 \big ) = \sum_{A \textrm{ admissible}} \prob \big (
\C(x; Q_j) = A )\cdot
\prob ( x' \lr y \off A \mid \C(x; Q_j) = A )
\, ,
\end{equation}
where by $A$ admissible, we mean $A$ in which $\EE_1$ holds and $\prob(\C(x; Q_j) = A)>0$. Since the event $\{x' \lr y \off A\}$ depends only on the
status of edges not touching $A$ we have that
$$ \prob ( x' \lr y \off
A \, \mid \, \C(x; Q_j) = A ) = \prob ( x' \lr y \off A) \, .$$
Continuing we write
\be\label{offon} \prob (x' \lr y \off A) = \prob (x' \lr
y) - \prob(x' \lr y \on A) \, .\ee
If $x' \lr y \on A$, then
there exists $z\in A$ such that $\{x' \lr z\} \circ \{z \lr y\}$.
This together with the $2$-point function estimate (\ref{tpt}) gives that
$$ \prob(x' \lr y \on A) \leq C \sum_{z \in A} |z-x'|^{2-d}
|z-y|^{2-d} \, .$$
We sum this over $y$ and get that
\be\label{midstep1} \sum_{y\in x+Q_L} \prob(x' \lr y \on A) \leq
CL^2 \sum_{z \in A} |z-x'|^{2-d}\, .\ee
We separate the sum dyadically over $z$ according to the scale of $z$'s distance
from $x'$ as follows. For a given $t\geq 0$ let
\[
A_t=A\cap\big(x'+\left(Q_{2^t}\setminus Q_{2^{t-1}}\right)\big)\,.
\]
With this notation we can write
$$ \sum_{z \in A} |z-x'|^{2-d} \leq C \sum_{t = \lceil\log (K/2)\rceil}^{\infty}
|A_t|2^{t(2-d)} \, ,$$ where we began the sum on $t$ from $\lceil\log (K/2)\rceil$
because if $z\in A$, then $z\in Q_j$ and hence $|z-x'|\geq K/2$ by our assumption on $x'$. By the same assumption, note that for any $s$ such that $s \geq K/2$ we have that
\be\label{trivbound} x'+Q_s \subset x+Q_{2s} \, .\ee
We now claim that
\be\label{eq:Atsmallsimp}
|A_t|<2^{4(t+1)}(t+1)^7\qquad\forall t\mbox{ such that }2^t\ge K/2\,.
\ee
Indeed, if $|A_t|\geq 2^{4(t+1)}(t+1)^7$ then directly from the definition of
$\Ss$ we have that $\prob( \Ss_{2^{t+1}}(x) \, |\, \C(x; Q_j)=A) = 0$ . This is what we termed in the discussion after Definition \ref{globalbadness} a ``simple''
bad configuration. However, $A$ is admissible whence $x$ is
$K$-regular, and we get a contradiction, hence (\ref{eq:Atsmallsimp}). Thus,
$$ \sum_{z \in A} |z-x'|^{2-d} \leq C \sum_{t = \lceil\log (K/2)\rceil}^\infty t^7 2^{t(6-d)} \leq C K^{6-d}\log^7 K \, .$$
We put this back into (\ref{midstep1}), and sum (\ref{offon}) over
$y$ using (\ref{tpt}). We get that
\begin{eqnarray*}
\lefteqn{\sum_{y\in x + Q_L} \prob (x' \lr y \off A) \geq}&&\\
&\qquad&\ge c\bigg(\sum _{y\in x+Q_L} |x'-y|^{2-d}\bigg) - C L^2 K^{6-d}\log^7 K
\ge \\
&&\ge L^2(c-CK^{6-d}\log^7 K)\, ,
\end{eqnarray*}
and so when $K$ is chosen large enough we have that
$$ \sum_{y\in x + Q_L} \prob (x' \lr y \off A) \geq c L^2 \, ,$$
and putting this back into (\ref{e1e2}) gives the assertion of the lemma.
\end{proof}

Our next step is the following easy estimate.

\begin{claim}\label{technical} Let $B \subset \Z^d$ be a set of vertices. Let $x'$ be a uniform random vertex chosen from a finite set $A$, then for any integer $s$ we have
$$ \E |(x'+Q_{s})\cap B| \leq \frac{|Q_s||B|}{|A|} \, .$$
\end{claim}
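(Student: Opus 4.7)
The plan is to prove this by a simple double counting / Fubini exchange. Expanding the expectation,
\[
\E|(x'+Q_s)\cap B| \;=\; \frac{1}{|A|}\sum_{x'\in A}\sum_{b\in B}\mathbf{1}\{b-x'\in Q_s\},
\]
and then I would swap the order of summation so that the inner sum is over $x'\in A$ with $b$ fixed.

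For each fixed $b\in B$, the inner sum counts the number of $x'\in A$ satisfying $b-x'\in Q_s$, equivalently $x'\in b-Q_s$. Since $|b-Q_s|=|Q_s|$ (translations preserve cardinality), the inner sum is at most $|Q_s|$ — no matter how $A$ is distributed, at most $|Q_s|$ of its points can lie in any given translate of $Q_s$. Summing over $b\in B$ gives
\[
\sum_{x'\in A}\sum_{b\in B}\mathbf{1}\{b-x'\in Q_s\} \;\leq\; |B|\cdot|Q_s|,
\]
and dividing by $|A|$ yields the claim. There is no real obstacle here; the only point that requires any thought is the symmetry/translation-invariance of $Q_s$, and this is immediate from its definition. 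Note also that the claim does not use any percolation — it is a purely combinatorial statement — which is consistent with its role (in what follows) as a bookkeeping device when we later take $A$ to be the set of pivotal vertices and $B$ a cluster.
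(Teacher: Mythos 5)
Your proof is correct and amounts to the same one-line double count as the paper's: both arguments bound $\sum_{x'\in A}\sum_{w\in Q_s}\mathbf{1}\{x'+w\in B\}$ by $|Q_s|\,|B|$, the only (cosmetic) difference being the order of summation --- the paper fixes $w\in Q_s$ and uses $\prob(x'+w\in B)\le |B|/|A|$, while you fix $b\in B$ and use $|A\cap(b-Q_s)|\le |Q_s|$.
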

\begin{proof}
Indeed, for any $w \in Q_{s}$ we have that $\prob(x'+w \in B) \leq |B||A|^{-1}$.
\end{proof}

\begin{lemma} \label{sume1e2e3} There exists a constant $c>0$ and $K>0$ large enough such that for any $x\in \partial Q_j$ there exists $x' \in (x+Q_K)\setminus Q_{j+K/2}$ with
$$ \sum _{y \in x + Q_L} \prob \big (\EE_1 \cap
\EE_2 \cap \EE_3 \big ) \geq c L^2 \prob(\EE_1) \, .$$
\end{lemma}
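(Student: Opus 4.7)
The plan is to use the identity
\[
\sum_y \prob(\EE_1 \cap \EE_2 \cap \EE_3) = \sum_y \prob(\EE_1 \cap \EE_2) - \sum_y \prob(\EE_1 \cap \EE_2 \cap \{x \lr x'\}),
\]
since $\neg\EE_3$ is exactly the event $\{x \lr x'\}$. Lemma \ref{sume1e2} bounds the first sum below by $c_1 L^2 \prob(\EE_1)$ for every $x' \in A := (x+Q_K) \setminus Q_{j+K/2}$, so the task reduces to producing a single $x' \in A$ at which the second sum is at most $\tfrac{c_1}{2} L^2 \prob(\EE_1)$. This $x'$ will come from an averaging argument: pick $x'$ uniformly in $A$, show the expected second sum is at most $\tfrac{c_1}{4} L^2 \prob(\EE_1)$, and select any $x'$ at or below the mean.

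For the averaged bound I would fix an admissible $B$ (one for which $\EE_1$ holds) and condition on $\C(x;Q_j)=B$. On this event $\EE_1$ is deterministic, and both $\EE_2$ and $\{x \lr x'\}$ depend only on the fresh edges outside $B$; since $x \in B$, the latter event is equivalent to $\{\exists u \in B : u \lr x' \text{ through fresh edges}\}$. Applying BK--Reimer to the two (almost) disjoint-witness events $\{B \lr x'\}$ and $\EE_2$, and separately absorbing the single possible shared edge at $x'$ via a BK estimate of the same form, yields
\[
\prob(\EE_2 \cap \{x \lr x'\} \mid \C(x;Q_j) = B) \le C \Big(\sum_{u \in B} |u-x'|^{2-d}\Big)\, |x'-y|^{2-d}.
\]
Summing over $y \in x + Q_L$ uses the standard estimate $\sum_{y \in x + Q_L} |x'-y|^{2-d} \le CL^2$, exactly as in the proof of Lemma \ref{sume1e2}.

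The final step is to average over $x' \in A$ and sum over $u \in B$. Using the dyadic bound $\frac{1}{|A|}\sum_{x' \in A}|u-x'|^{2-d} \le C\max(K, ||u-x||_\infty)^{2-d}$ together with the regularity estimate $|B \cap (x+Q_s)| \le s^4 \log^7 s$ for every $s \ge K$ (which holds for every admissible $B$ because $\EE_1$ forces $x$ to be $K$-regular, exactly as in the ``simple bad configuration'' discussion following Definition \ref{def:globalbad}) and a dyadic decomposition of $B$ into annuli around $x$, one obtains
\[
\frac{1}{|A|}\sum_{x' \in A}\sum_{u \in B}|u-x'|^{2-d} \le C K^{6-d}\log^7 K.
\]
The hypothesis $d > 6$ makes this vanish as $K$ grows, so choosing $K$ large enough forces the averaged second sum below $\tfrac{c_1}{4} L^2 \prob(\EE_1)$ and delivers the required $x'$. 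The main obstacle is the BK--Reimer step: a naive branching decomposition at an auxiliary vertex $z$ would only produce the weaker kernel $|u-x'|^{4-d}$, whose dyadic summation requires $d>8$; one must therefore handle the (otherwise shared) edge at $x'$ carefully enough to preserve the kernel $|u-x'|^{2-d}$ and make the argument work under the paper's hypothesis $d>6$.
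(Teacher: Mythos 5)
Your overall architecture (subtract the $\neg\EE_3$ term from Lemma \ref{sume1e2}, average over $x'$, use the deterministic bound $|B\cap(x+Q_s)|\le s^4\log^7 s$ forced by $K$-regularity, and win a factor $K^{6-d}$) matches the paper. But the central inequality
\[
\prob\bigl(\EE_2 \cap \{x \lr x'\} \mid \C(x;Q_j)=B\bigr) \le C\Bigl(\sum_{u\in B}|u-x'|^{2-d}\Bigr)|x'-y|^{2-d}
\]
is exactly the step you have not proved, and as you yourself observe, the honest BK--Reimer/tree-graph decomposition of $\{B\lr x'\}\cap\{x'\lr y\}$ branches at an auxiliary vertex $z$ on the path from $x'$ to $y$ (not at $x'$), giving the kernel $\sum_z|u-z|^{2-d}|z-x'|^{2-d}|z-y|^{2-d}$ and hence $|u-x'|^{4-d}$ after summation --- which fails for $6<d\le 8$. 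The obstruction is not ``a single possible shared edge at $x'$'': the witnessing paths for $u\lr x'$ and $x'\lr y$ can overlap along an arbitrarily long initial segment, and there is no local surgery at $x'$ that recovers the kernel $|u-x'|^{2-d}$. So the proposal reduces the lemma to precisely the claim it needed to prove, and the suggested fix does not address the actual difficulty.

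The paper escapes this differently, and the difference is worth internalizing. It applies Reimer with branch point $z$ being the vertex where the cluster of $0$ attaches to the path $\gamma$ from $x'$ to $y$, yielding $\{\EE_1\cap 0\lr z\}\circ\{x'\lr z\}\circ\{z\lr y\}$; crucially it does \emph{not} expand $\prob(\EE_1\cap 0\lr z)$ into a two-point sum over $u\in B$. After summing over $y$ one is left with $\sum_z\prob(\EE_1\cap 0\lr z)|z-x'|^{2-d}$, i.e.\ the \emph{cluster density of $\C(0)$} (the full cluster, including the part outside $Q_j$) against a single propagator. That density per dyadic annulus is $\lesssim 2^{4t}t^7$, giving $2^{t(6-d)}$ --- summable for $d>6$. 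But controlling it requires the \emph{conditional, probabilistic} regularity estimate $\E\bigl(|\C(0)\cap(x+Q_{2^{t+1}})|\,\big|\,\C(x;Q_j)=A\bigr)\lesssim 2^{4t}t^7$ from Definition \ref{def:globalbad} (together with Claim \ref{technical} and the randomization of $x'$ for the scales $2^t\le K$), not merely the deterministic bound on $B=\C(x;Q_j)$ that your argument uses. This is the ``full power of the definition of bad vertices'' the paper alludes to, and it is the ingredient missing from your proposal.
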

\noindent{\em Remark.} The statement in fact holds for any $x'\in (x+Q_K)\setminus Q_{j+K/2}$ but proving this takes an extra effort. We  only require one such $x'$ and choosing $x'$ at random simplifies the proof of this lemma significantly.
\begin{proof} We take $x'$ to be a uniform random vertex in $(x+K)+Q_{K/2}$ and prove that
$$\E_{x'} \Big [ \sum _{y \in x + Q_L} \prob \big (\EE_1 \cap
\EE_2 \cap \EE_3 \big ) \Big ] \geq c L^2 \prob(\EE_1) \, ,$$
and it follows that there exists $x'$ such that the assertion of the lemma holds.

\begin{figure}
\input{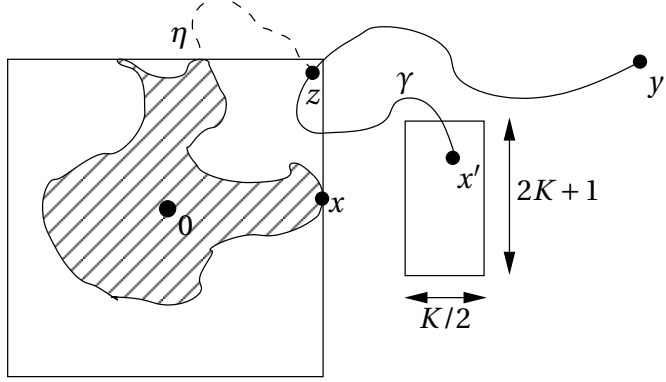}
\caption{\label{cap:e1e2note3} The event $\EE_1\cap \EE_2\cap \neg \EE_3$. The solid line between $x'$ and $y$ is the path $\gamma$ and the dashed line is the path $\eta$.}
\end{figure}

For any $x'\in \partial Q_j$, assume that
$\EE_1\cap \EE_2\cap \neg \EE_3$ occurs. We claim that in this case
there exists a vertex $z$ such that the event
\be\label{eq:BKR}
\{ \EE_1 \cap 0 \lr z \} \circ \{ x' \lr z \} \circ \{ z \lr y
\} \, ,
\ee occurs. Indeed, let $\gamma$ be an open path between
$x'$ and $y$ which avoids $\C(x; Q_j)$. Since we assume that $\neg
\EE_3$ occurs (that is, we assume $x \lr x'$) there must exists an open path
$\eta$ connecting a vertex on $\C(0;Q_j)$ to a vertex on
$\gamma$ such that, considered as sets of edges, $\eta \cap
(\C(0; Q_j) \cup \gamma) = \emptyset$. Denote by $z$ the end vertex
of $\eta$ ($z$ is a vertex on the path $\gamma$).
To verify (\ref{eq:BKR}) we check that the three events can be
verified with disjoint set of edges. Indeed, to verify $\EE_1
\cap \{0\lr z\}$ it suffices to observe the edges of $\C(x; Q_j)$
and $\eta$. Note that ``the edges of $\C(x;Q_j)$'' means all edges
needed to calculate $\C(x;Q_j)$ precisely, i.e.\ all open edges inside
the cluster and all closed edges defining its boundary in $Q_j$. To verify $\{ x' \lr z \}$ we observe the edges of
$\gamma$ up to $z$, and to verify $\{ z \lr y \}$ we observe the edges
of $\gamma$ from $z$ to $y$. See Figure \ref{cap:e1e2note3}. The BK-Reimer inequality
gives that
$$
\prob(\EE_1\cap \EE_2\cap \neg \EE_3) \leq \sum _{z} \prob(\EE_1 \cap 0 \lr z)
\prob(x' \lr z) \prob(z \lr y) \, .
$$
We sum over $y$ and use the $2$-point function estimate
(\ref{tpt}) to get that \be\label{midstep2} \sum_{y \in x + Q_L}
\prob(\EE_1\cap \EE_2\cap \neg \EE_3) \leq C L^2 \sum_{z} \prob(\EE_1 \cap
0 \lr z) |z-x'|^{2-d} \, .\ee



To sum over $z$, as in the previous lemma, we separate
the sum over $z$ according to the scale of the distance of $z$ from
$x'$ and condition on $\C(x; Q_j)$. Define
\[
B_t(x')=\C(0)\cap\Big(x'+\left(Q_{2^t}\setminus
Q_{2^{t-1}}\right)\Big)\,.
\]
We get that (\ref{midstep2}) is bounded above by
\begin{equation}\label{eq:mainsum}
CL^2 \prob(\EE_1) \sum_{t\geq 1} \sum_{A \textrm{ admissible}} \E\big(|B_t(x')|\cdot\mathbf{1}_{\C(x; Q_j) = A} \big)2^{t(2-d)}\, ,
\end{equation}
where again by $A$ admissible, we mean $A$ in which $\EE_1$ holds and
$\prob(\C(x; Q_j) = A)>0$. It is at this point that we finally use the
full power of our definition of bad vertices. Assume $K$ is a power of
two, and put $t_0 = \log (K/2)$. We first sum (\ref{eq:mainsum}) over
$t > t_0$. For such $t$, as in (\ref{trivbound}) we have that $|B_t(x')| \leq |\C(0)\cap (x+Q_{2^{t+1}})|$ for all $x'$ and we split the estimate according to whether $\Ss_{2^{t+1}}(x)$ occurs. If it does occur, then by definition of $\Ss_{2^{t+1}}(x)$ and the fact that $x\lr 0$ we have that $|\C(0)\cap (x+Q_{2^{t+1}})| \leq C 2^{4t} t^7$, whence
\be\label{eq:typical}
\E\big(|B_t(x')|\cdot \mathbf{1}_{\Ss_{2^{t+1}}(x)} \;\big|\; \C(x;
Q_j) = A \big) \leq C 2^{4t} t^7 \, .
\ee
On the other hand, since $x$ is $K$-regular it is not $2^{t+1}$-bad
for $t > t_0$ so by Definition \ref{globalbadness},
\begin{multline*}
\E\left(|B_t(x')|\mathbf{1}_{\neg\Ss_{2^{t+1}}(x)} \;\Big|\;
  \C(x; Q_j) = A \right) \leq \\
\leq |Q_{2^t}|\cdot\prob\left(\neg\Ss_{2^{t+1}}(x)\;\Big|\;
  \C(x; Q_j) = A \right) \le
C2^{td} e^{-t^2} \, .
\end{multline*}
This is negligible with respect to (\ref{eq:typical}) and we learn that
\be \label{larget} \E\left(|B_t(x')|\;\Big|\; \C(x; Q_j) = A \right) \leq C 2^{4t} t^7 \, ,\ee for $t > t_0$ and all $x'$.

Next we sum over $t \leq t_0$ and here is where we use the fact that
$x'$ is randomized. We perform a split similar to before, but consider
a box of size $2^{t_0+1}=K$ rather than $2^{t+1}$. Namely, if
$\Ss_K$ occurs then $|\C(0)\cap (x+Q_K)| \leq (2K)^4 \log^7 (2K)$ and by Claim \ref{technical} we have that for any $t \leq t_0$
$$ \E_{x'} |\C(0) \cap (x'+Q_{2^t})| \leq \frac{C 2^{td} K^4 \log^7 K}{K^d} \, ,$$
where $B$ from Claim \ref{technical} was taken to be $\C(0)\cap (x+Q_K)$. We deduce that
$$ \E_{x'} \E\left(|B_t(x')|\mathbf{1}_{\Ss_K(x)} \;\Big|\; \C(x; Q_j) = A \right) \leq \frac{C 2^{td} K^4 \log^7 K}{K^d} \, .$$
The case of $\neg\Ss$ is as before. Since $x$ is $K$-regular, for all
$t \leq t_0$ we have that
$$ \E\left(|B_t(x')|\mathbf{1}_{ \neg\Ss_K(x)} \;\Big|\;
\C(x; Q_j) = A \right) \leq C2^{td} e^{-t_0^2} \, ,
$$
which is again negligible, and we deduce that for any $t \leq t_0$,
$$ \E_{x'} \E\left(|B_t(x')| \;\Big|\; \C(x; Q_j) = A \right) \leq \frac{C 2^{td} K^4 \log^7 K}{K^d} \, .$$
We put this together with (\ref{larget}) and get that for any admissible $A$
\begin{align*} \E _{x'} \sum_{t\geq 1} \E\left(|B_t(x')|\;\Big|\;
  \C(x; Q_j) = A \right) 2^{t(2-d)} &\leq C K^{4-d} \log^7 K \sum_{t
  \leq t_0} 2^{2t} + C \sum _{t > t_0} 2^{t(6-d)} t^7 \\
&\leq C K^{6-d} \log ^7 K
\end{align*}
(recall that $t_0=\log(K/2)$). We put this into (\ref{eq:mainsum}) and that into (\ref{midstep2}) and conclude that
$$ \E_{x'} \sum_{y \in x + Q_L} \prob(\EE_1\cap \EE_2\cap \neg \EE_3) \leq CL^2 \prob(\EE_1) K^{6-d} \log^7 K \, .$$
We now apply Lemma \ref{sume1e2} and choose $K$ large enough and we are done.
\end{proof}

We are now ready to prove Lemma \ref{firstmoment} and
\ref{secondmoment}.
\medskip

\noindent {\bf Proof of Lemma \ref{firstmoment}.}
The lemma will follow directly from Lemma \ref{sume1e2e3} once we show that
\begin{multline}\label{eq:admisEEE}
\prob((x,y)\mbox{ are }(j,L,K)\mbox{-admissible and
}X_j^\textrm{reg}=M) \ge\\
\ge c(K)\prob\big (\EE_1(x,M,K)\cap\EE_2(x,x',y)\cap\EE_3(x,x')\big)
\end{multline}
for all $x$ and $y$ and $x'$ chosen according to Lemma \ref{sume1e2e3} --- summing (\ref{eq:admisEEE}) over $y$ gives the
$L^2$ factor, by Lemma \ref{sume1e2e3}, and the sum over $x$ obviously
gives a factor of $M$. So we only need to show (\ref{eq:admisEEE}).


To show (\ref{eq:admisEEE}) we use a local modification
argument as follows. Let $x$, $x'$ and $y$ satisfy $\EE_1\cap\EE_2\cap\EE_3$.
Write $\gamma$ for
the path connecting $x'$ to $y$ which avoids $\C(x)$. Consider
the edges in
\begin{figure}
\input{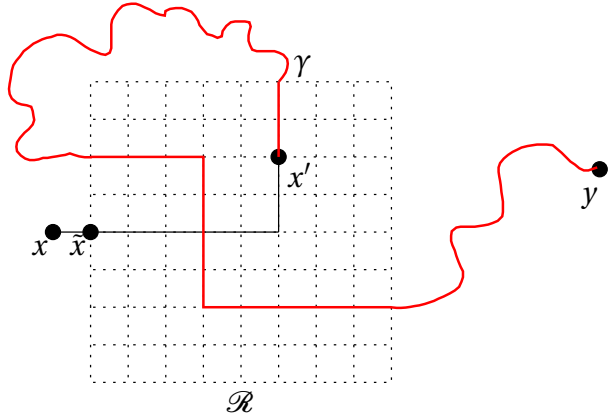}
\caption{\label{cap:rect} The local modification is performed in $\RR$. The thick red path connecting $x'$ to $y$ is $\gamma$.}
\end{figure}
\[\RR = (x+Q_K)\setminus Q_{j+1}\]
so that $x' \in \RR$, see Figure \ref{cap:rect}. Let us now apply the
following modification. Close all the edges in $\RR$ except edges
belonging to $\gamma$, and open the edges of an arbitrary path in
$\RR$ starting at $\tilde{x}$ (recall that $\tilde{x}$ is the neighbor
of $x$ outside $Q_j$) and ending at $x'$ (the black path in
Figure \ref{cap:rect}). Now open the edge $(x,\tilde{x})$.
In the new configuration, $(x,y)$ is $(j,L,K)$-admissible and $X_j^\textrm{reg}$
is still equal to $M$. Indeed, $X_j^\textrm{reg}$ depends only on what
happens inside $Q_j$ and we changed nothing there. For the same reason
$x$ remains $K$-regular. The fact that in the original configuration
$\C(x) \cap \C(x') = \emptyset$ ensures that the edge $(x,\tilde{x})$ is
pivotal for $0 \lr y$ in the modified configuration. Hence all
conditions for admissibility are satisfied.

Now, in this modification we changed the status of at most $(2K)^d$
edges, which means that (\ref{eq:admisEEE}) holds with
$c(K)=\left(\frac{1}{2}\min(p_c,1-p_c)\right)^{(2K)^d}$, and the lemma is proved.
\qed
\medskip

\noindent{\bf Proof of Lemma \ref{secondmoment}.}
If $(x_1,y_1)$
and $(x_2,y_2)$ are both $(j, K, L)$-admissible and
$\{X_j^{\textrm{reg}} = M \}$ holds, then one of following three
events occur:
\begin{enumerate}
\item $x_1=x_2$, $y_1=y_2$ and
$$ \{\EE_1(x_1)\} \circ \{x_1 \lr y_1 \} \, ,$$

\item $x_1=x_2$, $y_1\ne y_2$ but both in $x+Q_L$ and there exists some
$z$ such that
$$\{\EE_1(x_1)\} \circ \{x_1 \lr z \} \circ \{ z \lr y_1 \} \circ \{z \lr y_2 \} \, ,$$

\item $x_1 \neq x_2$, $y_1 \neq y_2$, $y_i \in x_i + Q_L$ and
$$ \{ \EE_1(x_1), \EE_1(x_2) \} \circ \{ x_1 \lr y_1 \} \circ \{x_2 \lr y_2 \} \, .$$
\end{enumerate}
To see this, first note that if $x_1=x_2$ and $y_1\neq y_2$, then one
may consider the cluster $\C$ of all vertices connected to $0$ only through
$(x_1,\tilde{x}_1)$. By the definition of admissibility it contains both
$y_1$ and $y_2$ and then one may define $z$ to be the triple point of
$\tilde{x}_1$, $y_1$ and $y_2$ in $\C$ in the usual way. Since
$\tilde{x}_1\not\in Q_j$ we see that $\C\cap \C(0;Q_j)=\emptyset$ and
hence the edges needed to define $\C(0;Q_j)$ --- which are enough to
prove that $\EE_1(x_1)$ occurred --- are disjoint from those defining
the three paths between $x_1$ and $z$, $z$ and $y_1$ and $z$ and
$y_2$. This shows (ii).

Assume now that $x_1 \neq x_2$, and define $\C_i$ to be the cluster of
vertices connected to $0$ only through $x_i$. Both $\C_1$ and $\C_2$
are non-empty because $y_i\in\C_i$ and they are disjoint, because if
$z\in\C_1\cap\C_2$ then taking a simple open path from $z$ to $0$ and examining
which of the edges $(x_i,\tilde{x}_i)$ it passes first, it is clear
that it does not need to pass through the other, contradicting the
definition of $\C_i$. Thus $\C_1\cap\C_2=\emptyset$ and we can choose
open paths demonstrating that $x_i\lr y_i$ which are both disjoint and
disjoint from $\C(0;Q_j)$. This shows (iii) and the whole trichotomy.

We get that
$$ \E Y^2 {\bf
1}_{\{X^{\textrm{reg}}=M \}} \leq S_1 + S_2 + S_3 \, ,$$ where
\begin{align*} S_1 &= \sum_{\substack{ x \in \partial Q_j  \\ y\in x + Q_L}} \prob(\EE_1(x))\prob(x \lr y) \, ,\\
 S_2 &= \sum_{\substack{ x \in \partial Q_j \\ y_1, y_2 \in x+Q_L}}
\prob (\EE_1(x)) \sum _{z} \prob(x \lr z) \prob(z \lr y_1) \prob(z
\lr y_2) \, ,\\
S_3 &= \sum _{\substack{x_1, x_2\in \partial Q_j \\ y_i \in x_i+Q_L}} \prob( \EE_1(x_1) \cap \EE_1(x_2) ) \prob(x_1 \lr y_1)
\prob(x_2 \lr y_2) \, .
\end{align*}
Using (\ref{tpt}) we easily estimate $S_1$
by
$$ S_1 \leq C L^2 \sum _{x \in \partial Q_j} \prob(\EE_1(x)) =
CML^2 \prob (X^{\textrm{reg}}=M) \, ,$$ where the last equality
follows by definition of $\EE_1$. To estimate $S_2$ we sum over $y_1$,
$y_2$ and $z$ as in Lemma \ref{lem:xyz} and get a term of $L^6$ so
$$ S_2 \leq C M L^6 \prob (X^{\textrm{reg}}=M) \, .$$
Finally we use the $2$-point estimate (\ref{tpt}) to estimate $S_3$
and get
\begin{align*} S_3
&\leq \sum_{x_1, x_2\in \partial Q_j} \prob( \EE_1(x) \cap \EE_1(x') ) \sum _{y_i \in x_i+Q_L} |x_1-y_1|^{2-d} |x_2-y_2|^{2-d}
\\ &\leq CL^4 \sum_{x_1, x_2\in \partial Q_j} \prob( \EE_1(x) \cap \EE_2(x') ) = C M^2 L^4 \prob (X^{\textrm{reg}}=M) \, .\end{align*}
We conclude that
$$ \E Y^2 {\bf 1}_{\{X^{\textrm{reg}}=M \}} \leq
C M^2 L^4\prob(X^\textrm{reg}=M) \, ,$$ since $M \geq L^2/2$.
\qed

\section{Multiple arms.}\label{sec:multiple} The upper bound of
$r^{-2\ell}$ follows immediately from the BK inequality and so the
main effort in this chapter is to prove the lower bound. To that aim
we require an ``inverse''-BK inequality. Our proof follows the
standard proof of the BK inequality. Roughly, it starts with
two identical copies of the graph, with one event on each copy, and
then merging edges, and showing that the probability decreases with
each merge. We will perform the same analysis on two copies of
$Q_{2r}$ but will only merge the edges of $Q_r$, and estimate how much
is lost in each merge operation.

We begin by describing the setting, using the notation of \cite{G}.
Let $m >0$ be an integer and let $\Omega$ be the set of all $0$-$1$ vectors of length $m$. Let $\prob$ be a product probability measure on $\Omega$ with density $p_i$ on the $i$-th coordinate, that is
$$ \prob(\omega) = \prod_{i=1}^m [\omega_i p_i + (1-\omega_i)(1-p_i)] \qquad \forall \,\, \omega \in \Omega \, .$$
Now, Let $(\Omega, \prob)$ and $(\widetilde{\Omega}, \widetilde{\prob})$ be two copies of $(\Omega, \prob)$ and write $(\Omega \times \widetilde{\Omega}, \prob_\otimes)$ for the product space where $\prob_\otimes = \prob \times \widetilde{\prob}$ is the product measure. Given two increasing events $\A$ and $\B$ in $\Omega$ write $\A'\subset \Omega \times \widetilde{\Omega}$ for
$$ \A' = \big \{ (\omega,\widetilde{\omega}) \in \Omega \times \widetilde{\Omega} : \omega \in \A \big \} \, ,$$
and $\B'_0\subset \Omega \times \widetilde{\Omega}$ for
$$ \B'_0 = \big \{ (\omega,\widetilde{\omega}) \in \Omega \times \widetilde{\Omega} : \widetilde{\omega} \in \B \} \, .$$
For each $k \in \{1, \ldots , m\}$ write $\B'_k \subset \Omega \times \widetilde{\Omega}$ for
$$ \B'_k = \big \{ (\omega,\widetilde{\omega}) \in \Omega \times
\widetilde{\Omega} : (\omega_1, \ldots, \omega_{k}, \widetilde{\omega}_{k+1}, \ldots,
\widetilde{\omega}_m) \in \B \big \} \, .$$
In words, $\B'_k$ is the event after merging the first $k$ edges.
Note that $\prob_{\otimes}(\A'\circ \B'_0)=\prob(\A)\prob(\B)$ and
that $\prob_{\otimes}(\A' \circ \B'_m) = \prob(\A \circ \B)$. The BK
inequality follows immediately once one shows that for any $k$ we have
$\prob_{\otimes}(\A'\circ \B'_{k}) \leq \prob_{\otimes}(\A'\circ
\B'_{k-1})$. The proof of this fact can be found in \cite{G}, but we
do not need this here. What we will need is
\be\label{inversebk} \prob_{\otimes} ( \A' \circ \B'_k ) = \prob_{\otimes}(\A' \circ \B'_0) - \sum_{i=1}^k \big [ \prob_{\otimes}(\A' \circ \B'_{i-1}) - \prob_{\otimes} ( \A' \circ \B'_{i}) \big ] \, .\ee

In our setting, let $m$ be the number of edges which have at least one end in $Q_{2r}$ and take $(\Omega, \prob)$ to be the usual Bernoulli percolation measure on these edges. Again, let $(\Omega, \prob)$ and $(\widetilde{\Omega}, \widetilde{\prob})$ be two copies of $(\Omega, \prob)$ and $(\Omega \times \widetilde{\Omega}, \prob_{\otimes})$ to be the product measure. Let $e_1, \ldots, e_m$ and $\widetilde{e}_1, \ldots, \widetilde{e}_m$ be the edges corresponding to $\omega_1, \ldots, \omega_m$ and $\widetilde{\omega}_1, \ldots, \widetilde{\omega}_m$, respectively. We assume that that the edges are ordered in such a way that there exists a number $k<m$ such that all the edges $e_1,\ldots,e_k$ and $\widetilde{e}_1, \ldots, \widetilde{e}_k$ have at least one end vertices in $Q_{r}$ and the rest of the edges have both endpoints not in $Q_r$.

Given $y_1, \ldots, y_\ell \in \Z^d$ as in the statement of Theorem \ref{multiplearms}, with constant $K$ to be chosen later, we define the events $\A$ and $\B$ by
\begin{eqnarray*} \A &=& \{y_1 \lr \partial Q_{2r}\} \circ \cdots \circ \{y_{\ell-1} \lr \partial Q_{2r}\} \, , \\
\B &=& \{y_\ell \lr \partial Q_{2r}\}\, . \end{eqnarray*}

\begin{lemma} \label{pivotalmerge} Assume the setting of Theorem \ref{multiplearms}. Let $i \leq k$ and write $e_i=(z,z')$ for the corresponding edge in $Q_r$. Then
$$ \prob  ( \A' \circ \B'_{i-1}) - \prob( \A' \circ \B'_{i} \big ) \leq C r^{-2\ell} \sum_{j=1}^{\ell-1}\prob(y_j \lr z) \prob(y_\ell \lr z) \, ,$$
where $C>0$ is a constant that depends on $\ell, d$ and the lattice chosen.
\end{lemma}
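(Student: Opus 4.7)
The plan is to identify which configurations in the product space lie in the difference $(\A'\circ\B'_{i-1})\setminus(\A'\circ\B'_i)$, and then bound their probability by a BK-style expression that encodes the fact that the edge $e_i$ is forced to play a dual role on both sides of the coupling.

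First I would characterize the difference. If $(\omega,\widetilde\omega)\in\A'\circ\B'_{i-1}$ is witnessed by disjoint index sets $F\subset\{1,\dots,m\}$ (for $\A'$, reading $\omega$) and $G\subset\{1,\dots,i-1\}\cup\{m+i,\dots,2m\}$ (for $\B'_{i-1}$, reading the mixed configuration), then the natural attempt at a witness pair for $\A'\circ\B'_i$ is $(F,G')$ with $G'=(G\setminus\{m+i\})\cup\{i\}$. If $m+i\notin G$ this is automatic, and if $m+i\in G$ but $\omega_i=1$ and $i\notin F$, then $G'$ is still a valid disjoint witness for $\B'_i$. Hence configurations in the difference must have every admissible witness pair satisfy $m+i\in G$, and either $\omega_i=0$ (so no swap is possible) or $i\in F$ (so the swap collides with the $\A'$-witness). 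In either case, the edge $e_i=(z,z')$ is used essentially both by the $\omega$-side witness for $\A$ and by the $\widetilde\omega$-side witness for $\B$.

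Next, since $\A=\bigcirc_{j<\ell}\{y_j\lr\partial Q_{2r}\}$ is itself a disjoint occurrence, the witness $F$ decomposes into disjoint witnesses $F_1,\dots,F_{\ell-1}$ for the single arms, and $e_i$ lies in some $F_j$. Using the independence of $\omega$ and $\widetilde\omega$, summing over the choice of $j$, and applying the BK--Reimer inequality separately on each side, I would obtain
\[
D_i\le\sum_{j=1}^{\ell-1}\prob\Bigl(\{y_j\lr z\}\circ\{z'\lr\partial Q_{2r}\}\circ\!\!\!\bigcirc_{\substack{j'\le\ell-1\\ j'\ne j}}\!\!\!\{y_{j'}\lr\partial Q_{2r}\}\Bigr)\cdot\prob\bigl(\{y_\ell\lr z\}\circ\{z'\lr\partial Q_{2r}\}\bigr),
\]
possibly with the roles of $z$ and $z'$ swapped, which is absorbed in the constant since $|z-z'|\le L$. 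Using BK once more on each factor and the already-established Theorem~\ref{mainthm} to estimate $\prob(z'\lr\partial Q_{2r})\le Cr^{-2}$ (valid because $z'\in Q_r$, hence at distance $\ge r$ from $\partial Q_{2r}$) and $\prob(y_{j'}\lr\partial Q_{2r})\le Cr^{-2}$ for each of the $\ell-2$ remaining arms, the first factor is at most $C\prob(y_j\lr z)r^{-2(\ell-1)}$ and the second at most $C\prob(y_\ell\lr z)r^{-2}$. Multiplying and summing yields precisely $Cr^{-2\ell}\sum_j\prob(y_j\lr z)\prob(y_\ell\lr z)$.

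The main obstacle is the bookkeeping in the first step: making rigorous the claim that the difference $(\A'\circ\B'_{i-1})\setminus(\A'\circ\B'_i)$ is contained in the event that some pair of disjoint witnesses exists in which $e_i$ is used simultaneously on both the $\omega$- and $\widetilde\omega$-sides, and then decomposing the $\omega$-side witness into single-arm pieces so that BK can be applied coordinate by coordinate. A secondary subtlety is the minor asymmetry between the endpoints $z$ and $z'$ of $e_i$ (the arm from $y_j$ may arrive at either end), which is handled by allowing either orientation and absorbing the resulting factor into the lattice-dependent constant.
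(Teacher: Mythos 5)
There is a genuine gap at the very first step, in your treatment of the case $\omega_i=0$. You correctly split the set difference $(\A'\circ\B'_{i-1})\setminus(\A'\circ\B'_i)$ into configurations where (a) $\omega_i=0$ and (b) $\omega_i=1$ with $i\in F$ for every witness pair, but your claim that ``in either case the edge $e_i$ is used essentially by both the $\omega$-side witness for $\A$ and the $\widetilde\omega$-side witness for $\B$'' is false in case (a): since $\A$ is increasing, its witness is a set of open edges, and in case (a) the edge $e_i$ is \emph{closed} in $\omega$, so no $\A'$-witness can use it. Consequently your BK bound, which extracts the factor $\prob(y_j\lr z)$ from the $\omega$-side, does not apply to case (a). This is not a repairable bookkeeping issue: for the $\omega_i=0$ configurations the best one can extract is roughly $Cr^{-2\ell}\prob(y_\ell\lr z)$ with no $\prob(y_j\lr z)$ factor, and in the application (summing over all edges $e_i$ of $Q_r$, i.e.\ over $z$) this contributes $\sum_{z\in Q_r}\prob(y_\ell\lr z)\approx r^2$ times $r^{-2\ell}$, which swamps $\prob(\A)\prob(\B)\approx r^{-2\ell}$. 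So the lemma cannot be proved by bounding the probability of the set difference alone.

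The missing idea is that the lemma bounds a \emph{difference of probabilities}, $\prob(\A'\circ\B'_{i-1})-\prob(\A'\circ\B'_i)=\prob(\A'\circ\B'_{i-1}\setminus\A'\circ\B'_i)-\prob(\A'\circ\B'_i\setminus\A'\circ\B'_{i-1})$, and the second (subtracted) term must be used to cancel case (a). The paper does this by exhibiting a measure-preserving bijection $\varphi$ (swapping the values of $\omega_i$ and $\widetilde\omega_i$) from $\mathcal D=\{(\omega,\widetilde\omega)\in\A'\circ\B'_{i-1}\setminus\A'\circ\B'_i:\widetilde\omega_i=1,\omega_i=0\}$ onto a subset $\mathcal D'$ of $\A'\circ\B'_i\setminus\A'\circ\B'_{i-1}$, so that $\prob(\mathcal D)$ cancels and only the configurations with $\omega_i=\widetilde\omega_i=1$ remain to be estimated. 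Your handling of that remaining case --- decomposing the $\A$-witness into single-arm pieces, locating $e_i$ in the $j$-th arm, applying BK on each side and invoking Theorem~\ref{mainthm} for the factors $\prob(z'\lr\partial Q_{2r})$ and the $\ell-2$ untouched arms --- is essentially the paper's argument and would complete the proof once the swap step is inserted.
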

\begin{proof} If $(\omega,\widetilde{\omega}) \in \A' \circ \B'_{i-1} \setminus \A' \circ \B'_{i}$ then we must have that $\widetilde{\omega}_i=1$. Consider the sets
$$ \mathcal{D} = \big \{ (\omega,\widetilde{\omega}) \in \A' \circ \B'_{i-1} \setminus \A' \circ \B'_{i} \, : \, \widetilde{\omega}_i=1, \omega_i=0 \big \} \, ,$$
and
$$ \mathcal{D}' = \big \{ (\omega,\widetilde{\omega}) \in \A' \circ \B'_{i} \setminus \A' \circ \B'_{i-1} \, : \, \widetilde{\omega}_i=0, \omega_i=1 \big \} \, .$$
A moment's reflection shows that the map $\varphi$ which exchanges the
values of $\omega_i$ and $\widetilde{\omega}_i$ is a one-to-one measure preserving map from
$\mathcal{D}$ onto $\mathcal{D}'$ --- indeed, both are
characterized by the condition that any choice of the two sets $U$ and
$V$ in the definition of $\circ$ satisfies $\widetilde{e}_i\in V$ or
$e_i\in V$, respectively. We deduce that
$$ \prob  ( \A' \circ \B'_{i-1}) - \prob( \A' \circ \B'_{i} \big )
\leq \prob \Big ( \big \{ (\omega,\widetilde{\omega}) \in
      \A' \circ \B'_{i-1}\setminus \A'\circ\B'_i \, : \,
  \widetilde{\omega}_i=1, \omega_i=1 \big \} \Big ) \, .
$$
If $\A'\circ \B'_{i-1}\setminus\A'\circ\B'_i$ occurs and $\widetilde{\omega}_i=\omega_i=1$, then $\A'$ must use $e_i$ and $\B'_{i-1}$ must use $\widetilde{e}_i$. This implies that for some $j \in \{1, \ldots, \ell-1\}$ we have that the events
\begin{itemize}
\item $\{y_n \lr \partial Q_{2r}\}$ for all $n \in \{1, \ldots, \ell-1\} \setminus \{j\}$ using the edges $e_1,\ldots, e_m$,
\item $\{y_j \lr z\} \cup \{y_j \lr z'\}$ using the edges $e_1,\ldots, e_m$,
\item $\{y_\ell \lr z\} \cup \{y_\ell \lr z'\}$ using the edges $e_1, \ldots, e_{i-1}, \widetilde{e}_{i}, \ldots \widetilde{e}_{i+1},  \ldots, \widetilde{e}_m$,
\item $\{z \lr \partial Q_{2r}\} \cup \{z' \lr \partial Q_{2r}\}$ using the edges $e_1,\ldots, e_m$,
\item $\{z' \lr \partial Q_{2r}\}\cup \{z' \lr \partial Q_{2r}\}$ using the edges $e_1, \ldots, e_{i-1}, \widetilde{e}_{i}, \ldots \widetilde{e}_{i+1},  \ldots, \widetilde{e}_m$,
\end{itemize}
occur disjointly. By the BK inequality we get that
\begin{align*} \lefteqn{\prob \big ( \A' \circ \B'_{i-1} \setminus \A' \circ \B'_{i} \big ) \leq}&&& \\ &&& 16 \sum _{j \leq \ell-1} \Big [ \prod _{\substack{n \leq \ell-1 \\ n \neq j}}\prob(y_n \lr \partial Q_{2r}) \Big ] \prob(y_j \lr z) \prob(y_\ell \lr z) \prob(z \lr \partial Q_{2r})^2 \, . \end{align*}
We now use Theorem \ref{mainthm} to conclude the proof of the lemma.
\end{proof}

We are now ready to prove Theorem \ref{multiplearms}.

\begin{proof}[Proof of Theorem \ref{multiplearms}] We prove the claim
  by induction on $\ell$. The case $\ell=1$ is precisely Theorem
  \ref{mainthm} so we may assume $\ell\geq 2$. Recall the definition
  on $k$ and the events $\A$ and $\B$ from above. By Lemma
  \ref{pivotalmerge}, (\ref{inversebk}) and the two-point function
  estimate (\ref{tpt}) we get that
$$ \prob_{\otimes} (\A' \circ \B'_k) \geq \prob(\A)\prob(\B) - C r^{-2\ell} \sum_{j=1}^{\ell-1}\sum_{z} |y_j -z |^{2-d} |y_\ell-z|^{2-d} \, .$$
We sum this over $z$ and use the induction hypothesis to get that
$$ \prob_{\otimes} (\A' \circ \B'_k) \geq cr^{-2\ell} - C r^{-2\ell}
\ell K^{4-d} \, .$$
Hence if we choose $K$ large enough (depending on $\ell$) we get that $\prob_{\otimes} (\A' \circ \B'_k) \geq c r^{-2\ell}$. Now, since the edges $\{e_i\}_{i\leq k}$ are all the edges which have an endpoint in $Q_r$ the event $\A' \circ \B'_k$ implies that for all $j \leq \ell$ the events $y_j \lr \partial Q_r$ occur disjointly using the edges $e_1, \ldots, e_k$. This concludes our proof.
\end{proof}


\vspace{.05 in}\noindent
{\bf Gady Kozma}: \texttt{gady.kozma(at)weizmann.ac.il} \\
The Weizmann Institute of Science, \\
Rehovot POB 76100, \\
Israel.

\medskip \noindent
{\bf Asaf Nachmias}: \texttt{asafn(at)microsoft.com} \\
Microsoft Research,
One Microsoft way,\\
Redmond, WA 98052-6399, USA.

\end{document}